\newtheorem{theorem}{Theorem}
\newtheorem*{theoremY*}{Theorem Y}
\newtheorem*{theoremAB*}{Theorem AB}
\newtheorem*{linearformsmtp*}{Mass transference principle for linear forms}
\newtheorem*{corollary*}{Corollary}
\newtheorem{proposition}{Proposition}
\newtheorem{lemma}{Lemma}
\newtheorem*{claim*}{Claim}
\newtheorem{conjecture}{Conjecture}
\theoremstyle{definition}
\newtheorem{definition}{Definition}
\theoremstyle{remark}
\newtheorem*{remark*}{Remark}
\renewcommand{\Bbb}[1]{\mathbb{#1}}
\newcommand{\N}{{\Bbb N}}         
\newcommand{\I}{{\Bbb I}}
\newcommand{\R}{{\Bbb R}}        
\newcommand{\Z}{{\Bbb Z}}         
\newcommand{\cA}{{\cal A}}
\newcommand{\cH}{{\cal H}}
\newcommand{\x}{\mathbf{x}}
\newcommand{\y}{\mathbf{y}}
\newcommand{\p}{\mathbf{p}}
\newcommand{\q}{\mathbf{q}}
\newcommand{\br}{\mathbf{r}}
\newcommand{\0}{\mathbf{0}}
\DeclareMathOperator{\dimh}{\dim_H}
\newcommand{\NN}{\mathbb{N}}
\newcommand{\RR}{\mathbb{R}}
\newcommand{\ZZ}{\mathbb{Z}}
\newcommand{\calA}{\mathcal A}
\newcommand{\calH}{\mathcal H}
\newcommand{\bx}{\mathbf x}
\newcommand{\by}{\mathbf y}
\newcommand{\bz}{\mathbf z}
\DeclarePairedDelimiter{\abs}{\lvert}{\rvert}
\DeclarePairedDelimiter{\set}{\lbrace}{\rbrace}
\DeclarePairedDelimiter{\parens}{\lparen}{\rparen}
\DeclarePairedDelimiter{\brackets}{\lbrack}{\rbrack}
\DeclareMathOperator{\sgn}{sgn}
\def\eps{{\varepsilon}}
\title{Independence inheritance and Diophantine
  approximation for systems of linear forms}
\author{Demi Allen\footnote{research for this project supported by the Heilbronn Institute for
    Mathematical Research} \\ University of Warwick \and Felipe
  A.~Ram\'{\i}rez \\ Wesleyan University}
\date{\footnotesize{\it Dedicated to Victor Beresnevich and Sanju
    Velani}}
\begin{document}
\frenchspacing
\maketitle
\begin{abstract}

  The classical Khintchine--Groshev theorem is a generalization of
  Khintchine's theorem on simultaneous Diophantine approximation, from
  approximation of points in $\RR^m$ to approximation of systems of
  linear forms in $\RR^{nm}$. In this paper, we present an
  inhomogeneous version of the Khintchine--Groshev theorem which does
  not carry a monotonicity assumption when $nm>2$.  Our results bring
  the inhomogeneous theory almost in line with the homogeneous theory,
  where it is known by a result of Beresnevich and Velani (2010) that
  monotonicity is not required when $nm>1$. That result resolved a
  conjecture of Beresnevich, Bernik, Dodson, and Velani (2009), and
  our work resolves almost every case of the natural inhomogeneous
  generalization of that conjecture. Regarding the two cases where
  $nm=2$, we are able to remove monotonicity by assuming extra
  divergence of a measure sum, akin to a linear forms version of the
  Duffin--Schaeffer conjecture. When $nm=1$ it is known by work of
  Duffin and Schaeffer (1941) that the monotonicity assumption cannot
  be dropped.
  
  The key new result is an \emph{independence inheritance phenomenon};
  the underlying idea is that the sets involved in the
  $((n+k)\times m)$-dimensional Khintchine--Groshev theorem
  ($k\geq 0$) are always $k$-levels more probabilistically independent
  than the sets involved the $(n\times m)$-dimensional theorem. Hence,
  it is shown that Khintchine's theorem itself underpins the
  Khintchine--Groshev theory.
\end{abstract}

\newpage
{\footnotesize
\tableofcontents
}

\section{Introduction}

\subsection{The Lebesgue measure theory}

For a sequence of balls $\Psi :=(B_q)_{q \in \N}\subset\RR^m/\ZZ^m$, and
$n\geq 1$, let $\calA_{n,m}(\Psi)$ denote the set of $\x \in \I^{nm}$
such that
\begin{align*}
\q\x+\p \in B_{\abs{\q}}
\end{align*}
holds for infinitely many pairs
$(\p,\q) \in \Z^{m} \times \Z^{n}$,
where $\abs{\q}:=\max_{1 \leq i \leq n}|q_i|$ is the maximum norm. If
all the balls have a common center $\y$, and their radius is given by
a function $\psi(\abs{\q})$, then we write $\calA_{n,m}^\y (\psi)$
instead of $\calA_{n,m}(\Psi)$. Throughout we write
$\I := [0,1] = \RR/\ZZ$ and, where not explicitly specified, $n$ and
$m$ will always be integers such that $n,m \geq 1$. Whenever we refer
to balls, we will mean balls with respect to the maximum norm.

Many seminal results in Diophantine approximation have had to do with
the sets $\calA_{n,m}^\y(\psi)$, in particular their metric
properties, typically meaning their Lebesgue measure
$\abs*{\calA_{n,m}^\y(\psi)}$ and their Hausdorff measures
$\calH^f\parens*{\calA_{n,m}^\y(\psi)}$ . The classical Lebesgue
theory for these sets is summarized by the following statement.
\theoremstyle{plain} \newtheorem*{ikg}{(Inhomogeneous)
  Khintchine--Groshev Theorem}
\begin{ikg}
  Let $n,m\geq 1$. Then for any $\psi:\NN\to \RR_{\geq 0}$ and
  $\y\in \RR^m$, we have
  \begin{equation}\label{eq:ikg}
    \abs*{\calA_{n,m}^\y(\psi)} =
    \begin{cases}
      0 &\textrm{if } \sum_{q=1}^\infty q^{n-1}\psi(q)^m < \infty \;, \\[2ex]
      1 &\textrm{if } \sum_{q=1}^\infty q^{n-1}\psi(q)^m =\infty \quad\dots\quad \textrm{ \textbf{and $\psi$ is monotonic.}}
    \end{cases}
  \end{equation}
\end{ikg}
\begin{remark*}[On terminology]
  The term ``homogeneous approximation'' refers to the case $\y=\0$,
  and ``inhomogeneous approximation'' refers to the general
  case. ``Simultaneous approximation'' refers to cases where $n=1$ and
  $m>1$. ``Dual approximation'' refers to cases where $n>1$ and
  $m=1$. ``Approximation of linear forms'' is the general case.
\end{remark*}

\begin{remark*}[On attribution]
  When $(n,m) = (1,m)$ the above theorem is known as Khintchine's
  theorem~\cite[1924/1926]{Khintchineonedimensional,Khintchine}.  When
  $(n,m)$ is general, it is known as the Khintchine--Groshev
  theorem~\cite[1938]{Groshev}. An inhomogeneous version of
  Khintchine's theorem was proved by Sz{\"u}sz~\cite[1958]{SzuszinhomKT} in
  dimension $m=1$ and then Schmidt~\cite[1964]{Schmidt} in higher
  dimensions.  An inhomogeneous version of the Khintchine--Groshev
  theorem is found as~\cite[Theorem~12/15 in Chapter~1]{Sprindzuk} in Sprind\v{z}uk's book
  and it only requires monotonicity for $n=1, 2$.
\end{remark*}
 
The necessity of the emphasized monotonicity assumption
in~(\ref{eq:ikg}) has been one of the motivating questions of modern
research in Diophantine approximation, and there have been great
strides in the \emph{homogeneous} setting. Duffin and Schaeffer
constructed a counterexample showing that monotonicity cannot be
removed when $(n,m)=(1,1)$, and this gave birth to the
Duffin--Schaeffer conjecture~\cite[1941]{duffinschaeffer}, a problem
which invigorated the field for years until its eventual proof by
Koukoulopoulos and Maynard~\cite[2020]{KMDS}. Gallagher removed
monotonicity for $(1,m)$ when $m\geq 2$~\cite[1965]{Gallagherkt}. As
mentioned above, Sprind\v{z}uk proved Khintchine--Groshev without
monotonicity for $(n,m)$ when $n\geq 3$. Beresnevich and Velani
completed the homogeneous story by removing monotonicity when
$nm>1$~\cite[2010]{BVKG}, thus settling affirmatively a
  conjecture posed by Beresnevich, Bernik, Dodson, and Velani
  \cite[Conjecture A]{BBDV}. The work of Beresnevich and
  Velani shows that monotonicity can safely be removed in all
homogeneous cases except $(n,m)=(1,1)$, where the Duffin--Schaeffer
counterexample had already established that monotonicity could not be
removed.

The problem of removing monotonicity in the more general
\emph{inhomogeneous} part of the theory has lagged somewhat, with no
further progress having been recorded in the inhomogeneous
higher-dimensional linear forms setting since Sprind\v{z}uk proved the
general theorem without monotonicity for $(n,m)$ where $n\geq
3$. Aside from the earlier work of Sprind\v{z}uk, Yu recently removed
monotonicity from the general \emph{simultaneous} inhomogeneous
theorem when $n=1$ and $m\geq 3$~\cite[2021]{Yu}. Since homogeneous
approximation is a special case of inhomogeneous approximation,
corresponding to $\y = \0$, the Duffin--Schaeffer counterexample
already demonstrates that monotonicity cannot be removed in the
$(n,m)=(1,1)$ case. In fact, the second author showed that for
$(n,m)=(1,1)$ there is no inhomogeneous shift parameter $y \in \R$ for
which monotonicity can be removed~\cite[2017]{ds_counterex}.

This paper's main contribution to the inhomogeneous theory is to
remove monotonicity from the general Khintchine--Groshev theorem
whenever $nm>2$. This resolves the natural inhomogeneous
generalization of \cite[Conjecture A]{BBDV} when $nm>2$, leaving open
only the cases $(n,m) = (2,1)$ and $(n,m) = (1,2)$. We formally state
the two remaining cases of this inhomogeneous conjecture as
Conjecture~\ref{conj:nm=2} below, and we make partial progress towards
it in Theorem~\ref{thm:remaining}.

\begin{theorem}\label{thm:marquee}
  Let $nm>2$.
  Then for any
  $\psi:\NN\to \RR_{\geq 0}$ and $\y\in \RR^m$, we have
  \begin{equation*}
    \abs*{\calA_{n,m}^\y(\psi)} =
    \begin{cases}
      0 &\textrm{if } \sum_{q=1}^\infty q^{n-1}\psi(q)^m < \infty\;, \\[2ex]
      1 &\textrm{if } \sum_{q=1}^\infty q^{n-1}\psi(q)^m =\infty\;.
    \end{cases}
  \end{equation*}
\end{theorem}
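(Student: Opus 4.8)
Since the convergence half of~(\ref{eq:ikg}) needs no monotonicity, the entire content of Theorem~\ref{thm:marquee} is the divergence half, so from now on assume $\sum_q q^{n-1}\psi(q)^m=\infty$. One may also assume $\psi\le 1/2$ (else $\calA_{n,m}^\y(\psi)=\I^{nm}$ trivially), and the regime where $q^{n-1}\psi(q)^m$ fails to be small for infinitely many $q$ should be isolated and handled on its own, since there the quasi-independence estimates below degenerate. The plan then has three moving parts. First, reduce the claim $\abs{\calA_{n,m}^\y(\psi)}=1$ to the claim $\abs{\calA_{n,m}^\y(\psi)}>0$ by means of an inhomogeneous zero--one law (established separately, in the spirit of Cassels' homogeneous result). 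Second, write $\calA_{n,m}^\y(\psi)=\limsup_{q\to\infty}R_q$, where $R_q:=\bigcup_{\abs\q=q}\{\x\in\I^{nm}:\q\x+\p\in B(\y,\psi(q))\text{ for some }\p\in\ZZ^m\}$, and note $\sum_q\abs{R_q}\asymp\sum_q q^{n-1}\psi(q)^m=\infty$. Third, establish a quantitative quasi-independence estimate for the layers $R_q$ — strong enough that the divergence Borel--Cantelli lemma forces $\abs{\limsup R_q}>0$ — \emph{without} assuming $\psi$ monotonic. It is the third step that carries the novelty.

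The engine for the third step is an \emph{independence inheritance} principle, which I would set up as an inhomogeneous analogue of the slicing/mass-transference technology for systems of linear forms, peeling off one coordinate at a time. Splitting $\x=(\x_1,\x')$ into its first row $\x_1\in\I^m$ and the remaining rows $\x'\in\I^{(n-1)m}$, Fubini in $\x_1$ turns the $(n\times m)$ incidence $\sum_i q_i\x_i+\p\in B(\y,\psi(\abs\q))$, for fixed $\x'$, into the one-row incidence $q_1\x_1+\p\in B\bigl(\y-\textstyle\sum_{i\ge2}q_i\x_i,\ \psi(\abs\q)\bigr)$; here the auxiliary integer $q_1$ sweeps an entire interval, and averaging the resulting target sets over $q_1$ ``spreads'' them, which is precisely what buys one extra level of quasi-independence over the $(n-1)\times m$ problem. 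Iterating the row-peels, and running the parallel column-peeling that underlies Gallagher's monotonicity-free theorem for $(1,m)$ with $m\ge2$ \cite{Gallagherkt}, one descends to the $(1\times1)$ problem — classical Khintchine's theorem — having accumulated $(n-1)+(m-1)$ inherited levels of independence. Two such levels already absorb the failure of monotonicity: the divergence Borel--Cantelli lemma needs only that the accumulated count be at least $2$, i.e. $(n-1)+(m-1)\ge2$, which for positive integers is equivalent to $nm>2$. (For $m\ge3$ one may instead stop the descent at Yu's inhomogeneous theorem \cite{Yu}, and for $n\ge3$ at Sprind\v{z}uk's \cite{Sprindzuk}; the uniform route through $(1,1)$ is what also reaches $(2,2)$, where no single peel suffices on its own.)

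The principal obstacle is carrying out the inheritance step honestly, and it has three faces. First, the radius $\psi(\abs\q)$ is governed by the \emph{maximum} norm of the whole vector $\q$, so peeling off $q_1$ is not a genuine fibration: one must decompose dyadically according to whether $\abs{q_1}$ is at most or exceeds $\max_{i\ge2}\abs{q_i}$ and carefully track how the scale of the residual $(n-1)\times m$ problem is perturbed — this is where most of the analytic work lives and where the averaging producing the extra independence must be extracted. Second, the inhomogeneous shift $\y$ destroys the translation structure of the torus, so the equidistribution counts and the coprimality-type estimates that control the pairwise overlaps $\abs{R_s\cap R_t}$ — and, crucially, the uniformity of the implied constants in $\y$ — must be reproved in the inhomogeneous setting rather than quoted from the homogeneous theory. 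Third, one must keep the notion of ``one level of independence'' quantitatively precise through the induction, so that exactly two levels are seen to suffice. Finally, I expect the scheme to stop working for $(n,m)=(2,1)$ and $(1,2)$ precisely because the descent there yields only one inherited level — one short of the threshold — consistent with those cases remaining genuinely open (and with $(1,1)$ being false); going past this is the business of the paper's later results under an extra divergence hypothesis, not of the present argument.
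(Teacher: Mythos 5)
Your third step --- the independence inheritance, with the count of ``inherited levels'' and the observation that two levels suffice precisely when $nm>2$ --- is exactly the paper's engine (their $w$-QIA hierarchy: the $(1,1)$ sets are $2$-QIA, the $(1,2)$ sets are $1$-QIA, the $(1,m)$ sets with $m\ge 3$ are $0$-QIA, and each added row buys one level). The implementation differs, though: the paper does not fibre over the first row and average over $q_1$. Instead it splits pairs $\q_1,\q_2\in\ZZ^{n+1}$ into non-parallel pairs, which give \emph{exactly} independent sets (Lemma~\ref{lem:nparallel}), and parallel pairs, which reduce to the one-dimensional problem (Lemma~\ref{lem:invariance}) and whose number with prescribed norms is $\asymp\gcd(q_1,q_2)^{n}$; a dilation lemma then converts scales. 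This sidesteps entirely the maximum-norm fibration difficulty you flag as ``where most of the analytic work lives.'' Likewise there is no column-peeling: the dependence on $m$ enters only through the base-case overlap estimate, where one needs $\sum_{r\le q}\gcd(r,q)^{m+w}\ll q^{m+w}$, which holds exactly when $m+w\ge 3$.

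The genuine gap is your first step. You propose to reduce $\abs{\calA_{n,m}^{\y}(\psi)}=1$ to $\abs{\calA_{n,m}^{\y}(\psi)}>0$ via ``an inhomogeneous zero--one law (established separately, in the spirit of Cassels' homogeneous result).'' No such law is available: the Cassels and Gallagher arguments exploit the dilation invariance of the \emph{homogeneous} sets, and the paper states explicitly that it knows of no zero-one law for the general sets $\calA_{n,m}^{\y}(\psi)$ --- indeed such a law only follows \emph{a posteriori} from the very theorem you are proving, so invoking it here is circular (or, at best, replaces one open problem with another of comparable difficulty). The paper instead proves full measure directly: a ``mixing'' lemma shows $\abs{A(\q,B)\cap U}\ge C\abs{A(\q,B)}\abs{U}$ for every open $U$ once $\abs{\q}$ is large, so the quasi-independence survives localization to $U$; the divergence Borel--Cantelli lemma then gives $\abs{\limsup A(\q,B_{\abs{\q}})\cap U}\gg\abs{U}^2$, and Lemma~\ref{lem:lebesguedensity} upgrades this to full measure. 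Without this mechanism, or some genuine substitute for the zero-one law, your argument only yields positive measure.
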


\begin{remark*}
  As we have mentioned, the $n\geq 3$ parts of this theorem appear
  in~\cite{Sprindzuk}, and the $(1,m)$ cases with $m\geq 3$ appear
  in~\cite{Yu}. We present a conceptual proof that establishes the
  result for all $nm>2$.
\end{remark*}

Theorem~\ref{thm:marquee} is actually a special case of the following
more general theorem, which does not require the balls $(B_q)_{q \in \N}$ to be
concentric.

\begin{theorem}\label{thm:generalmarquee}
  Let $nm>2$.
  For any sequence of balls
  $\Psi :=(B_q)_{q \in \N}\subset\RR^m/\ZZ^m$, we have
  \begin{equation*}
    \abs*{\calA_{n,m}(\Psi)} =
    \begin{cases}
      0 &\textrm{if } \sum_{q=1}^\infty q^{n-1}\abs{B_q} < \infty\;, \\[2ex]
      1 &\textrm{if } \sum_{q=1}^\infty q^{n-1}\abs{B_q} =\infty\;.
    \end{cases}
  \end{equation*}
\end{theorem}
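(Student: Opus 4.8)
\medskip
\noindent\emph{Proof strategy.}
The convergence statement is the easy direction: writing $\calA_{n,m}(\Psi)=\limsup_{\q\in\Z^n\setminus\{\0\}}E_\q$ with $E_\q:=\{\x\in\I^{nm}:\q\x\in B_{\abs{\q}}+\Z^m\}$, a Fourier expansion of the indicator of $B_{\abs{\q}}$ shows $\abs{E_\q}=\abs{B_{\abs{\q}}}$ for every $\q\ne\0$, and since $\#\{\q:\abs{\q}=q\}\asymp q^{n-1}$ we get $\sum_{\abs{\q}\le N}\abs{E_\q}\asymp\sum_{q\le N}q^{n-1}\abs{B_q}$; if this is finite, the Borel--Cantelli lemma gives $\abs{\calA_{n,m}(\Psi)}=0$. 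From now on assume $\sum_{q}q^{n-1}\abs{B_q}=\infty$; we must prove $\abs{\calA_{n,m}(\Psi)}=1$.

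Instead of proving full measure directly, the plan is to prove the stronger local estimate $\abs{\calA_{n,m}(\Psi)\cap B}\gg\abs{B}$, with an absolute implied constant, for every ball $B\subset\I^{nm}$; by the Lebesgue density theorem this forces $\abs{\calA_{n,m}(\Psi)}=1$, and it avoids the need for a zero--one law, which would be delicate for non-concentric $\Psi$. Fixing $B$ and applying the Kochen--Stone form of the divergent Borel--Cantelli lemma to the events $E_\q\cap B$, it suffices to establish the quasi-independence-on-average bound
\begin{equation}\label{eq:qiplan}
  \sum_{\abs{\q},\,\abs{\q'}\le N}\abs{E_\q\cap E_{\q'}\cap B}\ \ll\ \Bigl(\,\sum_{\abs{\q}\le N}\abs{E_\q\cap B}\,\Bigr)^{2}
\end{equation}
for infinitely many $N$. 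Since each $E_\q$ is a union of congruent affine slabs of thickness $\asymp\abs{\q}^{-1}$, once one discards the bounded set of $\q$ that are small compared with the sidelength of $B$ one has $\abs{E_\q\cap B}\asymp\abs{E_\q}\,\abs{B}$, and likewise for the pairwise intersections, so $B$ comes along essentially for free; below we suppress it and bound $\sum_{\abs{\q},\abs{\q'}\le N}\abs{E_\q\cap E_{\q'}}$ against $\bigl(\sum_{\abs{\q}\le N}\abs{E_\q}\bigr)^2$. When $n=1$ (hence $m\ge3$) the argument below is vacuous and this is, in substance, the one-dimensional simultaneous theory of~\cite{Yu}; so assume $n\ge2$.

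Now comes the \emph{independence inheritance} step, which is where $n\ge2$ enters. The diagonal terms $\q=\q'$ contribute $\sum_{\abs{\q}\le N}\abs{E_\q}\asymp\sum_{q\le N}q^{n-1}\abs{B_q}$, which for large $N$ is $\ll$ its own square. For the off-diagonal terms, partition $\Z^n\setminus\{\0\}$ into \emph{primitive directions}: every $\q$ is uniquely $c\,\q_0$ with $c\in\Z\setminus\{0\}$ and $\q_0$ primitive, and the Haar-measure-preserving map $\Theta_{\q_0}\colon\x\mapsto\q_0\x\bmod\Z^m$ identifies $E_{c\q_0}$ with $\Theta_{\q_0}^{-1}(\tilde E_c)$, where $\tilde E_c:=\{z\in\I^m:cz\in B_{\abs{c\q_0}}+\Z^m\}$ has $\abs{\tilde E_c}=\abs{B_{\abs{c\q_0}}}$. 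If $\q_0\ne\q_0'$ are distinct primitive directions then, since $n\ge2$, they are linearly independent over $\Q$, so $\x\mapsto(\q_0\x,\q_0'\x)\bmod\Z^{2m}$ pushes Lebesgue measure forward to Lebesgue measure on $\I^{2m}$, whence $\abs{E_\q\cap E_{\q'}}=\abs{E_\q}\,\abs{E_{\q'}}$ \emph{exactly}: distinct-direction pairs are perfectly independent and so contribute at most $\bigl(\sum_{\abs{\q}\le N}\abs{E_\q}\bigr)^2$. The whole difficulty is therefore isolated in the within-direction, off-diagonal sum
\begin{equation*}
  W_N\ :=\ \sum_{\q_0\ \text{primitive}}\ \ \sum_{\substack{c\ne c'\\ \abs{c\q_0},\,\abs{c'\q_0}\le N}}\abs{\tilde E_c\cap\tilde E_{c'}}\,,
\end{equation*}
and for each fixed $\q_0$ this is precisely a correlation sum in the $(1\times m)$-dimensional simultaneous problem with balls $(B_{\abs{c\q_0}})_{c}$.

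The crux --- and the point at which the hypothesis $nm>2$ is genuinely spent --- is the bound $W_N\ll\bigl(\sum_{q\le N}q^{n-1}\abs{B_q}\bigr)^2$ along a sequence of $N$. Two ingredients go into it. First, an overlap estimate for the one-dimensional sets $\tilde E_c$: counting near-coincidences among the rational points $(\p+\beta_{\abs{c\q_0}})/c$ (where $\beta_q$ is the centre of $B_q$) shows that, off the diagonal, $\abs{\tilde E_c\cap\tilde E_{c'}}$ is controlled by $\abs{B_{\abs{c\q_0}}}\,\abs{B_{\abs{c'\q_0}}}$ up to a factor --- a power of $\gcd(c,c')$ --- measuring the arithmetic interaction of $c$ and $c'$. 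Second, one sums over $c,c'$ and over $\q_0$, reorganising by $q=\abs{c\q_0}$ and $q'=\abs{c'\q_0}$ and using that there are $\asymp d^{\,n-1}$ primitive $\q_0$ with $\abs{\q_0}=d$; the resulting arithmetic sum compares to $\bigl(\sum_{q\le N}q^{n-1}\abs{B_q}\bigr)^2$ exactly when $nm>2$. Concretely: for $m\ge3$ the one-dimensional correlations are already tame on their own --- this is what underlies~\cite{Yu} and, homogeneously,~\cite{Gallagherkt} --- so the $d^{\,n-1}$ multiplicity is slack; for $m=2$ the single extra level afforded by $n\ge2$ is exactly what is needed; and for $m=1$ one needs the two extra levels afforded by $n\ge3$, the $\gcd$-heavy terms being absorbed only thanks to the weight $d^{\,n-1}=d^{\,2}$. (For $(n,m)\in\{(1,1),(1,2),(2,1)\}$ the supply of extra levels falls one short --- which is why those cases are not covered --- and for $(1,1)$ the Duffin--Schaeffer counterexample~\cite{duffinschaeffer} shows the estimate, hence the theorem, genuinely fails.) Granting this bound on $W_N$, inequality~\eqref{eq:qiplan} holds along a sequence $N\to\infty$, Kochen--Stone gives $\abs{\calA_{n,m}(\Psi)\cap B}\gg\abs{B}$ uniformly in $B$, and the Lebesgue density theorem finishes the proof; Theorem~\ref{thm:marquee} is the special case $B_q=B(\y,\psi(q))$. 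Establishing this bound on $W_N$ --- the one-dimensional correlation estimate together with the summation that turns the $d^{\,n-1}$ multiplicity of directions into exactly the right arithmetic at the threshold $nm>2$ --- is the step I expect to be the main obstacle.
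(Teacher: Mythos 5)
Your outline is sound and, in substance, it is the paper's proof with the induction unrolled. Every ingredient you invoke corresponds to a lemma here: exact independence of non-parallel pairs is Lemma~\ref{lem:nparallel}; the reduction of a parallel pair $c\q_0, c'\q_0$ to a one-dimensional correlation via $\x\mapsto\q_0\x$ is Lemma~\ref{lem:invariance}; the gcd-based overlap estimate for $\tilde E_c\cap\tilde E_{c'}$ is Lemma~\ref{lem:overlaps}; and the localization-plus-density endgame is Lemma~\ref{lem:mixing}, Lemma~\ref{lem:borelcantelli} and Lemma~\ref{lem:lebesguedensity} (Proposition~\ref{prop:fullmeasure}). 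What you bypass is the paper's packaging of the parallel-pair analysis as an inductive ``inheritance'' step (Theorem~\ref{thm:stronginheritance}, via the dilation Lemma~\ref{lem:dilate} and the $w$-QIA hierarchy); your direct collapse to $n=1$ is legitimate, but it concentrates all the difficulty in the one summation you defer. That summation does close, with one caveat: you must not decouple the two gcd sums. Lemma~\ref{lem:overlaps} gives $\abs{\tilde E_c\cap\tilde E_{c'}}\ll\abs{B_{q}}\abs{B_{q'}}+\abs{B_{q'}}(\gcd(c,c')/c')^m$ with $\gcd(c,c')/c'=\gcd(q,q')/q'$, and the multiplicity of parallel pairs with norms $(q,q')$ is $\sum_{d\mid\gcd(q,q')}\#\{\q_0\ \text{primitive}:\abs{\q_0}=d\}\asymp\gcd(q,q')^{n-1}$; combining these \emph{before} summing gives $\sum_{q\le q'}\gcd(q,q')^{\,m+n-1}\ll (q')^{\,m+n-1}$, which holds precisely when $n+m\ge 4$, i.e. $nm>2$, and yields $W_N\ll\sum_{q'}(q')^{\,n-1}\abs{B_{q'}}$. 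If instead you first bound $\sum_{c\le c'}\gcd(c,c')^m\ll (c')^m$ for each fixed primitive direction and then sum $d^{\,n-1}$ over $d\mid q'$, you pick up $\sigma_{n-1}(q')$ in place of $(q')^{n-1}$, an unbounded $\log\log$-type loss in exactly the new cases $n=2$ (resp.\ $m=2$) --- the paper's Lemma~\ref{lem:dilate} and the $\Gamma^w$ bookkeeping exist precisely to make this absorption automatic.

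Two smaller repairs. First, your localization claim that the \emph{pairwise} intersections satisfy $\abs{E_\q\cap E_{\q'}\cap B}\ll\abs{E_\q\cap E_{\q'}}\abs{B}$ is neither proved nor needed: use the trivial bound $\abs{E_\q\cap E_{\q'}\cap B}\le\abs{E_\q\cap E_{\q'}}$ in the denominator and Lemma~\ref{lem:mixing} only in the numerator, which yields $\abs{\calA_{n,m}(\Psi)\cap B}\gg\abs{B}^2$; that weaker conclusion still gives full measure by Lemma~\ref{lem:lebesguedensity}, whereas the Lebesgue density theorem alone would not suffice even with $\gg\abs{B}$. Second, before any of this one should normalize $q^{n-1}\abs{B_q}\le c$ by shrinking balls (as in the paper's proof), so that diagonal-type contributions such as $\sum_{q\le N}q^{n-1}\abs{B_q}$ are dominated by the square of the divergent partial sums for large $N$; with these adjustments your plan is a correct, if flatter, version of the argument given here.
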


\begin{proof}[Proof of Theorem~\ref{thm:marquee}]
  This is the special case of Theorem~\ref{thm:generalmarquee} where
  all of the balls are concentric at the inhomogeneous parameter $\y$.
\end{proof}

Regarding the question of whether it is possible to remove the
monotonicity assumption from the general inhomogeneous
Khintchine--Groshev theorem, Theorem~\ref{thm:marquee} leaves open
only the cases $(1,2)$ and $(2,1)$. (Recall that we know the $(1,1)$
case to be impossible.) We remind the reader that monotonicity is not
required to prove the \emph{convergence} part of Theorem \ref{thm:marquee}
(nor is it required for the convergence part of Theorem
\ref{thm:generalmarquee}) in any case and, as stated above, the
convergence part of the general inhomogeneous Khintchine--Groshev
theorem is already known to be true without monotonicity for any
$n,m \geq 1$.  So, the question is really whether we can remove
monotonicity from the divergence part of the
statement. We suspect the truth of the following
  statement, which is the natural inhomogeneous analogue of
  \cite[Conjecture~A]{BBDV} in the cases when $nm=2$.

\begin{conjecture} \label{conj:nm=2}
Let $nm=2$. For any $\psi:\NN\to \RR_{\geq 0}$ and $\y\in \RR^m$, we have
  \begin{equation*}
    \abs*{\calA_{n,m}^\y(\psi)} = 1\qquad\textrm{if}\qquad \sum_{q=1}^\infty q^{n-1}\psi(q)^m =\infty.
  \end{equation*}
\end{conjecture}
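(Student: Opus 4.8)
The convergence half of the conjecture requires no monotonicity and is the usual Borel--Cantelli estimate, so the whole problem is the divergence half in the two cases $(n,m)=(2,1)$ and $(n,m)=(1,2)$. The plan is to run a second‑moment argument that produces positive measure and then invoke a zero--one law to reach full measure. For $Q\in\NN$ put
\[
N_Q(\x):=\#\{(\p,\q)\in\Z^m\times\Z^n:\ \abs{\q}\le Q,\ \q\x+\p\in B_{\abs{\q}}\},
\]
with the count arranged so that for each $\q$ at most one $\p$ is relevant near a given $\x$. Then $\int_{\I^{nm}}N_Q\,d\x\asymp\sum_{q\le Q}q^{\,n-1}\abs{B_q}\to\infty$ by hypothesis, and by the Chung--Erd\H{o}s inequality it suffices to prove the quasi‑independence bound $\int N_Q^2\,d\x\ll\bigl(\int N_Q\,d\x\bigr)^2$ along some sequence $Q\to\infty$; this yields $\abs{\calA_{n,m}^\y(\psi)}>0$, and a Cassels--Gallagher-type zero--one law valid for the inhomogeneous sets $\calA_{n,m}^\y(\psi)$ then upgrades this to full measure (alternatively one localises the whole argument to arbitrarily small boxes and gets full measure from local positive density).

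Everything thus reduces to the overlap sum $\sum_{\abs{\q},\abs{\q'}\le Q}\abs{E_\q\cap E_{\q'}}$, where $E_\q\subset\I^{nm}$ is the union over $\p$ of the balls attached to $\q$, so that $\abs{E_\q}=\abs{B_{\abs{\q}}}$. I would peel off the diagonal $\q=\q'$, which contributes $O(\int N_Q)$, and split the off‑diagonal terms into a ``generic'' range and a ``resonant'' range, according to how nearly parallel $\q,\q'$ are and how large $\gcd(\q,\q')$ is in the case $(2,1)$, and according to how many divisors $qq'$ has in the case $(1,2)$. For the generic range the independence inheritance phenomenon that underpins Theorem~\ref{thm:generalmarquee} is the right tool: in the dual case $(2,1)$ it supplies exactly one extra order of decorrelation over the $(1\times1)$ level, and in the simultaneous case $(1,2)$ — which is itself a base case, so the inheritance does not apply directly — one instead runs Gallagher's planar mechanism, in which near‑coincidences of rationals $\p/q,\p'/q'\in\I^2$ are rare because $\psi(q)^2$ decays fast enough that two such points lie within $\psi$ of one another essentially only when $q\asymp q'$. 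Either way the generic range sums to $O\bigl((\int N_Q)^2\bigr)$, and if it were the whole story the conjecture would follow.

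The main obstacle — and the reason the statement is a conjecture rather than a theorem in line with Theorem~\ref{thm:marquee} — is the resonant range, which is precisely the regime the Duffin--Schaeffer counterexample exploits at $nm=1$: mass concentrated on denominators $q$ sharing large common factors, or on vectors $\q$ confined to a sublattice or a narrow cone, for which the events $E_\q$ are far from independent. At $nm=2$ the single extra level of independence gained over the $(1,1)$ base case is just barely insufficient to absorb this contribution unconditionally, which is exactly why the proof of Theorem~\ref{thm:marquee} breaks down here. Under the extra‑divergence hypothesis of Theorem~\ref{thm:remaining} one can delete a sparse bad sub‑collection of $\q$ while keeping $\sum_{q\le Q}q^{n-1}\abs{B_q}$ divergent, which yields the partial result. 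To settle the full conjecture I would try to transplant the $\gcd$‑graph / anatomy‑of‑integers machinery that Koukoulopoulos and Maynard used to control the resonant range for $(1,1)$ into the dual setting $(2,1)$, and look for an additive‑combinatorial substitute for it in the planar simultaneous setting $(1,2)$. Obtaining a sharp, unconditional bound on the resonant overlap sum at the critical dimension $nm=2$ is, I expect, the genuine crux, and the reason the techniques that settle every case $nm>2$ fall short here.
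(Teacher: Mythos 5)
The statement you are asked about is Conjecture~\ref{conj:nm=2}, which the paper explicitly leaves \emph{open}: it is not proved there, and the only thing established in its direction is the extra-divergence partial result, Theorem~\ref{thm:remaining} (via Theorem~\ref{thm:generalextradiv}). Your proposal is likewise not a proof. You set up the correct second-moment framework, but at the decisive point you concede that the resonant range of the overlap sum --- pairs $\q,\q'$ with large $\gcd$ or confined to a common line --- is not controlled, and you propose only to ``try to transplant'' the Koukoulopoulos--Maynard machinery. That unproved bound on the resonant overlap sum is exactly the content of the conjecture, so the gap is not incidental: the argument terminates precisely where the theorem would have to begin. Your diagnosis of \emph{why} it is hard (the single extra level of independence inherited from the $(1,1)$ base case, in the paper's language $1$-QIA rather than $0$-QIA, is one step short of what the divergence Borel--Cantelli lemma needs) matches the paper's own account, and your sketch of the extra-divergence workaround (discarding a sparse subcollection, i.e.\ the dyadic $D_\ell$ decomposition of $q/\varphi(q)$) is essentially how Theorem~\ref{thm:generalextradiv} is proved. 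But correctly locating the obstruction is not the same as overcoming it.

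One further correction: you cannot invoke ``a Cassels--Gallagher-type zero--one law valid for the inhomogeneous sets $\calA_{n,m}^{\y}(\psi)$'' to upgrade positive measure to full measure --- the paper states explicitly that no such zero--one law is known in the inhomogeneous setting, and this is one of the obstacles its mixing argument (Lemma~\ref{lem:mixing} together with Proposition~\ref{prop:fullmeasure} and Lemma~\ref{lem:lebesguedensity}) is designed to circumvent. Your parenthetical alternative --- localising the second-moment estimate to arbitrarily small open sets and applying a Lebesgue density argument --- is the route that actually works and is the one the paper takes; the zero--one law route should be deleted rather than offered as the primary option.
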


Moreover, in line with the statement of Theorem \ref{thm:generalmarquee}, we actually expect the following more general result to be true when $nm=2$. Conjecture \ref{conj:nm=2} would follow from this more general result as an immediate corollary. 

\begin{conjecture}\label{conj:generalmarquee}
  Let $nm=2$. For any sequence of balls
  $\Psi :=(B_q)_{q \in \N}\subset\RR^m/\ZZ^m$, we have
  \begin{equation*}
    \abs*{\calA_{n,m}(\Psi)} = 1 \qquad\textrm{if}\qquad \sum_{q=1}^\infty q^{n-1}\abs{B_q} =\infty\;.
  \end{equation*}
\end{conjecture}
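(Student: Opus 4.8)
\begin{remark*}[Towards Conjecture~\ref{conj:generalmarquee}]
The natural plan is to treat the two cases $nm=2$ separately, since the geometry differs markedly: when $(n,m)=(1,2)$ the set $\calA_{1,2}(\Psi)$ is a $\limsup$ of unions of small balls sitting on the shifted lattices $\tfrac1q\ZZ^2$, whereas when $(n,m)=(2,1)$ it is a $\limsup$ of unions of thin slabs normal to primitive vectors $\q\in\ZZ^2$. In both cases the skeleton is the standard one for divergence-type metric theorems. First, one reduces via a zero-one law to proving that the $\limsup$ set has positive measure; the inhomogeneous shift does not obstruct this, since the relevant $\limsup$ sets are invariant under a suitable family of measure-preserving ergodic translations. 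Second, after the harmless truncation $\abs{B_q}\le 1$ and the removal of zero radii, one writes the set as $\limsup_N E_N$ for an explicit sequence: for $(1,2)$ one may take $E_q:=\bigcup_{\p}B\!\left(\tfrac{\p+\vv{c}_q}{q},\tfrac{r_q}{q}\right)\cap\I^2$ with $\vv{c}_q$ the centre and $r_q$ the radius of $B_q$; for $(2,1)$ one groups the vectors dyadically and sets $E_N:=\bigcup_{2^N\le\abs{\q}<2^{N+1}}F_\q$, where $F_\q$ is the corresponding union of slabs. A routine computation gives $\abs{E_q}\asymp\abs{B_q}$, resp.\ $\abs{E_N}\asymp\sum_{\abs{\q}\asymp 2^N}\abs{B_\q}$, so that the divergence hypothesis becomes $\sum_N\abs{E_N}=\infty$.

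The heart of the matter is the quasi-independence-on-average estimate
\[
  \sum_{M,N\le Q}\abs{E_M\cap E_N}\ \ll\ \Bigl(\sum_{N\le Q}\abs{E_N}\Bigr)^{2},
\]
after which the divergence Borel--Cantelli lemma gives $\abs{\limsup_N E_N}>0$ and the zero-one law promotes this to full measure. To bound the overlaps one decomposes $\abs{E_M\cap E_N}$ according to the resonance --- equivalently, the greatest common divisor --- structure of the denominators. For $(1,2)$, two balls $B\!\left(\tfrac{\p}{q},\tfrac{r_q}{q}\right)$ and $B\!\left(\tfrac{\p'}{q'},\tfrac{r_{q'}}{q'}\right)$ can meet only when $\abs{\p q'-\p' q}\ll\max(q r_{q'},q' r_q)$, and summing over the admissible $(\p,\p')$ produces, on writing $d=\gcd(q,q')$, a main term $\asymp\abs{E_q}\abs{E_{q'}}$ plus an error term governed by how often $d$ is large. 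For simultaneous dimension $m\ge 3$ this error is comfortably summable --- this is the range covered by Yu's theorem~\cite{Yu} --- but at $m=2$ it lies exactly on the threshold of summability. For $(2,1)$ one is instead counting pairs of primitive vectors $\q,\q'$ that are nearly parallel relative to the slab widths, which after a unimodular change of basis again reduces to a one-dimensional $\gcd$-count that is borderline precisely when $nm=2$; here one would first apply the independence-inheritance mechanism to pass from the $(2,1)$ problem to a $(1,1)$-level second-moment estimate that already carries one extra level of built-in independence, but the residual estimate remains this same borderline $\gcd$-count.

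Thus the only genuinely new ingredient needed is a bound showing that the ``bad pairs'' --- those for which the crude estimate $\abs{E_M\cap E_N}\ll\abs{E_M}\abs{E_N}$ fails, and which are governed by denominators with anomalously large common factors --- contribute only negligibly to $\sum_{M,N\le Q}\abs{E_M\cap E_N}$. This is precisely the phenomenon that the Koukoulopoulos--Maynard GCD-graph machinery was devised to control in their resolution of the one-dimensional homogeneous Duffin--Schaeffer conjecture~\cite{KMDS}, so the proposal is to develop the inhomogeneous, linear-forms analogue of that anatomy-of-integers argument and insert it into the scheme above. I expect this borderline second-moment estimate to be the main obstacle: it is the reason the method of the present paper halts at $nm>2$, and it is the reason that Theorem~\ref{thm:remaining} must instead assume the extra-divergence hypothesis. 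Carrying the GCD-graph argument through in full in the inhomogeneous $(1,2)$ and $(2,1)$ settings --- thereby dispensing with that hypothesis --- is exactly the content of Conjecture~\ref{conj:generalmarquee}.
\end{remark*}
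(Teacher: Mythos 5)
The statement you are addressing is a \emph{conjecture} in the paper: the authors do not prove it, and the furthest they get is Theorem~\ref{thm:generalextradiv}, which establishes the conclusion only under the extra-divergence hypothesis $\sum q^{n-1}(\varphi(q)/q)^{1+\eps}\abs{B_q}=\infty$. Your submission is, by its own admission, not a proof either: it is a plan whose decisive step --- the quasi-independence-on-average bound $\sum_{M,N\le Q}\abs{E_M\cap E_N}\ll(\sum_{N\le Q}\abs{E_N})^2$ at the threshold $nm=2$ --- is exactly the open problem. You correctly diagnose where the method of the paper breaks down (the term $\sum_q\abs{B_q}\,q/\varphi(q)$ coming from $\sum_{r\le q}\gcd(r,q)^2\ll q^3/\varphi(q)$ is not dominated by $(\sum_q\abs{B_q})^2$ in general), and you correctly identify that something like the Koukoulopoulos--Maynard GCD-graph analysis would be needed to control the pairs with large common factors. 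But naming the obstacle and gesturing at a technology that might overcome it is not a proof; no part of that machinery is actually adapted to the inhomogeneous or linear-forms setting here, and it is far from routine to do so (the KM argument uses reduced fractions and the specific arithmetic of the Duffin--Schaeffer setup in an essential way).

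There is also a concrete error in your first reduction. You propose to reduce to positive measure ``via a zero-one law,'' asserting that the relevant $\limsup$ sets are invariant under a family of ergodic translations. The paper explicitly points out that no zero-one law is known for the general inhomogeneous sets $\cA_{n,m}^{\y}(\psi)$, let alone for the non-concentric sets $\cA_{n,m}(\Psi)$; the classical Cassels and Gallagher arguments use the homogeneous structure (invariance under $x\mapsto x+p/q$ type maps up to small errors), which fails when the target balls are arbitrary. This is precisely why the paper replaces the zero-one law with the local mixing estimate of Lemma~\ref{lem:mixing} combined with Lemma~\ref{lem:lebesguedensity}, proving the Chung--Erd\H{o}s/Borel--Cantelli lower bound \emph{inside an arbitrary open set} $U$ and then invoking the density lemma. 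If you pursue this program, you should adopt that localization device rather than an unavailable zero-one law; but even with that repair, the central second-moment estimate remains unproven, so the conjecture stands open.
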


The challenge in the above conjectures is an issue that causes
difficulty in all problems of this type. Without going into detail, it
suffices to say that it comes from the fact that any rational point
can be expressed in infinitely many ways by changing
denominators. Authors contend with this issue in a variety of ways,
often under the banner of ``overlap estimates'' (see, for example, our
Lemma~\ref{lem:overlaps}). Indeed, the Duffin--Schaeffer conjecture
arose from an attempt to mitigate this difficulty by requiring that
all rational numbers be expressed in their reduced form, and adjusting
the divergence condition accordingly. Even after taking this measure,
the conjecture stood for almost 80 years. In the meantime, much of the
partial progress consisted of proofs of the conjecture under ``extra
divergence'' assumptions on the
series~\cite{AistleitnerDS,AistleitneretalExtraDivergence,BHHVextraii,HPVextra}. In
a similar vein, we have the following theorem, which can be seen as an
``extra divergence'' version of an inhomogeneous analogue of the
Duffin--Schaeffer conjecture for systems of linear forms.

\begin{theorem}\label{thm:remaining}
  Let $\eps>0$. For any $\psi:\NN\to \RR_{\geq 0}$ and $y\in \RR$, we
  have
  \begin{equation*}
    \abs*{\calA_{2,1}^y(\psi)} = 1\qquad\textrm{if}\qquad \sum_{q=1}^\infty \parens*{\frac{\varphi(q)}{q}}^{1+\eps} q \psi(q) =\infty.
  \end{equation*}
  For any
  $\psi:\NN\to \RR_{\geq 0}$ and $\y\in \RR^2$, we have
  \begin{equation*}
    \abs*{\calA_{1,2}^\y(\psi)} = 1\qquad\textrm{if}\qquad \sum_{q=1}^\infty \parens*{\frac{\varphi(q)}{q}}^{1+\eps} \psi(q)^2 =\infty.
  \end{equation*}
  Here, $\varphi$ denotes the Euler totient function.
\end{theorem}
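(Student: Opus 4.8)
The plan is to prove full Lebesgue measure of the relevant $\limsup$ set by a \emph{weighted} second moment argument, in which the extra divergence buys back precisely the probabilistic independence that is missing when $nm=2$. I would treat the two cases in parallel: write $E_q$ for the ``level-$q$'' set, so that $\calA_{2,1}^{y}(\psi)=\limsup_q E_q$ with $|E_q|\asymp q\psi(q)$ in the first case, and $\calA_{1,2}^{\y}(\psi)=\limsup_q E_q$ with $|E_q|\asymp\psi(q)^2$ in the second; in the latter, $E_q$ is a product $E_q^{(1)}\times E_q^{(2)}$ of two one-dimensional level sets. In both cases, after the usual harmless truncation of $\psi$, the hypothesis reads $\sum_q w_q|E_q|=\infty$ with weights $w_q:=\big(\varphi(q)/q\big)^{1+\eps}\le 1$. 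By the zero--one law for these sets (or, equivalently, by localizing the estimates below to arbitrary balls so as to obtain full measure directly), it suffices to show $|\limsup_q E_q|>0$.

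The next step is to form the weighted counting function $N(\x,Q):=\sum_{q\le Q}w_q\bone_{E_q}(\x)$, so that $\int N(\cdot,Q)=\sum_{q\le Q}w_q|E_q|\to\infty$. Everything rests on the second moment $\int N(\cdot,Q)^2=\sum_{q,q'\le Q}w_qw_{q'}|E_q\cap E_{q'}|$. The diagonal is at most $\sum_q w_q|E_q|$ and is negligible. For the off-diagonal terms I would invoke an overlap estimate of the type recorded in Lemma~\ref{lem:overlaps}: it splits $|E_q\cap E_{q'}|$ into a quasi-independent part $O(|E_q||E_{q'}|)$, whose weighted sum is $O\big((\int N(\cdot,Q))^2\big)$, and an arithmetic correction governed by $\gcd(q,q')$. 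For $\calA_{2,1}$ this correction is the contribution of the rationally proportional (``parallel'') systems $\q$ among the $\asymp q$ systems with $|\q|=q$; for $\calA_{1,2}$ it is the product of the two one-dimensional $\gcd$-corrections for $E_q^{(i)}\cap E_{q'}^{(i)}$ (and the inhomogeneous shifts only translate the resonant intervals, without inflating these counts, so the inhomogeneity causes no essential new difficulty at this stage).

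The observation that makes the weights earn their keep is that $d\mid q$ forces $\varphi(q)/q\le\varphi(d)/d$; hence $w_qw_{q'}$ damps exactly those pairs $(q,q')$ with a large common factor, which are exactly the pairs contributing a large correction. Pushing this through, the weighted correction sum collapses to a multiplicative series of the form $\sum_d \tfrac1d\big(\varphi(d)/d\big)^{2+2\eps}\times(\text{divisor and counting data})$, and the exponent $1+\eps$ — rather than the borderline exponent $1$ — is exactly what forces convergence with room to spare, giving $\int N(\cdot,Q)^2\ll\big(\int N(\cdot,Q)\big)^2$ along a sequence of $Q\to\infty$. A weighted divergence Borel--Cantelli argument (the weights being harmless since $w_q\le 1$) then yields $|\limsup_q E_q|>0$, and the zero--one law promotes this to measure $1$. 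Informally, the single power of $\varphi(q)/q$ plays the role it plays in the Duffin--Schaeffer circle of ideas — neutralizing coincidences between resonant sets of different ``denominators'' — while the extra $\eps$ is the customary extra-divergence slack; this is consistent with the paper's independence-inheritance picture, in which $(2,1)$ sits one level more independent than the unremovable $(1,1)$ case but one level short of the unconditional range $nm>2$.

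The step I expect to be the main obstacle is the weighted correction estimate of the previous paragraph: one must show that the $\gcd$-correction from the overlap lemma, once weighted by $\big(\varphi(q)/q\big)^{1+\eps}\big(\varphi(q')/q'\big)^{1+\eps}$, sums to $O\big((\int N(\cdot,Q))^2\big)$ and not merely to $O\big((\int N(\cdot,Q))^2\log Q\big)$ — that is, that the hypothesis is used without waste. This is precisely where the two cases require separate (though structurally identical) bookkeeping: in the dual case $(2,1)$ because of the extra layer of summation over the $\asymp q$ systems of a given norm, and in the simultaneous case $(1,2)$ because the product structure of $E_q$ makes the one-dimensional correction enter squared, to be controlled together with its cross term.
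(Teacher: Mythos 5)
Your overall architecture (overlap lemma, gcd-correction, weights damping common factors, localization for full measure) assembles the right ingredients, but the step you yourself flag as the main obstacle is a genuine gap, and it is exactly where the paper's proof departs from yours. A single global weighted second-moment bound of the form $\int N(\cdot,Q)^2\ll\parens{\int N(\cdot,Q)}^2$ is not what the paper proves and does not follow from the stated estimates. Bounding the inner gcd-sum as in the paper's computation~(\ref{eq:now}) (i.e.\ $\sum_{r\le q}\gcd(r,q)^2\ll q^3/\varphi(q)$) and exploiting the second weight via $w_r\le(\varphi(d)/d)^{1+\eps}$ for $d=\gcd(r,q)\mid r$, the weighted correction term comes out of size roughly $\sum_{q\le Q}w_q\abs{B_q}\,(q/\varphi(q))^{1-\eps}$. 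For $\eps<1$ this is \emph{linear} in a quantity that can exceed $\sum_{q\le Q}w_q\abs{B_q}$ by an unbounded factor, and a linear term cannot in general be absorbed into the \emph{square} of a slowly divergent sum, even along a subsequence of $Q$: if the mass of $\sum_q w_q\abs{B_q}$ is spread thinly over ranges of $q$ on which $q/\varphi(q)\asymp 2^\ell$ with $\ell\to\infty$, the first moment may grow like $\log\log Q$ while the correction grows much faster. The extra $\eps$ by itself does not rescue a global second-moment argument; your claim that the correction ``collapses to a multiplicative series'' that converges ``with room to spare'' is asserted, not established, and is false in this generality.

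What the paper does instead (following Yu) is to forgo global quasi-independence entirely. It partitions $\NN$ into the blocks $D_\ell=\set{q:2^\ell\le q/\varphi(q)<2^{\ell+1}}$, on each of which the troublesome factor $q/\varphi(q)$ is essentially constant. If a single block carries a divergent weighted sum, one gets $0$-QIA on that block and concludes by Proposition~\ref{prop:fullmeasure}; otherwise one applies the Chung--Erd\H{o}s inequality (Lemma~\ref{lem:erdoschung}) separately on each block, where the ratio of correction to main term is $\asymp 2^{-\eps\ell}/\Sigma_{\ell,Q}$, and the divergence of $\sum_\ell\Sigma_\ell$ guarantees infinitely many blocks whose unions, intersected with an arbitrary open set $U$, have measure $\gg\abs{U}^2$; the limsup over blocks then has measure $\gg\abs{U}^2$ and Lemma~\ref{lem:lebesguedensity} finishes. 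This blockwise Chung--Erd\H{o}s argument is the missing idea in your proposal. Separately, your primary route to full measure via ``the zero--one law for these sets'' is unavailable: no zero--one law is known for the inhomogeneous sets $\cA_{n,m}^{\y}(\psi)$ (the paper emphasizes this explicitly), so the alternative you mention only parenthetically --- localizing every estimate to an open set $U$ via Lemma~\ref{lem:mixing} and invoking Lemma~\ref{lem:lebesguedensity} --- is not optional but mandatory, and it must be threaded through the blockwise argument from the start.
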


\begin{remark*}
  This result improves on~\cite[Theorem~1.8]{Yu}, where
    the $(1,2)$ case is proved with an exponent $2$ in place of
    $1 + \eps$.
\end{remark*}

\begin{remark*} As with Theorem~\ref{thm:marquee},
    Theorem~\ref{thm:remaining} is a special case of a more general
    theorem that does not require concentric balls. It is listed here
    in Section \ref{sec:remaining} as
    Theorem~\ref{thm:generalextradiv}.
\end{remark*}

It is well-known that $\varphi(q) \gg q/\log\log q$. Therefore,
Theorem~\ref{thm:remaining} immediately implies that in the two cases
where $nm=2$, the divergence part of Theorem~\ref{thm:marquee} holds
under the extra divergence condition
\begin{equation*}
  \sum_{q=1}^\infty \frac{q^{n-1}\psi(q)^m}{(\log\log q)^{1+\eps}} =\infty.
\end{equation*}
Readers who are familiar with the Duffin--Schaeffer conjecture and its
``extra divergence'' precursors will naturally suspect that
Theorem~\ref{thm:remaining} is true even with $\eps=0$. Indeed,
Conjecture~\ref{conj:nm=2} already predicts something even
stronger. We therefore suggest that proving the ``$\eps=0$'' case of
Theorem~\ref{thm:remaining} may be seen as an interesting intermediate
challenge that is likely to be more tractable than full resolutions of
the above conjectures.

Interestingly, the guiding principle and key result of this paper
(presented in the next subsection lead us to believe that the
``$(n,m)=(2,1)$, $\eps=0$'' case of Theorem~\ref{thm:remaining} would
follow from a weak inhomogeneous version of the Duffin--Schaeffer
conjecture, stating that
  \[\abs*{\calA_{1,1}^{\y}(\psi)} = 1\qquad\textrm{if}\qquad
    \sum_{q=1}^{\infty}{\frac{\psi(q)\varphi(q)}{q}} = \infty,\]
  provided it were proved by establishing quasi-independence on
  average of the necessary sets. Informally, this guiding principle
  indicates that whenever one can prove something for $(n,m)$, one
  should expect to be able to do it for $(n+1,m)$ too.
  
  In the present paper, our main motivating objective has been to
  remove redundant monotonicity assumptions from the classical
  inhomogeneous Khintchine--Groshev theorem. In another direction,
  Dani, Laurent, and Nogueira \cite[2015]{DLN} proved refinements of
  the classical Khintchine--Groshev theorem where they imposed certain
  primitivity constraints on their ``approximating points''. The
  methods they used relied on the monotonicity of the approximating
  functions. In addition, their main result~\cite[Theorem~1.1]{DLN} is
  doubly metric, in the sense that it holds for almost every pair
  $(\bx, \by)\in \RR^{nm}\times\RR^m$. In future work we hope to investigate whether the
  approach presented in this paper may be adapted to remove
  monotonicity from statements given in \cite{DLN}, or to establish a
  singly metric version of their main result. Any progress in these
  directions would go some way towards addressing Problems 1 and 2
  posed by Laurent in~\cite[2016]{Laurent2016}.

\subsection{An ``independence inheritance'' theorem}
\label{sec:an-indep-inher}
The results described in this section are the main new tool for
establishing Theorems~\ref{thm:marquee} and~\ref{thm:generalmarquee}. They also provide an instructive point of
view on all statements of Khintchine--Groshev type, and so we find
them interesting in their own right. To put it informally, they tell
us that statements in the $(n,m)$ case imply statements in the
$(n+k, m)$ case, hence, we may take as a guiding principle that
\begin{equation*}
  \textrm{``Khintchine''}\qquad\implies\qquad\textrm{``Khintchine--Groshev''}
\end{equation*}
in all its various incarnations. In order to state this principle
precisely, some discussion is needed.

The set $\calA_{n,m}(\Psi)$ is the limsup set of a sequence of
measurable subsets of $\I^{nm}$. As such, its measure is $0$ whenever
the measures of those subsets have a converging sum --- a simple
consequence of the First Borel--Cantelli lemma (see, for example,
\cite[Lemma 1.2]{Harman}). Indeed, this is why, as we have mentioned,
the convergence parts of the theorems in the previous subsection are
straightforward to establish.

On the other hand, if the measure sum \emph{diverges}, it takes much
more work to show that $\calA_{n,m}(\Psi)$ has full, or even positive,
measure. (In fact, it might not have positive measure, as
counterexamples in the $(n,m)=(1,1)$ setting have shown.)  Invariably,
that work has involved showing that the sequence of sets exhibits some
sort of ``independence,'' such as is necessary to apply a partial
converse of the Borel--Cantelli lemma. In fact, Beresnevich and Velani
have shown that, in a sense made precise in~\cite{BVBC}, a limsup set
has positive measure \emph{only if} there is some independence
present.

For us, the relevant notion of independence is that of
quasi-independence on average. A sequence $(A_{\q})_{\q\in\ZZ^n}$ of
measurable subsets of a finite measure space $(X,\mu)$ is said to be
\emph{quasi-independent on average} (or QIA for short) if
\begin{equation}\label{eq:3}
  \limsup_{Q\to\infty} \frac{\parens*{\sum_{\abs{\q}=1}^Q \mu(A_\q)}^2}{\sum_{\abs{\q},\abs{\br}=1}^Q
  \mu(A_\q\cap A_\br)} > 0. 
\end{equation}
The importance of this concept is made clear by a well-known partial
converse to the first Borel--Cantelli lemma, stated here as
Lemma~\ref{lem:borelcantelli}. It guarantees that if a sequence is
quasi-independent on average, then the associated limsup set has
positive measure.

The sets we will be working with here are of the following form. Given
a ball $B\subset \RR^m/\ZZ^m$ and $\q \in \Z^{n}$, let $A_{n,m}(\q,B)$ denote the
set of $\x\in \I^{nm}$ for which there exists a $\p \in\ZZ^m$ with
$\q\x + \p \in B$. When the meaning is clear from context, we may omit
the subscript and simply write $A(\q,B)$.  When $n=1$ we customarily
write the non-bold $q$, i.e. $A(q,B)$. The sets $\calA_{n,m}(\Psi)$
appearing in our main results are limsup sets for sequences of sets of
this form, so the problem of establishing Theorems~\ref{thm:marquee}
and~\ref{thm:generalmarquee} naturally involves studying the
independence properties of the sets $A_{n,m}(\q, B)$.

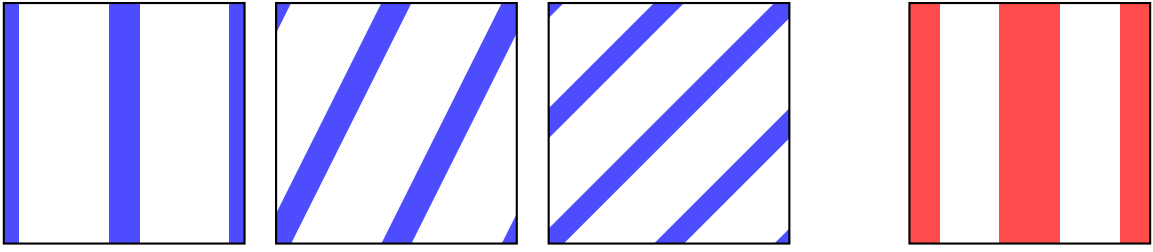
\begin{figure}[h]
  \centering
  \begin{framed}
      \begin{tikzpicture}[scale=0.4]
      \fill[fill opacity=0.7, color=blue] (0.5,0) -- (0.5,8)--(0,8)--(0,0)--(0.5, 0);

      \fill[fill opacity=0.7, color=blue] (4.5,0) --
      (4.5,8)--(3.5,8)--(3.5,0)--(4.5,0);

      \fill[fill opacity=0.7, color=blue] (8,0) -- (8,8)--(7.5,8)--(7.5,0)--(8, 0);

      \draw[thick] (0,0) rectangle (8,8);      
    \end{tikzpicture}\quad
    \begin{tikzpicture}[scale=0.4]

      \fill[fill opacity=0.7, color=blue] (0,7) -- (0.5,8)--(0,8)--(0,
      7);
      
      \fill[fill opacity=0.7, color=blue] (0.5,0) -- (4.5,8)--(3.5,8)--(0,
      1)--(0,0)--(0.5, 0);

     \fill[fill opacity=0.7, color=blue] (4.5,0) --
     (8,7)--(8,8)--(7.5,8)--(3.5, 0)--(4.5,0);

     \fill[fill opacity=0.7, color=blue] (7.5,0) --
     (8,0)--(8,1)--(7.5,0);

      \draw[thick] (0,0) rectangle (8,8);      
    \end{tikzpicture}\quad
    \begin{tikzpicture}[scale=0.4]

      \fill[fill opacity=0.7, color=blue] (0,7.5) -- (0.5,8)--(0,8)--(0,
      7.5);
      
      \fill[fill opacity=0.7, color=blue] (0,0) -- (0.5,0) -- (8,7.5) -- (8,8)--(7.5,
      8)--(0,0.5--(0,0);

     \fill[fill opacity=0.7,  color=blue] (4.5,0) --
     (8,3.5)--(8,4.5)--(3.5,0)--(4.5,0);

     \fill[fill opacity=0.7, color=blue] (7.5,0) --
     (8,0)--(8,0.5)--(7.5,0);

     \fill[fill opacity=0.7, color = blue] (0, 4.5) --
     (3.5,8)--(4.5,8)--(0,3.5)--(0,4.5);

      \draw[thick] (0,0) rectangle (8,8);      
    \end{tikzpicture}\qquad\qquad
    \begin{tikzpicture}[scale=0.4]
      \fill[fill opacity=0.7, color=red] (1,0) -- (1,8)--(0,8)--(0,0)--(1, 0);

      \fill[fill opacity=0.7, color=red] (5,0) --
      (5,8)--(3,8)--(3,0)--(5,0);

      \fill[fill opacity=0.7, color=red] (8,0) -- (8,8)--(7,8)--(7,0)--(8, 0);

      \draw[thick] (0,0) rectangle (8,8);      
    \end{tikzpicture}
    \caption{On the left are three of the $16$ possible sets
      $A_{2,1}(\q, B_q)$ with $\abs{\q}=q=2$. On the right is the set
      $A_{1,1}(\q, qB_q)\times [0,1]$. The areas of the $16$ possible
      blue regions sum to the area of the red region, up to a constant
      which depends on the dimensions; in this case, that constant is~$8$. One may imagine that the red region has been fractured into
      $q=2$ pieces of equal area, (duplicated $8$ times,) and shuffled
      in order to make the blue
      regions. Theorem~\ref{thm:stronginheritance} quantifies the
      extra independence that is gained in this fracturing and
      shuffling.}
  \label{fig:stripes}
  \end{framed}
\end{figure}

As a starting point, we take a na\"{\i}ve point of view, which we will
illustrate in the $(2,1)$ case (see Figure~\ref{fig:stripes}). The
divergence condition $\sum q\abs{B_q} = \infty$ should be understood as the
divergence of a measure sum. The summand $q\abs{B_q}$ is, up to a
constant, the sum of the measures of $A_{2,1}(\q, B_{\abs{\q}})$ as
$\q$ ranges through the sphere of radius $q$ in $\ZZ^2$. These sets
each have measure $\abs{B_q}$, and for each $\abs{\q}=q$, the set
$A_{2,1}(\q, B_q)$ can be visualized as an array of ``stripes''
through the unit square, at an angle determined by the angle of
$\q$. On the other hand one can interpret the sum $\sum q\abs{B_q}$ as
$\sum \abs{q B_q}$, where $qB_q$ denotes the $\times q$-dilation of
$B_q$ around its center. This way, one may imagine that the sum is the
measure sum corresponding to the sets $A_{1,1}(q, qB_q)$, each of
which is a union of intervals. Any metric properties of these sets
will be shared trivially by the vertically extended sets
$A_{1,1}(q,qB_q)\times [0,1]\subset \I^2$. Notice that this last set,
which is a union of thick vertical stripes, has the same measure (up
to a constant) as the \emph{sum} of the measures of the sets
$\set*{A_{2,1}(\q, B_q)}_{\abs{\q}=q}$, which are unions of thin
stripes arranged in many different directions. This leads to the na\"{\i}ve
suspicion that
\begin{equation*}
\textrm{\emph{the sets $\set*{A_{2,1}(\q, B_{\abs{\q}})}_{\q\in\ZZ^2}$ should be at least as independent as
  the sets $\set*{A_{1,1}(q,qB_q)}_{q\in\ZZ}$,}}
\end{equation*}
because it is as if the latter have each been fractured into $q$
pieces which are then shuffled into a disordered arrangement to create
the former.

The next result confirms this intuition (and a stronger result, to be
discussed, quantifies it). It states that quasi-independence on
average for certain sequences of sets in $\I^{nm}$ is inherited by an
associated sequence of sets in $\I^{(n+k)m}$ for any integer
$k \geq 0$, and this allows us to prove full measure for the
corresponding limsup set in $\I^{(n+k)m}$.

\begin{theorem}[Independence Inheritance]\label{thm:inheritance}
  Let $n,m\geq 1$. Suppose $(B_q)_{q\in\N}$ is a sequence of balls in
  $\RR^m/\ZZ^m$. If the sets
  $\parens*{A_{n,m}(\q, B_{\abs{\q}})}_{\q\in\ZZ^n}$ are quasi-independent
  on average, then so are the sets
  $\parens*{A_{n+k,m}(\q, \abs{\q}^{-k/m}B_{\abs{\q}})}_{\q\in\ZZ^{n+k}}$ for
  every $k\geq 0$.  If in addition the sum $\sum \abs{B_q}$ diverges,
  then for every $k\geq 0$, the set
  \begin{equation*}
    \limsup_{\abs{\q}\to\infty}A_{n+k,m}(\q, \abs{\q}^{-k/m}B_{\abs{\q}})
  \end{equation*}
  has full Lebesgue measure.
\end{theorem}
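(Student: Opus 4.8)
The plan is to reduce the $(n+k,m)$ statement to the $(n,m)$ statement by a direct "slicing and rescaling" argument connecting the sets $A_{n+k,m}(\q,\abs{\q}^{-k/m}B_{\abs{\q}})$ to the sets $A_{n,m}(\q',B_{\abs{\q'}})$. The key structural observation is that $A_{n+k,m}(\q,B)$, viewed as a subset of $\I^{(n+k)m}$, is a union of "thin stripes" determined by the direction of $\q\in\ZZ^{n+k}$; when one integrates out $km$ of the coordinates, the remaining $nm$-dimensional fibre looks like a set $A_{n,m}(\q'',B')$ for a suitable projected vector $\q''$ and a ball $B'$ whose radius has been inflated by roughly $\abs{\q}^{k/m}$ — exactly cancelling the $\abs{\q}^{-k/m}$ in the statement. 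The main work is to make this precise at the level of the QIA quotient \eqref{eq:3}, i.e.\ to control both $\sum_{\abs{\q}=1}^Q\mu(A_\q)$ and the pairwise sums $\sum_{\abs{\q},\abs{\br}=1}^Q\mu(A_\q\cap A_\br)$ for the $(n+k,m)$ sets in terms of the corresponding quantities for the $(n,m)$ sets.

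First I would record the basic measure identity: for a ball $B$ of radius $\rho$ and $\q\in\ZZ^{n+k}$ with $\abs{\q}=q$, one has $\mu\bigl(A_{n+k,m}(\q,\abs{\q}^{-k/m}B)\bigr)\asymp \rho^m$ (the stripes have total width $\asymp\abs{B}^{1/m}\cdot\abs{\q}^{-k/m}$ in each of $m$ coordinate blocks, but the number of translates in the other directions scales up correspondingly), so that $\sum_{\abs{\q}=1}^Q \mu(A_{n+k,m}(\q,\abs{\q}^{-k/m}B_{\abs{\q}}))\asymp \sum_{q=1}^Q q^{n+k-1}\abs{B_q}$ while $\sum_{\abs{\q}=1}^Q\mu(A_{n,m}(\q,B_{\abs{\q}}))\asymp\sum_{q=1}^Q q^{n-1}\abs{B_q}$; the point is that the numerators of the two QIA quotients differ by a controlled factor coming from the extra $q^{k}$ integer points. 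Second, and this is the crux, I would prove a pairwise overlap estimate of the form
\begin{equation*}
  \sum_{\substack{\abs{\q},\abs{\br}\leq Q\\ \q,\br\in\ZZ^{n+k}}} \mu\bigl(A_{n+k,m}(\q,\abs{\q}^{-k/m}B_{\abs{\q}})\cap A_{n+k,m}(\br,\abs{\br}^{-k/m}B_{\abs{\br}})\bigr)
  \ \ll\ \Bigl(\tfrac{1}{Q^{?}}\sum_{\abs{\q}'\leq Q}\mu(A_{n,m}(\q',B_{\abs{\q'}}))\Bigr)^{2} + (\text{lower order}),
\end{equation*}
relating the $(n+k,m)$ pairwise mass to the square of the $(n,m)$ single-set mass (plus the diagonal term). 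The natural route is to split the sum according to whether $\q,\br$ are parallel (or nearly so) and, when they are not, to use that the intersection of two transverse stripe-systems factorises as a product of the two stripe measures up to $O(1)$ — this is where the extra $k$ levels of independence come from, since every additional coordinate block gives another transverse factorisation. When $\q\parallel\br$ one invokes exactly the hypothesised QIA for the $(n,m)$ sets (after projecting to a rank-$n$ sublattice). Combining the numerator comparison with this overlap bound, the QIA quotient for the $(n+k,m)$ family is bounded below by a positive multiple of the QIA quotient for the $(n,m)$ family, which is positive by hypothesis; hence the $(n+k,m)$ sets are QIA. Finally, if $\sum\abs{B_q}=\infty$ then (since $k\geq0$) $\sum q^{n+k-1}\abs{B_q}=\infty$ too, so the partial converse Borel--Cantelli lemma (Lemma~\ref{lem:borelcantelli}) gives positive measure for the limsup set; a standard zero--one law for these limsup sets (invariance/ergodicity under the relevant group action, or a density/Lebesgue-point argument) upgrades positive measure to full measure.

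The step I expect to be the main obstacle is the pairwise overlap estimate, specifically handling pairs $\q,\br$ that are \emph{nearly} parallel but not exactly so, and the bookkeeping that translates an intersection of tilted stripe-systems in $\I^{(n+k)m}$ into a clean product bound — this is precisely the ``overlap estimate'' difficulty flagged in the introduction (a rational point having many denominator representations reappears here as many near-parallel $\q$). One must be careful that the transversality gain is genuinely $\asymp\abs{\q}^{k/m}$ per coordinate block and not eroded by the summation over the many $\q$ of a given length; organising the sum over directions of $\q$ by a dyadic decomposition of the angle between $\q$ and $\br$ should make the geometry of the stripes manageable and isolate the truly parallel contribution, which is the only place the $(n,m)$ QIA hypothesis is needed.
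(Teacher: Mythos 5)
Your high-level architecture --- exact independence for non-parallel pairs, reduction of parallel pairs to the $(n,m)$ case, then Borel--Cantelli plus a density argument --- matches the paper's (which proves a stronger, quantified statement, Theorem~\ref{thm:stronginheritance}, by induction on $k$). However, there are concrete gaps. To begin with, your ``basic measure identity'' is false: $\abs*{A_{n+k,m}(\q,\abs{\q}^{-k/m}B)}$ equals $\abs{\q}^{-k}\abs{B}$ exactly, since the set is the preimage of the contracted ball under the measure-preserving map $\x\mapsto\q\x \bmod 1$; it is not $\asymp\abs{B}$ uniformly in $\q$. The correct identity makes the numerator comparison trivial, namely $\sum_{\abs{\q}\le Q}\abs*{A_{n+k,m}(\q,\abs{\q}^{-k/m}B_{\abs{\q}})}\asymp\sum_{q\le Q}q^{n-1}\abs{B_q}\asymp\sum_{\abs{\br}\le Q}\abs*{A_{n,m}(\br,B_{\abs{\br}})}$, which is exactly why the contraction $\abs{\q}^{-k/m}$ appears in the statement; your version, with numerators differing by an unbounded factor, would derail the quotient comparison.

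More seriously, the step you flag as the main obstacle (nearly-parallel pairs) is a non-issue --- by Sprind\v{z}uk's lemma (Lemma~\ref{lem:nparallel}) \emph{any} non-parallel pair $\q_1,\q_2$ gives exactly independent sets, whatever the angle, so no dyadic angular decomposition is needed --- while the genuine obstacle in the exactly-parallel case is not addressed. When you ``project to a rank-$n$ sublattice,'' roughly $\gcd(q_1,q_2)^{k}$ distinct parallel pairs in $\ZZ^{n+k}$ with $\abs{\q_i}=q_i$ collapse onto each pair in $\ZZ^{n}$, so the $(n+k,m)$ parallel-pair sum is a $\gcd(q_1,q_2)^{k}$-weighted version of the $(n,m)$ one and cannot simply be bounded by ``the hypothesised QIA.'' The paper compensates with a dilation estimate (Lemma~\ref{lem:dilate}): the overlap of the contracted-ball sets is at most $q_1^{-k}$ times the overlap of suitably dilated ones, so the weight becomes $(\gcd(q_1,q_2)/q_1)^{k}\le 1$; organizing this bookkeeping is precisely what forces the $w$-QIA hierarchy of Definition~\ref{def:weakind} and the inductive proof of Theorem~\ref{thm:stronginheritance}. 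Finally, your appeal to ``a standard zero--one law'' for the full-measure upgrade is unavailable: no inhomogeneous zero--one law is known for these sets (the paper stresses this), and instead one needs the mixing estimate of Lemma~\ref{lem:mixing}, which localizes the QIA computation to an arbitrary open set $U$, yields $\abs*{\limsup_{\abs{\q}\to\infty}A_{n+k,m}(\q,\abs{\q}^{-k/m}B_{\abs{\q}})\cap U}\gg\abs{U}^{2}$, and then gives full measure via Lemma~\ref{lem:lebesguedensity}. Your parenthetical ``density argument'' is the right instinct, but the mixing lemma is an essential ingredient you have not supplied.
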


The philosophy behind applications of Theorem~\ref{thm:inheritance} is
that Khintchine--Groshev-like statements in the case $(n+k,m)$ are
weaker than in the case $(n,m)$. Indeed, by using 
  Theorem~\ref{thm:inheritance}, one can show that for $m\geq 2$, the $m$-dimensional
Khintchine theorem \emph{implies} the $(n,m)$-dimensional
Khintchine--Groshev theorem, because the sets involved in Khintchine's
theorem are quasi-independent on average in dimensions
$\geq 2$~\cite{Gallagherkt}. The same implication holds
inhomogeneously, as well as inhomogeneously \emph{sans} monotonicity
in dimension $\geq 3$; that is, the inhomogeneous Khintchine theorem
in dimension $m\geq 3$ implies the inhomogeneous Khintchine--Groshev
theorem in dimensions $(n,m)$ for any $n$. The quasi-independence on
average of the relevant sets in the cases $(1,m)$ with $m\geq 3$ is
established in Proposition~\ref{prop:basecase} as well as in Yu's
proof of~\cite[Theorem~1.8]{Yu}.

In fact, Theorem~\ref{thm:inheritance} follows from a stronger
inheritance phenomenon, Theorem~\ref{thm:stronginheritance}, which
allows us to quantify how much ``more independent'' the sets in the
$(n+k,m)$ case are than the sets in the $(n,m)$ case. We define in
Section~\ref{sec:proof-theor-refthm:m} (Definition~\ref{def:weakind})
a hierachy of the form
\begin{equation*}
  \textrm{QIA} = 0\textrm{-QIA} \quad\implies\quad 1\textrm{-QIA} \quad\implies\quad 2\textrm{-QIA}\quad\implies\quad 3\textrm{-QIA}\quad\implies\quad\cdots
\end{equation*}
where $w$-QIA stands for ``$w$-weak quasi-independence on average''
and $0$-QIA coincides with the usual notion defined
by~(\ref{eq:3}). Theorem~\ref{thm:stronginheritance} states that the
$(n+k,m)$ case inherits an independence that is $k$ steps stronger
than the independence enjoyed by the $(n,m)$ case. At this level of
detail, we can already say how Theorems~\ref{thm:marquee}
and~\ref{thm:generalmarquee} are proved: 
we establish that the sets involved in the $(1,1)$ case are
$2$-QIA, the sets in the $(1,2)$ case are $1$-QIA, and the sets in the
$(1,m)$ case ($m\geq 3$) are $0$-QIA (see
Proposition~\ref{prop:basecase}).  Theorem~\ref{thm:stronginheritance}
then gives us $0$-QIA for all $(n,m)$ with $nm>2$.

The full measure statement then follows by a ``mixing'' argument
exploiting the periodicity inherent in the sets $A_{n,m}(\q, B)$ (see
Lemma~\ref{lem:mixing} and Proposition~\ref{prop:fullmeasure}). This
argument enables us to overcome another perceived obstacle in our
general setting; namely, that the literature lacks a
``zero-one'' law for inhomogeneous Diophantine approximation. In the
homogeneous setting, Beresnevich and Velani  had
  previously shown that the Lebesgue measure of the set
$\cA_{n,m}^{\0}(\psi)$ is always either zero or one; that is, for
homogeneous approximation, these sets satisfy a so-called ``zero-one''
law \cite{BVzero-one}. A consequence of this was that in proving the
homogeneous Khintchine--Groshev theorem without monotonicity for
$nm>1$ in \cite{BVKG}, to obtain the full measure statement, it was
enough for Beresnevich and Velani to show that the sets
$\cA_{n,m}^{\0}(\psi)$ had \emph{positive} measure subject to the
divergence of the appropriate sum. This is a common and effective
strategy that has been used countless times in this
field. Consequently, zero-one laws are seen as indispensable tools in
homogeneous approximation, the most well-known of them being the
zero-one laws of Cassels and
Gallagher~\cite{Casselslemma,Gallagher01}. Since we know
  of no zero-one law for the general sets $\cA_{n,m}^{\y}(\psi)$, that
  strategy is unavailable to us. However, our
  Theorem~\ref{thm:marquee} implies an \emph{a posteriori} inhomogeneous zero-one
  law when $nm>2$.

\subsection{The Hausdorff measure theory} \label{sec:Hausdorff}
 
In addition to studying the Lebesgue measure of sets such as
$\calA_{n,m}^\y(\psi)$, much attention in Diophantine approximation is
also devoted to studying the more refined Hausdorff measures and
Hausdorff dimension of such sets. This is particularly fruitful when
the Lebesgue measure of such sets is zero, in which case Hausdorff
dimension and Hausdorff measures can often still provide us with a
means of differentiating the relative ``sizes'' of such sets. In
classical simultaneous approximation, these types of distinctions were
accomplished by the Jarn\'{\i}k--Besicovitch theorem and Jarn\'{\i}k's
theorem. See~\cite{beresnevich_ramirez_velani_2016} for a survey
discussion.  

We conclude this introduction by recording Hausdorff measure
counterparts to Theorems~\ref{thm:marquee}, 
\ref{thm:remaining},  and \ref{thm:generalmarquee}. These are, respectively, Theorems \ref{thm: Hausdorff marquee}, \ref{thm: Hausdorff assuming}, and \ref{thm: general Hausdorff marquee}. Theorems~\ref{thm: Hausdorff marquee} and
\ref{thm: Hausdorff assuming} can be obtained as a simple consequence of
combining the relevant Lebesgue measure statement (in our case,
Theorem \ref{thm:marquee} or \ref{thm:remaining}) with \cite[Theorem
5]{AllenBeresnevich}. The latter statement, appearing below as Theorem
AB, is essentially a general ``transference principle,'' proved by the
first author and Beresnevich, which allows us to more-or-less
immediately read off a Hausdorff measure analogue when given an
appropriate Lebesgue measure Khintchine--Groshev type
statement. Theorem AB is a consequence of the mass transference
principle for systems of linear forms \cite[Theorem
1]{AllenBeresnevich} which, informally speaking, allows us to transfer
Lebesgue measure statements for limsup sets determined by
neighbourhoods of planes (i.e. systems of linear forms) to Hausdorff
measure statements.  The mass transference principle for systems of
linear forms is just one natural generalization of the original
mass transference principle due to Beresnevich and Velani
\cite[Theorem~2]{BVMTP}.

\begin{theoremAB*}[\cite{AllenBeresnevich}] \label{thm:Lebesgue to
    Hausdorff transference} Let $\psi: \N \to \R_{\geq 0}$ be an
  approximating function, let $\y \in \I^m$, and let $f$ and
  $g: r \to g(r) := r^{-m(n-1)}f(r)$ be dimension functions such that
  $r^{-nm}f(r)$ is monotonic. Let
$$
\theta: \N \to \R_{\geq 0} \qquad\text{be defined by}\qquad \theta(q)=
q\,g\left(\frac{\psi(q)}{q}\right)^{\frac{1}{m}}\,.
$$
Then
$$
|\cA_{n,m}^{\y}(\theta)| = 1\qquad\text{implies}\qquad
\cH^f(\cA_{n,m}^{\y}(\psi)) = \cH^f(\I^{nm}).
$$
\end{theoremAB*}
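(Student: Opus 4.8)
\emph{Proof idea.} The plan is to deduce Theorem~AB from the mass transference principle for systems of linear forms, \cite[Theorem~1]{AllenBeresnevich}; once the sets are set up correctly, the implication becomes a matter of matching parameters, and this matching is exactly what the definition of $\theta$ encodes. First I would realise $\cA_{n,m}^{\y}(\psi)$ as a $\limsup$ set of neighbourhoods of rational affine subspaces. For $\q\in\Z^n\setminus\{\0\}$ and $\p\in\Z^m$ the equations $\q\x+\p=\y$ involve pairwise disjoint blocks of the coordinates of $\x$ (the $j$th equation sees only $x_{1j},\dots,x_{nj}$), so they always cut out an affine subspace $R(\p,\q)$ of dimension $\ell:=m(n-1)$ and codimension $m$. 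The condition $\q\x+\p\in B_{\abs{\q}}$, with $B_q$ the ball of radius $\psi(q)$ centred at $\y$, then describes, up to an absolute constant affecting only the width, the set of $\x$ lying within distance $\asymp\psi(\abs{\q})/\abs{\q}$ of $R(\p,\q)$ in each of the $m$ transversal coordinate blocks --- precisely the ``product of slabs'' geometry for which the linear-forms mass transference principle is designed. Thus $\cA_{n,m}^{\y}(\psi)$ is the $\limsup$, as $\abs{\q}\to\infty$, of the $\Z^m$-periodic families $\bigcup_{\p\in\Z^m}\Delta\!\left(R(\p,\q),\,\psi(\abs{\q})/\abs{\q}\right)$, and similarly with $\theta$ in place of $\psi$.

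Next I would identify $\theta$ with the ``fattening'' built into the principle. In \cite[Theorem~1]{AllenBeresnevich} a $\limsup$ of $\delta$-neighbourhoods of $\ell$-planes in $\RR^{nm}$ is compared, for a dimension function $f$ with $g(r):=r^{-\ell}f(r)$ also a dimension function, with the $\limsup$ of the fattened neighbourhoods of width $\Upsilon(\delta):=g(\delta)^{1/m}$, the exponent $1/m$ being the reciprocal of the codimension. In our situation the relevant width is $\delta(\q)=\psi(\abs{\q})/\abs{\q}$, and $\Upsilon(\delta(\q))=g\!\left(\psi(\abs{\q})/\abs{\q}\right)^{1/m}=\theta(\abs{\q})/\abs{\q}$, by the very definition of $\theta$ in the statement; so the fattened $\limsup$ set is exactly $\cA_{n,m}^{\y}(\theta)$. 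The side hypotheses of the principle --- that $f$ and $g$ be dimension functions, and that $r\mapsto r^{-nm}f(r)=\parens*{\Upsilon(r)/r}^{m}$ be monotonic --- are precisely those imposed in Theorem~AB, so the principle applies and delivers the implication ``$|\cA_{n,m}^{\y}(\theta)|=1 \Rightarrow \cH^f(\cA_{n,m}^{\y}(\psi))=\cH^f(\I^{nm})$'', which is the claim. (It is reassuring that in the degenerate case $f(r)=r^{nm}$ one gets $g(r)=r^{m}$ and $\theta=\psi$, so the statement reduces to the tautology ``full Lebesgue measure $\Rightarrow$ full Lebesgue measure''.)

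The part that will need genuine care is the first step rather than any estimate: one must be precise about the geometry of the sets $A_{n,m}(\q,B)$ --- that ``$\q\x+\p\in B$'' carves out a product of affine slabs (one in each coordinate block $\x_{\cdot j}\in\RR^n$) rather than a round tube, and that for each $\q$ one is really handling the full $\Z^m$-periodic family of parallel planes --- and then confirm that \cite[Theorem~1]{AllenBeresnevich} is formulated so as to accommodate exactly this structure and normalisation, keeping the factor $\abs{\q}$ that relates ball radius to slab width consistent throughout. Once that setup is in place there is no further obstacle: the identity $\theta(q)/q=\Upsilon(\psi(q)/q)$ is immediate from the definitions of $g$, $\Upsilon$, and $\theta$, and the conclusion is read off from the principle. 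An alternative, self-contained route would be to build a Cantor-type subset of $\cA_{n,m}^{\y}(\psi)$ together with a mass distribution supported on it, using the full measure of $\cA_{n,m}^{\y}(\theta)$ to guarantee enough ``room'' at each scale --- but this is precisely the content internal to the linear-forms mass transference principle, so invoking \cite[Theorem~1]{AllenBeresnevich} is the economical path.
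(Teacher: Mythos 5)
Your proposal is correct and follows exactly the route the paper itself indicates: the paper does not prove Theorem AB but quotes it as \cite[Theorem~5]{AllenBeresnevich}, noting that it is a consequence of the mass transference principle for systems of linear forms \cite[Theorem~1]{AllenBeresnevich}, which is precisely the derivation you sketch (realising $\cA_{n,m}^{\y}(\psi)$ as a limsup of slab-neighbourhoods of codimension-$m$ rational planes and matching $\theta(q)/q$ with the fattening $g(\psi(q)/q)^{1/m}$). Your parameter matching and the sanity check $f(r)=r^{nm}\Rightarrow\theta=\psi$ are both right.
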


Before presenting our Hausdorff measure results, we recall some
definitions. We will say that a function $f: \R_{>0} \to \R_{>0}$ is a
\emph{dimension function} if $f$ is continuous, non-decreasing, and
$f(r) \to 0$ as $r \to 0$. Given a set $F \subset \R^k$ and a real
number $\rho > 0$, we will say that a countable collection of balls,
say $(B_i:=B(x_i,r_i))_{i \in \N}$, is a \emph{$\rho$-cover} for $F$
if $F \subset \bigcup_{i=1}^{\infty}{B_i}$ and $r_i<\rho$ for all
$i \in \N$. The \emph{$\rho$-approximate Hausdorff $f$-measure} of $F$
is defined as
\[\cH_{\rho}^{f}(F):= \inf\left\{\sum_{i=1}^{\infty}{f(r)}: \{B_i\}_{i \in \N} \text{ is a $\rho$-cover for $F$}\right\}.\]
The \emph{Hausdorff $f$-measure} of $F$ is then defined to be
\[\cH^{f}(F):= \lim_{\rho \to 0}{\cH_{\rho}^{f}(F)} = \sup_{\rho > 0}{\cH_{\rho}^{f}(F)}.\]
In the case that $f(r)=r^s$ for some real $s \geq 0$, $\cH^f$ is the
perhaps more familiar \emph{Hausdorff $s$-measure}. In this case we
write $\cH^s$ in place of $\cH^f$. Although we will not require this
definition here, for completeness we remark that the \emph{Hausdorff
  dimension} of $F$ is defined as
\[\dimh{F} := \inf\set{s \geq 0: \cH^s(F) = 0}.\] 
For further details on Hausdorff measures and Hausdorff dimension, we refer the reader to, for example, \cite{Falconer, Mattila, Rogers}. 

Combining Theorem \ref{thm:marquee} with Theorem AB yields the
following Hausdorff measure analogue of Theorem
\ref{thm:marquee}. This statement is essentially an inhomogeneous \emph{Hausdorff
  measure Khintchine--Groshev theorem}.

\begin{theorem} \label{thm: Hausdorff marquee} 
  Let $nm>2$. Let $\y \in \I^{m}$, and let
  $\psi: \N \to \R_{\geq 0}$ be an approximating function. Let $f$ and
  $g: r \to g(r):=r^{-m(n-1)}f(r)$ be dimension functions such that
  $r^{-nm}f(r)$ is monotonic. Then,
$$
\cH^f(\cA_{n,m}^{\y}(\psi)) =\left\{
\begin{array}{lcl}
 \displaystyle 0 \quad & \mbox{ \text{if}\quad
 $\sum_{q=1}^{\infty}{q^{n+m-1}g\left(\frac{\psi(q)}{q}\right)} < \infty$\;,}\\[2ex]
 \displaystyle \cH^f(\mathbb{I}^{nm}) \quad & \mbox{ \text{if}\quad
 $\sum_{q=1}^{\infty}{q^{n+m-1}g\left(\frac{\psi(q)}{q}\right)} = \infty$\;.}
\end{array}
\right.
$$
\end{theorem}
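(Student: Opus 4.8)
The plan is to prove the two halves of the statement by completely different means: the divergence case will be immediate from Theorem~\ref{thm:marquee} together with Theorem~AB, whereas the convergence case is a direct covering argument of ``Hausdorff--Cantelli'' type which makes no use of the monotonicity of $r^{-nm}f(r)$.

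For the divergence case, set $\theta(q) := q\,g\!\left(\psi(q)/q\right)^{1/m}$, which is exactly the function appearing in Theorem~AB. Since $f(r) = r^{m(n-1)}g(r)$ by the definition of $g$, one has the identity $q^{n-1}\theta(q)^m = q^{n+m-1}g(\psi(q)/q)$, so the divergence hypothesis $\sum_q q^{n+m-1}g(\psi(q)/q) = \infty$ is literally the statement $\sum_q q^{n-1}\theta(q)^m = \infty$. As $nm > 2$, Theorem~\ref{thm:marquee} applied to the approximating function $\theta$ gives $|\cA_{n,m}^{\y}(\theta)| = 1$; feeding this into Theorem~AB (all of whose hypotheses on $f$ and $g$ are among the present ones) yields $\cH^f(\cA_{n,m}^{\y}(\psi)) = \cH^f(\I^{nm})$, as desired.

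For the convergence case, suppose $\sum_q q^{n+m-1}g(\psi(q)/q) < \infty$. Then the general term tends to $0$, hence $g(\psi(q)/q) \to 0$, and since $g$ is a dimension function this forces $r_q := \psi(q)/q \to 0$; in particular $\psi(q) < q$ for all large $q$. Now $\cA_{n,m}^{\y}(\psi)$ is the $\limsup$ of the sets $A_{n,m}(\q, B(\y,\psi(|\q|)))$, so for every $N$ it is contained in $\bigcup_{q \geq N}\bigcup_{|\q| = q} A_{n,m}(\q, B(\y,\psi(q)))$. For fixed $\q$ with $|\q| = q$, the set $A_{n,m}(\q, B(\y,\psi(q)))$ is a union over the $O(q^m)$ admissible $\p \in \ZZ^m$ of ``boxes'' which are products of $m$ slabs of width $\asymp \psi(q)/q$ inside $\I^{nm}$, and each such box is covered by $\ll (q/\psi(q))^{m(n-1)}$ balls of radius $r_q$. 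Since there are $\asymp q^{n-1}$ lattice points $\q$ with $|\q| = q$, the level-$q$ part of $\cA_{n,m}^{\y}(\psi)$ is covered by $\ll q^{n+m-1}(q/\psi(q))^{m(n-1)}$ balls of radius $r_q$, whose total $f$-cost is $\ll q^{n+m-1}(q/\psi(q))^{m(n-1)}f(r_q) = q^{n+m-1}g(\psi(q)/q)$, using $f(r_q) = r_q^{m(n-1)}g(r_q)$. Hence, with $\rho_N := \sup_{q \geq N} r_q$,
\[
  \cH^f_{\rho_N}\!\left(\cA_{n,m}^{\y}(\psi)\right) \ll \sum_{q \geq N} q^{n+m-1}g\!\left(\psi(q)/q\right),
\]
and since $\rho_N \to 0$ and the right-hand side is the tail of a convergent series, letting $N \to \infty$ gives $\cH^f(\cA_{n,m}^{\y}(\psi)) = 0$.

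I do not expect any genuine obstacle: all the substance is carried by Theorem~\ref{thm:marquee} and Theorem~AB. The points needing a little care are purely bookkeeping ones: confirming the exact matching of the two divergence series, verifying the covering count $q^{n+m-1}(q/\psi(q))^{m(n-1)}$ with implied constants depending only on $n$ and $m$, and disposing of degenerate situations (for instance $\psi(q) \geq q$ for infinitely many $q$, which cannot occur under the convergence hypothesis, or $g$ vanishing near $0$, which renders the statement trivial).
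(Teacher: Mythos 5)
Your proposal is correct and follows essentially the same route as the paper: the divergence half is exactly the paper's argument (apply Theorem~\ref{thm:marquee} to $\theta(q)=q\,g(\psi(q)/q)^{1/m}$ and feed the resulting full-measure statement into Theorem~AB), and the convergence half is the standard Hausdorff--Cantelli covering argument that the paper invokes but omits, citing \cite{AllenThesis}. Your explicit covering count $\ll q^{n+m-1}(q/\psi(q))^{m(n-1)}$ balls of radius $\psi(q)/q$ with $f$-cost $\ll q^{n+m-1}g(\psi(q)/q)$ is the right bookkeeping, so no issues.
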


\begin{proof}
Analogously to the Lebesgue measure theory, the convergence part of Theorem \ref{thm: Hausdorff marquee} follows from a standard covering argument, and the statement is true in this case for any $n,m \geq 1$ with no monotonicity requirements on $\psi$. We omit the details here but note that the required argument is given very explicitly in \cite{AllenThesis}. 

The divergence part of the above statement would follow immediately
from Theorem~AB provided that we could show that
$\abs{\cA_{n,m}^{\y}(\theta)}=1$ for
$\theta(q)=qg\left(\frac{\psi(q)}{q}\right)^{\frac{1}{m}}$. Provided
that $nm>2$, it follows from
Theorem \ref{thm:marquee} that $\abs{\cA_{n,m}^{\y}(\theta)}=1$ if
\[\sum_{q=1}^{\infty}{q^{n-1}\theta(q)^m} = \sum_{q=1}^{\infty}{q^{n+m-1}g\left(\frac{\psi(q)}{q}\right)} = \infty,\]
thus completing the proof.
\end{proof}

\begin{remark*}
  The proof of Theorem \ref{thm: Hausdorff marquee}, both the
  convergence and divergence parts, is virtually identical to the
  proof given of Theorem 3.7 in \cite{AllenThesis}.  In all cases
  except when $n=1$ or $n=2$, Theorem \ref{thm: Hausdorff marquee} was
  already known to be true without any monotonicity assumptions on
  $\psi$, see \cite[Theorem 3]{AllenBeresnevich} (which also appears
  as \cite[Theorem 3.7]{AllenThesis}). In the case that $n=1$ or
  $n=2$, the conclusion of Theorem \ref{thm: Hausdorff marquee} was
  shown to be true in \cite[Theorem 3]{AllenBeresnevich} subject to
  additional monotonicity assumptions.  We refer the reader to
  \cite{AllenThesis, AllenBeresnevich} for detailed discussion of the
  precise requirements, as well as for more details on the mass
  transference principle for linear forms.
\end{remark*}

In line with how Theorem~\ref{thm:marquee} demonstrates that
monotonicity is not required in the classical inhomogeneous
Khintchine--Groshev theorem whenever $nm>2$, Theorem \ref{thm: Hausdorff marquee} shows that the same is true in
the analogous inhomogeneous Hausdorff measure Khintchine--Groshev theorem. Moreover, any further progress towards removing monotonicity from the Lebesgue measure inhomogeneous Khintchine--Groshev theorem in the remaining $(1,2)$ and $(2,1)$ cases
can be transferred to analogous progress in the
Hausdorff measure setting via Theorem AB.  For example, in the same
way that we can deduce the divergence part of Theorem \ref{thm:
  Hausdorff marquee} from Theorem \ref{thm:marquee} using Theorem AB,
we can deduce the following Hausdorff measure analogue of
Theorem~\ref{thm:remaining}. We leave the details as an exercise for
the interested reader!

\begin{theorem}\label{thm: Hausdorff assuming}
  Let $\varepsilon > 0$ and let $nm=2$. Let $f$ and
  $g: r \to g(r):=r^{-m(n-1)}f(r)$ be dimension functions such that
  $r^{-nm}f(r)$ is monotonic.
 Then, for any $\psi: \N \to \R_{\geq 0}$, and any $\y \in \I^m$, we have
  \begin{equation*}
    \cH^f\left(\cA_{n,m}^{\y}(\psi)\right) = \cH^f(\I^{nm}) \qquad\textrm{if}\qquad \sum_{q=1}^{\infty}{\left(\frac{\varphi(q)}{q}\right)^{1+\varepsilon} q^{n+m-1} g\left(\frac{\psi(q)}{q}\right)} = \infty.
  \end{equation*}
\end{theorem}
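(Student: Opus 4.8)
The plan is to deduce Theorem~\ref{thm: Hausdorff assuming} from Theorem~\ref{thm:remaining} in exactly the same way that Theorem~\ref{thm: Hausdorff marquee} was deduced from Theorem~\ref{thm:marquee}, namely by invoking Theorem~AB with the same choice of auxiliary approximating function $\theta$. First I would set $\theta(q) = q\,g\!\left(\frac{\psi(q)}{q}\right)^{1/m}$, so that the hypotheses of Theorem~AB are met: we are given that $f$ and $g\colon r\mapsto r^{-m(n-1)}f(r)$ are dimension functions and that $r^{-nm}f(r)$ is monotonic, which is precisely what Theorem~AB requires. Then Theorem~AB tells us that $\abs{\cA_{n,m}^{\y}(\theta)} = 1$ implies $\cH^f(\cA_{n,m}^{\y}(\psi)) = \cH^f(\I^{nm})$, so the whole matter reduces to establishing the Lebesgue full-measure statement for $\theta$ under the stated divergence hypothesis.

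Next I would split into the two cases $nm=2$. For $(n,m) = (2,1)$: here $\theta(q) = q\,g(\psi(q)/q)$, and Theorem~\ref{thm:remaining} (first part) gives $\abs{\cA_{2,1}^{y}(\theta)} = 1$ provided $\sum_q (\varphi(q)/q)^{1+\eps}\, q\,\theta(q) = \infty$. But $q\,\theta(q) = q^2 g(\psi(q)/q) = q^{n+m-1}g(\psi(q)/q)$ since $n+m-1 = 2$, so this divergence condition is exactly the hypothesis of Theorem~\ref{thm: Hausdorff assuming} in this case. For $(n,m) = (1,2)$: here $\theta(q) = q\,g(\psi(q)/q)^{1/2}$, so $\theta(q)^2 = q^2 g(\psi(q)/q)$, and Theorem~\ref{thm:remaining} (second part) gives $\abs{\cA_{1,2}^{\y}(\theta)} = 1$ provided $\sum_q (\varphi(q)/q)^{1+\eps}\,\theta(q)^2 = \sum_q (\varphi(q)/q)^{1+\eps}\, q^2 g(\psi(q)/q) = \infty$; and again $q^2 = q^{n+m-1}$ since now $n+m-1 = 2$ as well, so this matches the hypothesis. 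In both cases the conclusion $\abs{\cA_{n,m}^{\y}(\theta)} = 1$ feeds into Theorem~AB to give $\cH^f(\cA_{n,m}^{\y}(\psi)) = \cH^f(\I^{nm})$.

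There is essentially no obstacle here; the entire content is bookkeeping to verify that the index $n+m-1$ in the hypothesis of Theorem~\ref{thm: Hausdorff assuming} matches the exponent produced by substituting $\theta$ into the divergence series of Theorem~\ref{thm:remaining}, together with checking that the monotonicity requirement on $r^{-nm}f(r)$ is the one demanded by Theorem~AB. The only mild care needed is to note that when $\psi$ vanishes on some subsequence the corresponding terms $g(\psi(q)/q)$ vanish too (since $g$ is a dimension function with $g(r)\to 0$ as $r\to 0$, interpreted suitably at $0$), so those terms contribute nothing to either series and the equivalence of the two divergence conditions is unaffected. Thus the proof is a direct transference and, as the authors indicate, the details can reasonably be left to the reader.
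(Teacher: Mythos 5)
Your proposal is correct and is exactly the argument the paper intends: the authors explicitly state that Theorem~\ref{thm: Hausdorff assuming} is deduced from Theorem~\ref{thm:remaining} via Theorem~AB in the same way that Theorem~\ref{thm: Hausdorff marquee} is deduced from Theorem~\ref{thm:marquee}, and they leave the details (which amount to the bookkeeping with $\theta(q)=q\,g(\psi(q)/q)^{1/m}$ that you carry out) as an exercise. Your verification that the divergence conditions match in both cases $(2,1)$ and $(1,2)$ is accurate.
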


\begin{remark*}
  Notice that the condition $nm=2$ only permits the cases $(1,2)$ and
  $(2,1)$.
\end{remark*}

Via a more direct application of the mass transference principle for
systems of linear forms \cite[Theorem 1]{AllenBeresnevich}, it is also
possible to prove a more general Hausdorff measure statement analogous
to the Lebesgue measure statement given by Theorem
\ref{thm:generalmarquee}. More precisely, recall that if
$\Psi:= (B_q)_{q \in \N}$ is a sequence of balls in $\R^m/\Z^m$ and
$n \geq 1$, we denote by $\cA_{n,m}(\Psi)$ the set of $\x \in \I^{nm}$
such that
\[\q\x+\p \in B_{|\q|}\] for infinitely many pairs
$(\p,\q) \in \Z^m \times \Z^n$. By adapting the proof
of \cite[Theorem~2]{AllenBeresnevich} (specifically the proof of (11)
in \cite{AllenBeresnevich}), one can apply \cite[Theorem
1]{AllenBeresnevich} directly to obtain the following Hausdorff
measure analogue of Theorem~\ref{thm:generalmarquee}.

\begin{theorem}\label{thm: general Hausdorff marquee}
Let $nm > 2$.
Let $f$ and $g: r \to g(r) = r^{-m(n-1)}f(r)$ be dimension functions such that $r^{-nm}f(r)$ is monotonic. Then,
$$
\cH^f(\cA_{n,m}(\Psi)) =\left\{
\begin{array}{lcl}
 \displaystyle 0 \quad & \mbox{ \text{if}\quad
 $\sum_{q=1}^{\infty}{q^{n+m-1}g\left(\frac{|B_q|^{\frac{1}{m}}}{2q}\right)} < \infty$,}\\[2ex]
 \displaystyle \cH^f(\mathbb{I}^{nm}) \quad & \mbox{ \text{if}\quad
 $\sum_{q=1}^{\infty}{q^{n+m-1}g\left(\frac{|B_q|^{\frac{1}{m}}}{2q}\right)} = \infty$.}
\end{array}
\right.
$$
\end{theorem}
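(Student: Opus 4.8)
The plan is to obtain this statement as a corollary of Theorem~\ref{thm:generalmarquee} together with the mass transference principle for systems of linear forms \cite[Theorem~1]{AllenBeresnevich}, running the argument that yields Theorem~AB but now for an arbitrary, not necessarily concentric, sequence of balls $\Psi=(B_q)_{q\in\N}$. Write $c_q$ for the centre of $B_q$, and note that a ball of volume $|B_q|$ in $\R^m/\Z^m$ has maximum-norm radius $|B_q|^{1/m}/2$. For $\q\in\Z^n$ and $\p\in\Z^m$, let $L_{\p,\q}\subset\I^{nm}$ be the $m(n-1)$-dimensional plane $\set{\x : \q\x+\p=c_{|\q|}}$, which has codimension $m$; then the set $\set{\x : \q\x+\p\in B_{|\q|}}$ is, up to constants depending only on $n$ and $m$, the $\beta_{|\q|}$-neighbourhood $\Delta(L_{\p,\q},\beta_{|\q|})$ of $L_{\p,\q}$, where
\[
  \beta_q := \frac{|B_q|^{1/m}}{2q}.
\]
Consequently, up to a constant dilation of the neighbourhood radii, $\cA_{n,m}(\Psi)=\limsup_{|\q|\to\infty}\Delta(L_{\p,\q},\beta_{|\q|})$ is exactly the type of limsup set of neighbourhoods of rational planes to which \cite[Theorem~1]{AllenBeresnevich} applies.

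The convergence part follows from the usual covering estimate: for fixed $q$ there are $\ll q^{n-1}$ lattice points $\q$ with $|\q|=q$ and $\ll q^m$ vectors $\p$ for which $L_{\p,\q}$ meets $\I^{nm}$, and the $\beta_q$-neighbourhood of each such plane is covered by $\ll\beta_q^{-m(n-1)}$ balls of radius $\beta_q$, whence, for every $N$,
\[
  \cH^f\bigl(\cA_{n,m}(\Psi)\bigr) \ll \sum_{q\ge N} q^{n-1}\cdot q^m\cdot \beta_q^{-m(n-1)}f(\beta_q) = \sum_{q\ge N} q^{n+m-1}\, g(\beta_q)
\]
(here the monotonicity of $r^{-nm}f(r)$ plays its usual role in making the covering bound rigorous at scales below $\beta_q$). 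Under the stated convergence hypothesis the right-hand side tends to $0$, so $\cH^f(\cA_{n,m}(\Psi))=0$. This is the non-concentric analogue of the computation around \cite[(11)]{AllenBeresnevich}.

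For the divergence part, \cite[Theorem~1]{AllenBeresnevich} tells us that $\cH^f(\cA_{n,m}(\Psi))=\cH^f(\I^{nm})$ as soon as the ``$g$-dilated'' limsup set $\limsup_{|\q|\to\infty}\Delta\bigl(L_{\p,\q},g(\beta_{|\q|})^{1/m}\bigr)$ has full Lebesgue measure. Translating the dilation back into the language of balls, this dilated set is precisely $\cA_{n,m}(\tilde\Psi)$ for the sequence $\tilde\Psi=(\tilde B_q)$ of balls concentric with the $B_q$ and satisfying $|\tilde B_q|\asymp q^m g(\beta_q)$ (interpreting $\tilde B_q$ as the whole torus when $q^m g(\beta_q)$ exceeds $1$). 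Since $nm>2$, Theorem~\ref{thm:generalmarquee} applies to $\tilde\Psi$ and gives $|\cA_{n,m}(\tilde\Psi)|=1$ precisely when
\[
  \sum_{q=1}^\infty q^{n-1}|\tilde B_q| \asymp \sum_{q=1}^\infty q^{n+m-1} g(\beta_q) = \infty,
\]
which is the hypothesis of the theorem. Feeding this back through \cite[Theorem~1]{AllenBeresnevich} yields $\cH^f(\cA_{n,m}(\Psi))=\cH^f(\I^{nm})$.

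The only real work here is the non-concentric bookkeeping: one must reproduce the passage of \cite[Theorem~2]{AllenBeresnevich}---in particular the identification in \cite[(11)]{AllenBeresnevich} of the plane-neighbourhood radius $\psi(q)/q$ with the ambient ball $B_q$---for a general sequence $(B_q)$, tracking the $|\q|$-factors and the factor $2$ coming from maximum-norm balls, and verifying that the hypotheses on $f$ and $g$ (both dimension functions, $r^{-nm}f(r)$ monotonic) are precisely those needed to invoke \cite[Theorem~1]{AllenBeresnevich}. I do not anticipate any genuinely new difficulty, since the mass transference principle for linear forms is already formulated for arbitrary sequences of planes and radii; the only care required is routine---handling degenerate ranges of $q$ (where $\tilde B_q$ overflows the torus, making $\cA_{n,m}(\tilde\Psi)$ trivially of full measure, or where $\beta_q\not\to0$) by the standard truncation.
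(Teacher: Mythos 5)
Your proposal is correct and follows exactly the route the paper indicates: a direct application of the mass transference principle for systems of linear forms \cite[Theorem~1]{AllenBeresnevich}, adapting the computation around (11) in the proof of \cite[Theorem~2]{AllenBeresnevich} to non-concentric balls, with the divergence hypothesis feeding into Theorem~\ref{thm:generalmarquee} and the convergence part handled by the standard covering argument. The paper only sketches this; your write-up supplies the same bookkeeping (the radius $\beta_q=|B_q|^{1/m}/(2q)$, the counts $q^{n-1}$ and $q^m$, and the dilated balls $\tilde B_q$ with $|\tilde B_q|\asymp q^m g(\beta_q)$) that the sketch presupposes.
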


\begin{remark*}
Observe that $\frac{|B_q|^{\frac{1}{m}}}{2q}$ is simply $\frac{r(B_q)}{q}$, where $r(B_q)$ is the radius of the ball $B_q$.
\end{remark*}

\section{Lemmas}\label{sec:lemmas}

The first three lemmas are standard facts from measure theory. Recall
that the \emph{limsup} of a sequence of measurable sets
$(A_q)_{q \in \N}$ in a finite measure space $(X, \mu)$ is
\begin{align*}
\limsup_{q \to \infty}{A_q} :&= \left\{x \in X: x \in A_q \text{ for infinitely many $q \in \N$}\right\} \\
                             &= \bigcap_{Q \geq 1}{\bigcup_{q \geq Q}{A_q}}.
\end{align*}

\begin{lemma}[Divergence Borel--Cantelli Lemma,~{\cite[Lemma 2.3]{Harman}}]\label{lem:borelcantelli}
  Suppose $(X, \mu)$ is a finite measure space and $(A_q)_{q \in \N} \subset X$ is a
  sequence of measurable subsets such that $\sum \mu(A_q)=\infty$. Then
  \begin{equation*}
    \mu\parens*{\limsup_{q\to\infty} A_q} \geq \limsup_{Q\to\infty} \frac{\parens*{\sum_{q=1}^Q \mu(A_q)}^2}{\sum_{q,r=1}^Q\mu(A_q\cap A_r)}.
  \end{equation*}
\end{lemma}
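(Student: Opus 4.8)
The plan is to reduce the statement to the Cauchy--Schwarz inequality applied to truncated counting functions, and then to recover the limsup set by passing to tails. First I would fix $Q$ and introduce $f_Q := \sum_{q=1}^{Q}\bone_{A_q}$, which vanishes off $E_Q := \bigcup_{q=1}^{Q}A_q$ and satisfies $\int_X f_Q\,d\mu = \sum_{q=1}^{Q}\mu(A_q)$ and $\int_X f_Q^2\,d\mu = \sum_{q,r=1}^{Q}\mu(A_q\cap A_r)$. Applying the Cauchy--Schwarz inequality to $f_Q\cdot\bone_{E_Q}$ gives $\bigl(\int_X f_Q\,d\mu\bigr)^2 \le \mu(E_Q)\int_X f_Q^2\,d\mu$, hence
\[
  \mu(E_Q)\ \ge\ \frac{\bigl(\sum_{q=1}^{Q}\mu(A_q)\bigr)^2}{\sum_{q,r=1}^{Q}\mu(A_q\cap A_r)}\,.
\]

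Next I would bridge the gap between $\mu(E_Q)$, which measures a union, and $\mu(\limsup_q A_q)$. The idea is to run the same argument on tail sequences. For each fixed $N\ge 1$ and each $Q\ge N$, the identical computation with $f_{N,Q}:=\sum_{q=N}^{Q}\bone_{A_q}$ yields $\mu\bigl(\bigcup_{q=N}^{Q}A_q\bigr)\ge \bigl(\sum_{q=N}^{Q}\mu(A_q)\bigr)^2\big/\sum_{q,r=N}^{Q}\mu(A_q\cap A_r)$. Since deleting the finitely many indices $1,\dots,N-1$ alters the numerator sum by a bounded amount while $\sum_q \mu(A_q)=\infty$, we have $\sum_{q=N}^{Q}\mu(A_q)\sim\sum_{q=1}^{Q}\mu(A_q)$ as $Q\to\infty$; and the tail denominator is dominated by $\sum_{q,r=1}^{Q}\mu(A_q\cap A_r)$. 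Combining these two observations shows that $\limsup_{Q\to\infty}$ of the tail ratio is at least $R:=\limsup_{Q\to\infty}\bigl(\sum_{q=1}^{Q}\mu(A_q)\bigr)^2\big/\sum_{q,r=1}^{Q}\mu(A_q\cap A_r)$. Letting $Q\to\infty$ and using continuity of $\mu$ from below gives $\mu\bigl(\bigcup_{q\ge N}A_q\bigr)\ge R$ for every $N$.

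Finally, since $\mu$ is a \emph{finite} measure, continuity from above applies to the nested sequence $\bigcup_{q\ge N}A_q$, whose intersection over $N$ is precisely $\limsup_q A_q$; therefore $\mu(\limsup_q A_q)=\lim_{N\to\infty}\mu\bigl(\bigcup_{q\ge N}A_q\bigr)\ge R$, as claimed. I do not expect a genuine obstacle here — this is the classical divergence Borel--Cantelli lemma — but the one point requiring a little care is the passage from finite unions to the limsup set via tails, together with the remark that truncating the index range does not change the relevant limsup because the mass sum diverges.
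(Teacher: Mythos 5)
Your proof is correct: the Cauchy--Schwarz step is exactly the Chung--Erd\H{o}s inequality (stated in the paper as Lemma~\ref{lem:erdoschung}), and applying it to the tails $\bigcup_{q\ge N}A_q$, using divergence of $\sum\mu(A_q)$ to discard the first $N-1$ terms, and finishing with continuity from above of the finite measure is the standard argument. The paper itself gives no proof --- it cites Harman's Lemma~2.3 --- and your route is essentially the one found in that reference, so there is nothing to compare beyond noting the agreement.
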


\begin{lemma}[Chung--Erd\H{o}s Lemma,~{\cite[Equation 4]{ChungErdos}}]\label{lem:erdoschung}
  Suppose $(X, \mu)$ is a probability space and $(A_q)_{q \in \N}\subset X$ is a
  sequence of measurable subsets. If $\mu\parens*{\bigcup_{q=1}^Q  A_q}>0$, then
  \begin{equation*}
    \mu\parens*{\bigcup_{q=1}^Q  A_q} \geq \frac{\parens*{\sum_{q=1}^Q \mu(A_q)}^2}{\sum_{q,r=1}^Q\mu(A_q\cap A_r)}.
  \end{equation*}
\end{lemma}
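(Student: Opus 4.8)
The plan is to deduce the inequality from the Cauchy--Schwarz inequality applied to the counting function $f := \sum_{q=1}^Q \bone_{A_q}$, where $\bone_E$ denotes the indicator function of a measurable set $E$. First I would record the two identities
\[
\int_X f \, d\mu = \sum_{q=1}^Q \mu(A_q), \qquad \int_X f^2\, d\mu = \sum_{q,r=1}^Q \mu(A_q \cap A_r),
\]
the first following from linearity of the integral and the second by expanding the square and using $\bone_{A_q}\bone_{A_r} = \bone_{A_q \cap A_r}$ together with the fact that $\mu$ is a probability (hence finite) measure, so everything in sight is integrable.

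The key observation is that $f$ vanishes identically off the set $U := \bigcup_{q=1}^Q A_q$, so $f = f\bone_U$ pointwise. Applying the Cauchy--Schwarz inequality to $f$ and $\bone_U$ then yields
\[
\parens*{\sum_{q=1}^Q \mu(A_q)}^2 = \parens*{\int_X f\bone_U \, d\mu}^2 \le \parens*{\int_X f^2 \, d\mu}\parens*{\int_X \bone_U \, d\mu} = \parens*{\sum_{q,r=1}^Q \mu(A_q\cap A_r)}\,\mu(U).
\]
Rearranging gives exactly the asserted bound, once one checks that the denominator is strictly positive so that the division is legitimate: since $\mu(U)>0$ by hypothesis, at least one $\mu(A_q)$ is positive, and then $\sum_{q,r=1}^Q \mu(A_q\cap A_r) \ge \mu(A_q\cap A_q) = \mu(A_q) > 0$.

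Honestly there is no real obstacle in this argument; the only point meriting a moment's attention is the positivity of the denominator just noted, which is an immediate consequence of the standing assumption $\mu\parens*{\bigcup_{q=1}^Q A_q}>0$. Everything else is a one-line application of Cauchy--Schwarz to the indicator sum, and the same computation works verbatim on any finite measure space after normalising, which is why the companion divergence Borel--Cantelli statement (Lemma~\ref{lem:borelcantelli}) follows by letting $Q\to\infty$.
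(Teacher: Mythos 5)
Your proof is correct: the Cauchy--Schwarz argument applied to $f=\sum_{q=1}^Q\bone_{A_q}$ against $\bone_U$ is the standard derivation of the Chung--Erd\H{o}s inequality, and you correctly handle the positivity of the denominator. The paper does not prove this lemma itself (it is quoted directly from the cited reference), so there is nothing to compare against; your argument is a complete and self-contained justification of the stated inequality. (Your closing aside that Lemma~\ref{lem:borelcantelli} ``follows by letting $Q\to\infty$'' is a slight oversimplification --- one must apply the inequality to tails $\bigcup_{q\ge Q}A_q$ and use continuity of measure --- but that is not part of the statement you were asked to prove.)
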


\begin{lemma}[{\cite[Lemma 6]{BDV}}]\label{lem:lebesguedensity}
  Let $(X,d)$ be a metric space with a finite measure $\mu$ such that
  every open set is $\mu$-measurable. Let $A$ be a Borel subset of $X$
  and let $f:\RR_{\geq 0}\to\RR_{\geq 0}$ be an increasing function with $f(x)\to 0$ as
  $x\to 0$. If for every open set $U\subset X$ we have
  \begin{equation*}
    \mu(A\cap U) \geq f(\mu(U)),
  \end{equation*}
  then $\mu(A) = \mu(X)$.
\end{lemma}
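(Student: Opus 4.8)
The plan is to argue by contradiction, exploiting the outer regularity that every finite Borel measure on a metric space automatically enjoys. Suppose $\mu(A) < \mu(X)$ and put $\delta := \mu(X \setminus A) > 0$; the aim is to produce a single open set $U$ on which the hypothesis $\mu(A \cap U) \geq f(\mu(U))$ fails.

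First I would record the standard regularity fact that, for every Borel set $B \subseteq X$, one has $\mu(B) = \inf\set*{\mu(U) : B \subseteq U,\ U \text{ open}}$. This follows from the good-sets principle: the family of Borel sets approximable in measure from outside by open sets (and from inside by closed sets) is a $\sigma$-algebra, and it contains every closed set $F$, since $F = \bigcap_{j \geq 1}\set*{x \in X : d(x,F) < 1/j}$ is a decreasing intersection of open sets and continuity from above --- valid here because $\mu(X) < \infty$ --- gives $\mu(\set*{x : d(x,F) < 1/j}) \to \mu(F)$. As the Borel $\sigma$-algebra is generated by the closed sets, outer regularity holds throughout.

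Next, I would fix $\varepsilon > 0$ and apply outer regularity to the Borel set $X \setminus A$ to obtain an open $U$ with $X \setminus A \subseteq U$ and $\mu(U) \leq \delta + \varepsilon$. From $X \setminus A \subseteq U$ we get both $\mu(U) \geq \delta$ and $U \setminus A \supseteq X \setminus A$, so that $\mu(A \cap U) = \mu(U) - \mu(U \setminus A) \leq (\delta + \varepsilon) - \delta = \varepsilon$. On the other hand, the hypothesis together with the monotonicity of $f$ and $\mu(U) \geq \delta$ yields $\mu(A \cap U) \geq f(\mu(U)) \geq f(\delta)$. Hence $f(\delta) \leq \varepsilon$ for every $\varepsilon > 0$, forcing $f(\delta) = 0$; but $f(\delta) > 0$ because $f$ is increasing and non-negative (for instance $f(\delta) > f(\delta/2) \geq 0$), a contradiction. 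Therefore $\mu(A) = \mu(X)$, as required.

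The only ingredient that goes beyond elementary manipulation of measures is the outer regularity of finite Borel measures on metric spaces; I expect this to be the ``hard part'' only in the bookkeeping sense that one must invoke it as a known fact, or else insert the short argument sketched above. The rest --- choosing a near-optimal open cover of $X \setminus A$ and then squeezing $f(\delta)$ between $0$ and $\varepsilon$ --- is routine.
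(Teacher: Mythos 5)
Your proof is correct, and it is essentially the standard argument behind the cited \cite[Lemma~6]{BDV}, which this paper quotes without reproving: outer regularity of the finite Borel measure $\mu$ yields an open $U\supseteq X\setminus A$ with $\mu(U)$ close to $\mu(X\setminus A)=\delta$, whence $\mu(A\cap U)\leq\varepsilon$ while the hypothesis forces $\mu(A\cap U)\geq f(\delta)$, squeezing $f(\delta)$ to zero. The only point worth flagging is your last step $f(\delta)>f(\delta/2)$, which reads ``increasing'' as \emph{strictly} increasing; that is the intended reading (for merely non-decreasing $f$, e.g.\ $f\equiv 0$, the lemma is false), so this is a clarification of the hypothesis rather than a gap.
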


The next lemmas are specific to our problem. Recall that for $\q\in \ZZ^n$ and a ball $B \subset \RR^m/\ZZ^m$ we define
\begin{equation}\label{eq:defB}
 A(\q, B) = A_{n,m}(\q, B) = \set{\bx\in\I^{nm} :  \q\bx + \p \in B \text{ for some } \p \in \Z^m}
\end{equation}
and note that it has measure $\abs{B}$. Given two vectors $\q_1, \q_2 \in \Z^n$, we will use the notation $\q_1 \parallel \q_2$ to denote that $\q_1$ and $\q_2$ are parallel and, similarly, we will write $\q_1 \nparallel \q_2$ to indicate that the vectors $\q_1$ and $\q_2$ are not parallel.

\begin{lemma}[{\cite[Lemma~9 in Chapter 1]{Sprindzuk}}]\label{lem:nparallel}
  Suppose $B_1, B_2\subset \RR^m/\ZZ^m$ are balls, and $\q_1,\q_2\in\ZZ^n$ with $\q_1\nparallel\q_2$. Then
    \begin{equation*}
      \abs*{A(\q_1, B_1)\cap A(\q_2, B_2)} = \abs*{A(\q_1, B_1)}\abs*{A(\q_2, B_2)}.
  \end{equation*}
\end{lemma}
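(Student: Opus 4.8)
The statement is that two "stripe sets" $A(\q_1,B_1)$ and $A(\q_2,B_2)$ in $\I^{nm}$ associated to non-parallel integer vectors $\q_1,\q_2\in\ZZ^n$ are measure-theoretically independent. Since both sets have measure $|B_1|$ and $|B_2|$ respectively, the claim reduces to showing $|A(\q_1,B_1)\cap A(\q_2,B_2)| = |B_1|\,|B_2|$. The cleanest route is to reduce to the coordinate-wise structure of the map $\bx\mapsto(\q_1\bx,\q_2\bx)$ and then invoke Fubini.

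First I would fix notation: writing $\bx=(\bx^{(1)},\dots,\bx^{(m)})$ with each $\bx^{(j)}\in\I^n$ the $j$-th column, the condition $\q_i\bx+\p\in B_i$ decouples across the $m$ coordinates, i.e. the $j$-th coordinate of $\q_i\bx$ is $\q_i\cdot\bx^{(j)}$. So it suffices to treat the case $m=1$ and then take an $m$-fold product: $A(\q_i,B_i)$ becomes a product of $m$ copies of a one-dimensional stripe set, and if the one-dimensional independence holds, the $m$-dimensional statement follows by Fubini on $\I^{nm}=(\I^n)^m$, since the intersection also factors as a product. So the heart of the matter is: for $\q_1\nparallel\q_2$ in $\ZZ^n$ and intervals (mod $1$) $B_1,B_2\subset\RR/\ZZ$, show $|\{\bx\in\I^n : \q_1\cdot\bx\in B_1+\ZZ,\ \q_2\cdot\bx\in B_2+\ZZ\}| = |B_1|\,|B_2|$.

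For this core claim, the plan is to use the fact that the map $T:\TT^n\to\TT^2$ given by $\bx\mapsto(\q_1\cdot\bx,\q_2\cdot\bx)\bmod\ZZ^2$ is a continuous group homomorphism of tori, and because $\q_1,\q_2$ are linearly independent over $\RR$ (equivalently over $\QQ$, being integer vectors that are non-parallel) the matrix with rows $\q_1,\q_2$ has rank $2$, so $T$ is surjective. A surjective homomorphism of compact abelian groups pushes normalized Haar measure to normalized Haar measure, so $T_*(\Leb_{\TT^n}) = \Leb_{\TT^2}$. Then
\begin{align*}
  |A(\q_1,B_1)\cap A(\q_2,B_2)| &= \Leb_{\TT^n}\parens*{T^{-1}(B_1\times B_2)} \\
  &= \Leb_{\TT^2}(B_1\times B_2) = |B_1|\,|B_2|.
\end{align*}
An alternative, more hands-on argument avoiding the pushforward language: complete $\{\q_1,\q_2\}$ to a basis of $\QQ^n$ (possible since they are $\QQ$-linearly independent), clear denominators to get an integer matrix $M$ with first two rows $\q_1,\q_2$ and $\det M\neq 0$; the linear map $\bx\mapsto M\bx\bmod\ZZ^n$ is a $|\det M|$-to-one covering of $\TT^n$ by itself preserving Haar measure, and the set in question is the preimage of $(B_1\times B_2\times\TT^{n-2})$ under this map, whose measure is $|B_1||B_2|$; a final Fubini over the last $n-2$ coordinates finishes it.

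The main obstacle — though a mild one — is being careful that "non-parallel" over $\ZZ$ genuinely gives rank $2$ and hence surjectivity of $T$; this is where the hypothesis is used and it fails precisely when $\q_1\parallel\q_2$, in which case $T^{-1}(B_1\times B_2)$ can be empty (or of the wrong measure) depending on the relation between $B_1,B_2$ and the ratio of the $\q_i$. Everything else is routine measure theory: the decoupling across the $m$ coordinates, the Haar-measure pushforward (or the integer change of variables), and the concluding Fubini. I would expect the whole argument to be short once the reduction to $m=1$ and the surjectivity of $T$ are in place.
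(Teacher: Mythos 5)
The paper does not prove this lemma at all: it is quoted verbatim from Sprind\v{z}uk (Lemma~9 in Chapter~1 of his book), so there is no in-paper argument to compare against. Judged on its own, your proof is correct and complete in all essentials. The reduction to $m=1$ is legitimate because balls are taken in the maximum norm, so $B_i$ is a product of arcs $B_i^{(1)}\times\cdots\times B_i^{(m)}$ and $A_{n,m}(\q_i,B_i)=\prod_j A_{n,1}(\q_i,B_i^{(j)})$ with the factors living in disjoint blocks of coordinates (the same column decomposition the paper uses in the proof of its mixing lemma); the intersection then factors and Fubini gives the product of the one-dimensional answers. The core step is also sound: non-parallel nonzero integer vectors are $\QQ$-linearly independent, so the matrix with rows $\q_1,\q_2$ has rank $2$, the induced map $\RR^n\to\RR^2$ is onto, hence $T:\TT^n\to\TT^2$ is a surjective continuous homomorphism of compact abelian groups, and its pushforward of Haar measure is translation-invariant and therefore Haar; this yields $\abs{T^{-1}(B_1\times B_2)}=\abs{B_1}\abs{B_2}$ exactly as you say. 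Your ``hands-on'' alternative via an integer matrix $M$ with nonzero determinant and the measure-preserving covering $\bx\mapsto M\bx \bmod \ZZ^n$ is the more classical route and is essentially the style of argument in Sprind\v{z}uk; the Haar-pushforward version is a cleaner packaging of the same idea. The one point worth making explicit in a written version is that $\q_1,\q_2$ are nonzero (guaranteed in the paper's applications, where $\abs{\q}\geq 1$), since the rank-$2$ claim needs it.
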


\begin{lemma}\label{lem:invariance}
  Suppose $B_1, B_2 \subset \RR^m/\ZZ^m$ are balls, and that
  $\q_1\parallel\q_2\in\ZZ^n$ and $\br_1\parallel\br_2\in\ZZ^{n'}$ are
  such that $\abs{\q_i} = \abs{\br_i}$ and
  $\q_1\cdot\q_2 = \br_1\cdot \br_2$ (that is, the pairs either both
  point in the same direction, or both point in opposite
  directions). Then
  \begin{equation*}
\abs*{A_{n,m}(\q_1, B_1)\cap A_{n,m}(\q_2, B_2)} =\abs*{A_{n',m}(\br_1, B_1)\cap A_{n',m}(\br_2, B_2)}. 
\end{equation*}
\end{lemma}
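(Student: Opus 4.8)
The plan is to reduce the intersection measure to a purely one-dimensional computation that depends only on the quantities $\abs{\q_i}$ and the sign of $\q_1\cdot\q_2$, and then observe that this computation is manifestly the same whether we are in $\I^{nm}$ or $\I^{n'm}$. First I would handle the case $m=1$ and then lift to general $m$ coordinate-by-coordinate, since the defining condition $\q\bx+\p\in B$ decouples across the $m$ blocks of coordinates of $\bx\in\I^{nm}$ (writing $\bx=(\bx^{(1)},\dots,\bx^{(m)})$ with each $\bx^{(j)}\in\I^n$, and similarly for the ball $B=B_1^{(1)}\times\cdots$ in the max-norm sense); by Fubini the intersection measure factors as a product over $j$ of the corresponding $n=1$-forms intersection measures, so it suffices to prove the lemma when $m=1$.

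So assume $m=1$, and write $q_1\parallel q_2$ in $\ZZ^n$. Since $A(\q,B)$ depends on $\q$ only through the line it spans together with its length, and since $A(\q,B)=A(-\q,B)$ when $B$ is centrally... — more carefully, $A(\q,B)$ is the preimage of $B$ under $\bx\mapsto \q\cdot\bx \bmod 1$, a group homomorphism $\TT^n\to\TT^1$ whose image-measure pushforward of Lebesgue measure is again Lebesgue on $\TT^1$ (as $\q\neq\0$). The key point is that for $\q_1\parallel\q_2$ the pair of maps $\bx\mapsto(\q_1\cdot\bx,\q_2\cdot\bx)$ has image a one-dimensional subtorus of $\TT^2$, and the pushforward of Lebesgue measure on $\TT^n$ under this map is the Haar measure on that subtorus. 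Writing $\q_i=c_i\bv$ for a primitive $\bv$ and integers $c_i$ with $\abs{c_i}=\abs{\q_i}$ (so $\gcd$ considerations give $c_1,c_2$), the subtorus is $\set{(c_1 t, c_2 t): t\in\TT^1}$, and
\begin{equation*}
  \abs*{A(\q_1,B_1)\cap A(\q_2,B_2)} = \Leb\set{t\in\TT^1: c_1 t\in B_1 \text{ and } c_2 t\in B_2}.
\end{equation*}
This final expression depends only on $c_1,c_2$ — equivalently on $\abs{\q_i}$ together with $\sgn(c_1 c_2)=\sgn(\q_1\cdot\q_2)$ — and not on $n$ at all. Running the identical computation for $\br_1\parallel\br_2\in\ZZ^{n'}$ with $\abs{\br_i}=\abs{\q_i}$ and $\br_1\cdot\br_2=\q_1\cdot\q_2$ yields $\br_i=c_i'\bv'$ with $c_i'=c_i$ (the sign condition pins down the relative sign, and $\abs{c_i'}=\abs{\br_i}=\abs{\q_i}=\abs{c_i}$), so the two one-dimensional Lebesgue measures coincide, completing the proof.

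The main obstacle I anticipate is justifying cleanly that the pushforward of Lebesgue measure on $\TT^n$ under $\bx\mapsto(\q_1\cdot\bx,\q_2\cdot\bx)$ is exactly the Haar measure on the subtorus generated by $(c_1,c_2)$ — i.e.\ that there is no extra multiplicity or normalization factor depending on $n$ or on the primitive vector $\bv$. This is where the hypothesis that we only compare \emph{parallel} pairs is essential (for non-parallel pairs one gets independence, which is precisely Lemma~\ref{lem:nparallel}), and it is the one spot where a short argument using the structure of the kernel of the homomorphism $\bx\mapsto\q_1\cdot\bx$ (a codimension-one subtorus, on which $\q_2\cdot\bx$ reduces to a function of a single circle coordinate) needs to be spelled out. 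Everything else — the reduction to $m=1$ by Fubini and the final matching of constants via the sign hypothesis — is routine.
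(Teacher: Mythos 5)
Your overall strategy is essentially the paper's: project $\I^{nm}$ along the common direction of $\q_1,\q_2$ down to a one-dimensional computation and observe that the result no longer sees $n$. (The preliminary Fubini reduction to $m=1$ is harmless but unnecessary, since the projection $\bx\mapsto\q\bx\bmod 1$ already lands in $\I^m$; the paper works with general $m$ directly.) However, there is a concrete error in your final matching step. Writing $\q_i=c_i\bv$ with $\bv$ primitive, you assert $\abs{c_i}=\abs{\q_i}$. This is false unless $\abs{\bv}=1$: in general $\abs{\q_i}=\abs{c_i}\,\abs{\bv}$. For example, with $\q_1=(1,2)$, $\q_2=(2,4)$ in $\ZZ^2$ and $\br_1=2$, $\br_2=4$ in $\ZZ^1$, the hypotheses of the lemma hold, but $(c_1,c_2)=(1,2)$ while $(c_1',c_2')=(2,4)$, so your claim $c_i'=c_i$ fails, and the two expressions $\Leb\set{t:c_1t\in B_1,\ c_2t\in B_2}$ and $\Leb\set{t:c_1't\in B_1,\ c_2't\in B_2}$ do not literally impose the same conditions.

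The conclusion is still correct, but it needs one more ingredient that you have omitted: the quantity $\Leb\set{t\in\TT:c_1t\in B_1,\ c_2t\in B_2}$ is invariant under replacing $(c_1,c_2)$ by $(\lambda c_1,\lambda c_2)$ for any nonzero integer $\lambda$, because $t\mapsto\lambda t\bmod 1$ is measure preserving on $\TT$. Taking $\lambda=\abs{\bv}$ normalizes your pair to $(\abs{\q_1},\pm\abs{\q_2})$ (after the harmless common sign flip making $c_1>0$), and likewise $\lambda=\abs{\bv'}$ normalizes $(c_1',c_2')$ to $(\abs{\br_1},\pm\abs{\br_2})$ with the same sign by hypothesis; only after this normalization do the two one-dimensional measures visibly coincide. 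This missing step is precisely the second measure-preserving map (``multiplication by $\abs{\q}$ mod $1$'') in the paper's proof, which reduces both sides to the common canonical form $\abs*{A_{1,m}(\abs{\q_1},B_1)\cap A_{1,m}(\pm\abs{\q_2},B_2)}$. The point you flag as the ``main obstacle'' --- that the pushforward of Lebesgue measure under $\bx\mapsto\bv\cdot\bx$ is Lebesgue --- is fine and is exactly the fact the paper cites from Sprind\v{z}uk.
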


\begin{remark*}
  Throughout this work, we have allowed $\q$ to take values in
  $\ZZ^n$. As it turns out, all of our arguments work if we restrict
  $\q$ to lie in the positive orthant $\ZZ^n_{\geq 0}$. Under that restriction, the question (seen for example in
  the above lemma) of whether $\q_1$ and $\q_2$ point in the same
  direction becomes irrelevant.
\end{remark*}

\begin{proof}[Proof of Lemma \ref{lem:invariance}]
We will show that 
\begin{equation}\label{eq:projection}
\abs*{A_{n,m}(\q_1, B_1)\cap A_{n,m}(\q_2, B_2)} =\abs*{A_{1,m}(\abs{\q_1}, B_1)\cap A_{1,m}(\pm\abs{\q_2}, B_2)},
\end{equation}
where the $\pm$ is determined according to whether $\q_1\cdot\q_2$ is
positive or negative. To see this, let $\q\in\ZZ^n$ and
$k_1,k_2\in\ZZ$ be such that $\q_1 = k_1\q$ and $\q_2 = k_2\q$. By
possibly changing the sign on $\q$ we may ensure that $k_1$ is
positive.  Now $A(\q_1, B_1)\cap A(\q_2, B_2)$ is the preimage of
$A(k_1, B_1)\cap A(k_2, B_2)$ under the projection
$\x\mapsto \q\x (\bmod 1)$. This mapping is measure-preserving
(see~\cite[Lemma~8 in Chapter 1]{Sprindzuk}), therefore
\begin{equation*}
\abs*{A(\q_1, B_1)\cap A(\q_2, B_2)} =\abs*{A(k_1, B_1)\cap A(k_2, B_2)}. 
\end{equation*}
But notice that $A(\abs{\q_1}, B_1)\cap A(\sgn(k_2)\abs{\q_2}, B_2)$ is the preimage of
$A(k_1, B_1)\cap A(k_2, B_2)$ under the ``$\times \abs{\q} (\bmod 1)$''
map, which is also measure-preserving, so we have
\begin{equation*}
\abs*{A(k_1, B_1)\cap A(k_2, B_2)} = \abs*{A(\abs{\q_1}, B_1)\cap A(\sgn(k_2)\abs{\q_2}, B_2)}.
\end{equation*}
Combining this with the previous observation, we
have~(\ref{eq:projection}), where the $\pm$ is determined
  by $\sgn(k_2)$, which is itself determined by whether $\q_1$ and
  $\q_2$ point in the same or opposite direction. Since the same
argument could have been carried out with $\br_1$ and $\br_2$, we are
done.
\end{proof}

\begin{lemma}\label{lem:dilate}
  Suppose $B_1, B_2 \subset \RR^m/\ZZ^m$ are balls, and $q_1, q_2$ are
  integers with $1\leq q_1 \leq q_2$. Then, for every $\q_1,\q_2\in\ZZ^n$ with $\q_1\parallel\q_2$ and
  $\abs{\q_i}=q_i$, 
    \begin{equation*}
\abs*{A(\q_1, q_1^{-1/m}B_1)\cap A(\q_2, q_2^{-1/m}B_2)} \leq  \frac{1}{q_1} \abs*{A(\q_1, B_1)\cap A(\q_2, (q_1/q_2)^{1/m}B_2)}.
  \end{equation*}
\end{lemma}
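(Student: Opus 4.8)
The plan is to reduce everything to the case $n=1$ using Lemma~\ref{lem:invariance}, to decompose each of the four sets $A_{1,m}(q_i,\cdot)$ into its constituent cubes, and then to compare the two intersections cube by cube. For the reduction: since $\q_1\parallel\q_2$ we may assume, by the remark following Lemma~\ref{lem:invariance}, that $\q_1,\q_2$ lie in the positive orthant, so that $\q_1\cdot\q_2 = q_1q_2>0$. Applying Lemma~\ref{lem:invariance} to each side (with $\br_1 = q_1$ and $\br_2 = q_2$ in $\ZZ=\ZZ^1$), it then suffices to prove that for positive integers $1\le q_1\le q_2$ and balls $B_i = B(\mathbf{c}_i,r_i)\subset\RR^m/\ZZ^m$,
\[
  \abs*{A_{1,m}(q_1, q_1^{-1/m}B_1)\cap A_{1,m}(q_2, q_2^{-1/m}B_2)} \;\le\; \tfrac{1}{q_1}\,\abs*{A_{1,m}(q_1, B_1)\cap A_{1,m}(q_2, (q_1/q_2)^{1/m}B_2)}.
\]
I would further assume $r_1,r_2<\tfrac12$, since if some $r_i\ge\tfrac12$ the ball $B_i$ fills the torus $\RR^m/\ZZ^m$ and the asserted bound reduces to a direct comparison of the remaining measures.

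For the cube decomposition, recall that for any $q\ge1$ and any ball $B(\mathbf{c},\rho)\subset\RR^m/\ZZ^m$ one has $A_{1,m}(q, B(\mathbf{c},\rho)) = \bigcup_{\mathbf{j}} B\parens*{(\mathbf{c}+\mathbf{j})/q,\ \rho/q}$, the union being over $\mathbf{j}\in\{0,\dots,q-1\}^m$ and disjoint whenever $\rho<\tfrac12$. Feeding the four balls into this formula, two things stand out. First, the $q_1$-cubes of $A_{1,m}(q_1, q_1^{-1/m}B_1)$ are concentric with the $q_1$-cubes of $A_{1,m}(q_1, B_1)$, with radii scaled by $\lambda := q_1^{-1/m}$; and the $q_2$-cubes of $A_{1,m}(q_2, q_2^{-1/m}B_2)$ are concentric with those of $A_{1,m}(q_2, (q_1/q_2)^{1/m}B_2)$, with radii scaled by the \emph{same} factor, since $q_2^{-1/m}/(q_1/q_2)^{1/m} = q_1^{-1/m} = \lambda$. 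Second, the two cube families making up the right-hand intersection are each pairwise disjoint: for the $q_1$-family this needs $r_1<\tfrac12$, and for the $q_2$-family it needs $(q_1/q_2)^{1/m}r_2<\tfrac12$, which follows from $r_2<\tfrac12$ together with $q_1\le q_2$ --- this is the one place the hypothesis $q_1\le q_2$ is used. Consequently the right-hand measure is \emph{exactly} the sum of $\abs*{C_1\cap C_2}$ over all pairs of big cubes $C_1\subset A_{1,m}(q_1,B_1)$, $C_2\subset A_{1,m}(q_2,(q_1/q_2)^{1/m}B_2)$, whereas by subadditivity the left-hand measure is at most the sum of $\abs*{\lambda C_1\cap\lambda C_2}$ over the same index set, where $\lambda C$ denotes the concentric $\lambda$-scaling of $C$.

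The heart of the argument is then the cube-wise inequality $\abs*{\lambda C_1\cap\lambda C_2}\le\lambda^m\abs*{C_1\cap C_2}$ for concentric scalings of max-norm balls, with $0<\lambda\le1$. Since the intersection of two max-norm balls factors as a product over the $m$ coordinates, this reduces to the corresponding one-dimensional statement for arcs in $\RR/\ZZ$, which in turn follows from the elementary scalar inequality $(\lambda a - b)^+\le\lambda(a-b)^+$ valid for all $a,b\ge0$ and $\lambda\in(0,1]$ (trivial if $\lambda a\le b$; otherwise $a>b$ and $\lambda(a-b)=\lambda a-\lambda b\ge\lambda a-b$). Concretely, the overlap of two arcs in $\RR/\ZZ$ splits into a ``near'' part and a possible ``wrap-around'' part, and applying the scalar inequality to each of these handles the coordinate. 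Summing the cube-wise bound over all pairs of cubes and using $\lambda^m=q_1^{-1}$ yields the displayed inequality, hence the lemma.

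I expect the only real bookkeeping to be the verification that the ``small'' and ``big'' cube families are related by the \emph{same} scaling $\lambda=q_1^{-1/m}$ in both the $q_1$- and the $q_2$-families (so that the product over $m$ coordinates produces exactly the factor $q_1^{-1}$), together with the disjointness check that lets one evaluate the right-hand side as an exact sum rather than merely bound it. The latter is precisely where $q_1\le q_2$ is needed; the core inequality itself is entirely elementary.
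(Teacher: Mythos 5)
Your proposal is correct and is essentially the paper's own argument: both reduce to $n=1$ via Lemma~\ref{lem:invariance}, decompose each $A_{1,m}(q,\cdot)$ into its $q^m$ constituent sub-balls, and conclude from the fact that shrinking two such balls concentrically by $\lambda=q_1^{-1/m}$ (equivalently, as the paper phrases it, translating the balls of the globally contracted configuration so that their centers move farther apart) cannot increase their intersection by more than the factor $\lambda^m=q_1^{-1}$. The only differences are cosmetic: the paper carries a $\pm q_2$ to cover oppositely-directed parallel pairs rather than assuming the positive orthant, while your version spells out the per-pair scalar inequality, the wrap-around case, and the disjointness bookkeeping that the paper leaves implicit.
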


\begin{proof}
  Lemma~\ref{lem:invariance} implies that it is enough to prove this
  for $n=1$, that is, we must prove that
  \begin{equation*}
    \abs*{A(q_1, q_1^{-1/m}B_1)\cap A(\pm q_2, q_2^{-1/m}B_2)} \leq  \frac{1}{q_1} \abs*{A(q_1, B_1)\cap A(\pm q_2, (q_1/q_2)^{1/m}B_2)},
  \end{equation*}
  where the $\pm$ is determined by whether $\q_1$ and $\q_2$ point in
  the same or opposite direction. Seeing
  $A(q_1, B_1)\cap A(\pm q_2, (q_1/q_2)^{1/m} B_2)$ as a subset of
  $[0,1]^m$, we contract it by $q_1^{-1/m}$ and see that
  \begin{equation*}
  \abs*{A(q_1,B_1)\cap A(\pm q_2, (q_1/q_2)^{1/m}B_2)} = q_1\abs*{q_1^{-1/m}\parens*{A(q_1,  B_1)\cap A(\pm q_2,(q_1/q_2)^{1/m}B_2)}}. 
\end{equation*}
Notice that
\begin{equation*}
 q_1^{-1/m}\parens*{A(q_1,  B_1)\cap A(\pm q_2,(q_1/q_2)^{1/m} B_2)}
  =\underbrace{\brackets*{q_1^{-1/m}\parens*{A(q_1,B_1)}}}_C\cap \underbrace{\brackets*{q_1^{-1/m}\parens*{A(\pm q_2,(q_1/q_2)^{1/m}B_2)}}}_D.
\end{equation*}
Since $B_1$ and $B_2$ are balls in $\RR^m/\ZZ^m$, they have diameter
$\leq 1$, and so each of the square bracketed sets on the right-hand
side (labelled $C$ and $D$) is a union of essentially disjoint balls
of volume $q_1^{-1}\abs{B_1}$ and $q_2^{-1}\abs{B_2}$ respectively. We
may see $C\cap D$ as an intersection of several balls in
$(q_1^{-1/m}\RR)^m/(q_1^{-1/m}\ZZ)^m$ (a shrunken torus) such that no
point is contained in more than two of them. The sets
$A(q_1, q_1^{-1/m}B_1)$ and $A(\pm q_2, q_2^{-1/m}B_2)$ are obtained
from $C$ and $D$ by \emph{translating} the disjoint balls so that
their centers lie at their corresponding points in
$\RR^m/\ZZ^m$. Since those corresponding points can be obtained via a
scaling of the metric by $q_1^{1/m}\geq 1$, the pairwise distances
between centers has necessarily increased. Moreover, since the balls
making up $C$ and $D$ are only \emph{translated} to their new
positions (not scaled), the measures of the pairwise intersections
cannot have increased. Therefore, we have
\begin{equation*}
  \abs*{A(q_1, B_1)\cap A(\pm q_2, (q_1/q_2)^{1/m}B_2)} \geq q_1 \abs*{A(q_1, q_1^{-1/m}B_1)\cap A(\pm q_2, q_2^{-1/m}B_2)},
\end{equation*}
proving the lemma.
\end{proof}

\begin{lemma}\label{lem:mixing}
  For any $n,m \geq 1$ there exists a constant $C:=C_m>0$ such that for every open set $U\subset \I^{nm}$ the following holds: for all $\q \in \Z^n$ of sufficiently large norm ($\abs{\q}\geq Q_U$),
  \begin{equation*}
    \abs*{A(\q, B) \cap U} \geq C \abs*{A(\q,B)}\abs*{U}
  \end{equation*}
  holds for every ball $B\subset \RR^m/\ZZ^m$. (Note that the subscripts are added to the constants here to indicate their dependencies, e.g. $C_m$ indicates that $C$ depends on $m$.)
\end{lemma}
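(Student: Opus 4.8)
The plan is to reduce to the case where $U$ is a box, and then to exploit the lattice-periodicity of the ``slices'' of $A(\q,B)$ in the coordinate block where $\q$ is largest. First I reduce to the case that $U$ is an open box $R\subset\I^{nm}$: suppose the statement holds for boxes, with some constant $c_m$ and a threshold $Q_R$ depending only on $R$. An arbitrary open $U\subset\I^{nm}$ can be written as a countable union $\bigcup_i R_i$ of pairwise disjoint open boxes $R_i\subseteq U$ with $\sum_i|R_i|=|U|$ (e.g.\ the interiors of a Whitney decomposition). Choosing $N$ so that $\sum_{i\le N}|R_i|>\tfrac12|U|$ and setting $Q_U:=\max_{i\le N}Q_{R_i}<\infty$, we get, for every ball $B$ and every $\q$ with $|\q|\ge Q_U$,
\[
  |A(\q,B)\cap U|\ \ge\ \sum_{i\le N}|A(\q,B)\cap R_i|\ \ge\ c_m|B|\sum_{i\le N}|R_i|\ >\ \tfrac{c_m}{2}\,|B|\,|U|,
\]
which, since $|A(\q,B)|=|B|$, is the asserted inequality with $C_m=c_m/2$. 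The point of this step is that a naive exhaustion of $U$ by infinitely many boxes would give no threshold valid for all $B$ at once, whereas capturing a large fraction of $|U|$ by finitely many boxes does.

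Next I treat the box case. Fix $R=\prod_{j=1}^{nm}(a_j,b_j)$ and write $\bx=(\bx_1,\dots,\bx_n)$ with $\bx_i\in\TT^m$, so that $A(\q,B)=\{\bx:\sum_i q_i\bx_i\in B\pmod{\ZZ^m}\}$. Let $i_0$ be an index with $|q_{i_0}|=|\q|$, and split $R=R_{i_0}\times R'$ according to the block-$i_0$ coordinates and the rest. For fixed $\bx':=(\bx_i)_{i\ne i_0}$, membership in $A(\q,B)$ amounts to $q_{i_0}\bx_{i_0}\in\widetilde B\pmod{\ZZ^m}$, where $\widetilde B:=B-\sum_{i\ne i_0}q_i\bx_i$ is a translate of $B$; denote the resulting subset of $\TT^m$ by $S_{\widetilde B}$. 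By Fubini,
\[
  |A(\q,B)\cap R|=\int_{R'}\bigl|S_{\widetilde B(\bx')}\cap R_{i_0}\bigr|\,d\bx'.
\]
The set $S_{\widetilde B}$ is the preimage of $\widetilde B$ under multiplication by $q_{i_0}$ on $\TT^m$; hence it is invariant under translation by $\tfrac1{|\q|}\ZZ^m$ and has measure $|B|/|\q|^m$ inside each ``period box'' $\tfrac1{|\q|}[\mathbf j,\mathbf j+\mathbf 1]$, $\mathbf j\in\ZZ^m$.

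The core estimate is then a counting argument. If $\ell$ denotes the shortest side of $R_{i_0}$ and $|\q|\ge 4/\ell$, then $R_{i_0}$ contains at least $\prod_j\bigl(|\q|(b_j-a_j)-2\bigr)\ge|\q|^m|R_{i_0}|/2^m$ of the period boxes entirely; summing the contribution $|B|/|\q|^m$ from each gives $|S_{\widetilde B}\cap R_{i_0}|\ge 2^{-m}|B|\,|R_{i_0}|$ for every translate $\widetilde B$ of every ball $B$. Plugging this into the Fubini identity yields $|A(\q,B)\cap R|\ge 2^{-m}|B|\,|R|$ whenever $|\q|\ge 4/\ell$; since the shortest side of $R_{i_0}$ is at least the shortest side $\ell_R$ of $R$, the threshold $Q_R:=4/\ell_R$ works — crucially, it depends neither on which block $i_0$ realizes $|\q|$ nor on $B$. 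Combined with the first paragraph this proves the lemma with $C_m=2^{-m-1}$. I expect the only genuine subtlety to be the one flagged above, namely securing a threshold uniform in $B$ for a general open $U$; this is handled entirely by the finite-exhaustion device, and everything else rests on the elementary periodicity of $S_{\widetilde B}$.
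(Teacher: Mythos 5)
Your proof is correct and follows essentially the same strategy as the paper's: reduce $U$ to a finite union of boxes (the paper uses equal-radius balls) capturing at least half of $\abs{U}$ so that the threshold on $\abs{\q}$ is uniform in $B$, and then count how many full periods of the $\tfrac{1}{\abs{\q}}$-periodic structure of $A(\q,B)$ fit inside a fixed box. The only difference is organizational: the paper first factors $A_{n,m}(\q,B)\cap W$ as a product over the $m$ target coordinates and then runs a one-dimensional cross-section count in the direction of $\I^n$ where $\abs{\q}$ is attained, whereas you apply Fubini over the blocks $\bx_i$ with $i\neq i_0$ and count $m$-dimensional period cells of the preimage of $\widetilde{B}$ under multiplication by $q_{i_0}$ on $\TT^m$; both yield the same constant $2^{-m-1}$ and the same dependence of the threshold on the inner scale of $U$ only.
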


\begin{proof}
  First, find within $U$ a finite union of disjoint balls $V\subset U$
  such that $\abs{V}\geq \abs{U}/2$. (In principle, we could get as
  close to the measure of $U$ as we want.) We may even, and indeed
  will, assume all the balls in $V$ have the same radius, $r>0$.

  Now let $W\subset \I^{nm}$ be any ball of radius $r$. It will be
  enough to show that
  \begin{equation} \label{eq:mixingballssubcase}
    \abs*{A(\q, B) \cap W} \geq C' \abs*{A(\q,B)}\abs*{W}
  \end{equation}
  for all $\abs{\q}\geq Q_r$, where $C'>0$ is some absolute constant
  which may depend on $n$ and $m$, and $Q_r$ only depends on
  $r$. Importantly for us, $Q_r$ does not depend on
  $B$ and so, given \eqref{eq:mixingballssubcase}, one can deduce the lemma with $C = C'/2$.

  Let us write $\x \in \I^{nm}$ as $\x = (\x_1, \x_2,\cdots, \x_m)$
  where $\x_j$ are column vectors. Then for any $\q\in\ZZ^n$ we have
  that $\q\x = (\q\cdot\x_1, \dots, \q\cdot \x_m) \in \I^m$, and the
  condition that
  \begin{equation*}
    \q\x + \p\in B\qquad\textrm{for}\qquad\p=(p_1, \dots, p_m)\in\ZZ^m
  \end{equation*}
  is equivalent to $\q\cdot \x_j + p_j \in B_j$ for each $j$, where
  $B_j$ is the projection of $B$ to the $j$th component. Therefore, we
  have
  \begin{equation}\label{eq:product}
    A_{n,m}(\q,B)\cap W = \parens*{\prod_{j=1}^m A_{n,1}(\q, B_j)}\cap W =  \prod_{j=1}^m A_{n,1}(\q, B_j)\cap W_j, 
  \end{equation}
  where $W_j$ is the projection of $W$ to the copy of $\I^n$
  corresponding to the $j$th column of $\I^{nm}$.

  Suppose for the moment that $\abs{\q}$ is achieved in the first
  coordinate of $\q$. For any $\bz\in\I^{n-1}$ (representing the last
  $n-1$ coordinates), let
  \begin{equation*}
    S_\bz = \parens*{A_{n,1}(\q, B_j)\cap W_j}_\bz = A_{n,1}(\q, B_j)_\bz\cap (W_j)_\bz
  \end{equation*}
  be the cross-section through $\bz$ parallel to the first
  coordinate. Then 
  \begin{align*}
    \abs*{A_{n,1}(\q, B_j)\cap W_j} &= \int_{\I^{n-1}}\abs{S_\bz}\,d\bz \\
    &= \int_{Y_j}\abs{S_\bz}\,d\bz
  \end{align*}
  where $Y_j$ is the projection of $W_j$ to the last $n-1$
  coordinates. Meanwhile, $(W_j)_\bz$ is an interval of length $2r$
  and $A_{n,1}(\q, B_j)_\bz$ is a union of $\abs{\q}$ disjoint
  intervals of length
  $\abs{B_j}/\abs{\q} =
  \abs{B}^{1/m}/\abs{\q}$ with centers spaced
  $1/\abs{\q}$ apart, in $(\I^{nm})_{\bz} \cong \RR/\ZZ$. Therefore,
  $(W_j)_{\bz}$ fully contains at least \mbox{$2r\abs{\q} - 2$} of the
  intervals constituting $A_{n,1}(\q, B_j)_\bz$, so we have
  \begin{equation*}
   \abs{S_\bz} \geq (2r\abs{\q} - 2)\frac{\abs{B_j}}{\abs{\q}},
  \end{equation*}
  which exceeds $r\abs{B_j}$ as soon as $\abs{\q}\geq 2/r$. In that case, we have
  \begin{align*}
    \abs*{A_{n,1}(\q, B_j)\cap W_j} &= \int_{Y_j}\abs{S_\bz}\,d\bz \\
                                    &\geq r\abs{B_j}\int_{Y_j} d\bz \\
                                    &= r\abs{B_j}\abs{Y_j} \\
    &= 2^{n-1}r^n\abs{B_j}.
  \end{align*}
  Note that the argument in this paragraph did not depend on the
  supposition that $\abs{\q}$ was achieved in the first of the $n$
  coordinates.
  
  Now, by~(\ref{eq:product}) and recalling that
  $W \subset \I^{nm}$ was an arbitrary ball of radius $r$, we have
  \begin{equation*}
    \abs*{A(\q,B)\cap W} \geq 2^{nm-m}r^{nm}\prod_{j=1}^m\abs{B_j} \geq \frac{1}{2^{m}}\abs{B}\abs{W} 
  \end{equation*}
  for all $\abs{\q}\geq 2/r$. Since $\abs{A(\q, B)} = \abs{B}$, we are
  done. 
\end{proof}

\begin{lemma}[Overlap estimates]\label{lem:overlaps}
  Let $m\geq 1$. Suppose $B_1$ and $B_2$ are balls in $\RR^m/\ZZ^m$ and let $r,q \in \ZZ\setminus\set{0}$. Then
\begin{equation*}
  \abs*{A_{1,m}(r,B_1)\cap A_{1,m}(q,B_2)} \ll \abs{B_1}\abs{B_2} +  \abs{B_2}\abs{q}^{-m}\gcd(r,q)^m,
\end{equation*}
where the implicit constant depends only on $m$. In fact, we can take
the implicit constant to be $2^m$.
\end{lemma}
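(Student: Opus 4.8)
The plan is to evaluate the intersection exactly by descending through the $q^m$-fold self-covering $x\mapsto qx\pmod{1}$ of $\I^m$, which reduces the estimate to a one-variable lattice-point count. First I would remove the signs: since $A_{1,m}(-t,B)=A_{1,m}(t,-B)$ for every $t\in\ZZ\setminus\{0\}$, where $-B$ is the reflection of $B$ through the origin (again a ball of the same radius), replacing $B_1$ by $-B_1$ when $r<0$ and $B_2$ by $-B_2$ when $q<0$ changes neither side of the claimed inequality (throughout, $\gcd(r,q)$ and $\abs{q}$ are taken via absolute values). So assume $r,q\geq 1$; put $d=\gcd(r,q)$ and write $r=dr'$, $q=dq'$ with $\gcd(r',q')=1$; let $\rho_i$ be the radius of $B_i$, so that $\abs{B_i}=(2\rho_i)^m$ because every ball here is a maximum-norm ball of radius at most $1/2$.

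Next I would record the covering identity: partitioning $\I^m$ into the $q^m$ cubes $(\mathbf{k}+[0,1)^m)/q$, $\mathbf{k}\in\{0,\dots,q-1\}^m$, and changing variables $x\mapsto y:=qx-\mathbf{k}$ on each cube, for any integrable $h$ one has
\[
\int_{\I^m}h(x)\,dx=\frac{1}{q^m}\int_{\I^m}\Bigl(\,\sum_{\mathbf{k}\in\{0,\dots,q-1\}^m}h\bigl((y+\mathbf{k})/q\bigr)\Bigr)\,dy.
\]
Applying this with $h=\bone_{A_{1,m}(r,B_1)}\cdot\bone_{A_{1,m}(q,B_2)}$, noting that the points $(y+\mathbf{k})/q$ are exactly the $q^m$ solutions of $qx\equiv y\pmod{1}$ and that the factor coming from $A_{1,m}(q,B_2)$ depends only on $qx\pmod{1}=y$, this collapses to
\[
\abs{A_{1,m}(r,B_1)\cap A_{1,m}(q,B_2)}=\frac{1}{q^m}\int_{B_2}N(y)\,dy,\qquad N(y):=\#\{\mathbf{k}\in\{0,\dots,q-1\}^m:\ r(y+\mathbf{k})/q\in B_1+\ZZ^m\}.
\]

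The core step is a bound on $N(y)$ uniform in $y$. Since $r/q=r'/q'$, we have $r(y+\mathbf{k})/q\equiv ry/q+r'\mathbf{k}/q'\pmod{1}$, and $r'\mathbf{k}/q'\pmod{1}$ depends only on $\mathbf{k}\bmod q'$; as $\mathbf{k}$ runs over $\{0,\dots,q-1\}^m$ each class mod $q'$ occurs exactly $d^m$ times, and because $\gcd(r',q')=1$ the resulting values sweep out a translate of the lattice $\tfrac1{q'}\ZZ^m/\ZZ^m$, of spacing $1/q'$. A maximum-norm ball $B_1$ of radius $\rho_1\leq 1/2$ is a cube of side $2\rho_1\leq 1$, hence meets at most $(2\rho_1 q'+1)^m$ points of any such lattice; therefore $N(y)\leq d^m(2\rho_1 q'+1)^m$ for every $y$.

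Finally I would assemble the estimate. Using $q'd=q$,
\[
\abs{A_{1,m}(r,B_1)\cap A_{1,m}(q,B_2)}\leq\frac{\abs{B_2}\,d^m(2\rho_1 q'+1)^m}{q^m}=\abs{B_2}\Bigl(2\rho_1+\frac{d}{q}\Bigr)^m,
\]
and then $(2\rho_1+d/q)^m\leq 2^m\bigl((2\rho_1)^m+(d/q)^m\bigr)$ since $a+b\leq 2\max(a,b)$; substituting $(2\rho_1)^m=\abs{B_1}$ and $(d/q)^m=\gcd(r,q)^m\abs{q}^{-m}$ gives the lemma with implicit constant $2^m$. There is no genuine obstacle in this argument. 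The two points demanding care are the bookkeeping in the covering change of variables---getting the factor $q^{-m}$ and the counting function $N(y)$ exactly right---and the elementary estimate for the number of lattice points in a box, whose harmless additive $1$ is precisely what produces the second term $\abs{B_2}\abs{q}^{-m}\gcd(r,q)^m$.
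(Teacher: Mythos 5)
Your proof is correct, and it follows a genuinely different route from the paper's. The paper bounds the intersection from above by (number of possible overlap regions) $\times$ (trivial bound $\delta^m$ on the measure of each region), where the count is over pairs $(\mathbf a,\mathbf b)\in\ZZ^m\times\ZZ^m$ with $r\mathbf a-q\mathbf b$ landing in a ball of diameter $\Delta rq$; the factor $\gcd(r,q)^m$ enters as the number of representations of a given integer point in the form $r\mathbf a-q\mathbf b$, and the argument splits into the two cases $\Delta qr\geq\gcd(q,r)$ and $\Delta qr<\gcd(q,r)$, which produce the two terms of the bound separately. You instead push the computation through the degree-$q^m$ covering $x\mapsto qx\pmod 1$, obtaining the \emph{exact} identity $\abs{A(r,B_1)\cap A(q,B_2)}=q^{-m}\int_{B_2}N(y)\,dy$ before any estimation, and then the whole arithmetic content is concentrated in the single uniform count $N(y)\leq d^m(2\rho_1q'+1)^m$ of points of a lattice of spacing $1/q'$ in the cube $B_1$ (the multiplicity $d^m$ coming from the fibers of $\mathbf k\mapsto\mathbf k\bmod q'$, and the bijectivity of multiplication by $r'$ mod $q'$ being where coprimality is used). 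This avoids the case split entirely: the ``$+1$'' in the lattice-point count is what generates the second term $\abs{B_2}\abs{q}^{-m}\gcd(r,q)^m$ after the elementary inequality $(a+b)^m\leq 2^m(a^m+b^m)$, and you recover the same constant $2^m$. Your version arguably makes the source of each term more transparent and is tighter at the first step (an identity rather than an upper bound), while the paper's version is the direct $m$-dimensional analogue of the classical Sprind\v{z}uk-style overlap estimate. Both correctly exploit the asymmetry of the statement (integrating over $B_2$, counting in $B_1$).
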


\begin{proof}
Since $A(q,B) = A(-q, -B)$, it is enough to prove this lemma for $r,q\geq 1$.

  Suppose the radii of $B_1$ and $B_2$ are
  $\psi_1$ and $\psi_2$, respectively. Let
  \[\delta = 2\min\left\{\frac{\psi_1}{r}, \frac{\psi_2}{q}\right\} \qquad \text{and} \qquad \Delta = 2\max\left\{\frac{\psi_1}{r}, \frac{\psi_2}{q}\right\}.\] 
  Then
\begin{equation*}
  \abs*{A(r,B_1)\cap A(q,B_2)} \leq \delta^m N
\end{equation*}
where
\begin{equation*}
N = \#\set*{(\mathbf a,\mathbf b)\in \ZZ^m\times\ZZ^m : \abs{\mathbf a} \leq q, \abs{\mathbf b}\leq r,  r\mathbf a - q\mathbf b \in B},
\end{equation*}
and $B$ is a ball of diameter $\Delta rq$ whose center depends on
$B_1, B_2, r,q$, but we do not need to specify it here. Essentially,
$N$ gives us an upper bound on the number of regions of intersection
we can possibly see between $A(r,B_1)$ and $A(q,B_2)$, and $\delta^m$
is a trivial upper bound for the measure of one of these regions of
intersection.

If $\Delta qr \geq \gcd(q,r)$ then $B$ can contain at most
\[\left(\frac{\Delta qr}{\gcd(q,r)} + 1\right)^m\] 
integer points of the form $r\mathbf a - q\mathbf b$.
Furthermore, an integer point can
be realized in the form $r\mathbf a-q \mathbf b$ in at most
$\gcd(q,r)^m$ different ways, for if
\begin{equation*}
  r\mathbf a-q \mathbf b = r\mathbf a'-q \mathbf b', 
\end{equation*}
then this implies
\begin{equation*}
\frac{\mathbf a - \mathbf a'}{q}= \frac{\mathbf b- \mathbf b'}{r},
\end{equation*}
and there are $\gcd(q,r)^m$ rational points of this form in
$\I^m$. So, in this case, we can bound
\begin{align*}
  \abs*{A(r,B_1)\cap A(q,B_2)} &\leq \left(\frac{2\Delta q r}{\gcd(q,r)}\right)^m\gcd(q,r)^m\delta^m \\
                               &= 2^m \Delta^m\delta^m q^mr^m \\
                               &= 8^m \psi_1^m\psi_2^m \\
                               &= 2^m \abs{B_1}\abs{B_2}.
\end{align*}

On the other hand, if $\Delta qr < \gcd(q,r)$, then $B$ contains at
most one integer point of the form $r \mathbf a - q\mathbf b$ which (if it exists) can be realized in that form
in at most $\gcd(q,r)^m$ different ways. Therefore, in this case,
$N \leq \gcd(q,r)^m$ and we have
\begin{equation*}
  \abs*{A(r,B_1)\cap A(q,B_2)} \leq \delta^m \gcd(q,r)^m \leq 2^m\frac{\psi_{2}^m}{q^m}\gcd(q,r)^m = \abs{B_2}q^{-m}\gcd(q,r)^m. 
\end{equation*}

Combining the two possible cases, $\Delta qr < \gcd(q,r)$ and
$\Delta qr \geq \gcd(q,r)$, the proof of the lemma is complete.
\end{proof}

\section{Proof of Theorem~\ref{thm:inheritance}}
\label{sec:proof-theor-refthm}

The following definition is the way of quantifying ``weak
quasi-independence on average'' that we described in
Section~\ref{sec:an-indep-inher}. 

\begin{definition}\label{def:weakind}
Given a sequence of balls $\Psi:=(B_q)_{q \in \N}$ in $\R^m / \Z^m$, we will say that the sets
  $\parens*{A_{n,m}(\q, B_{\abs{\q}})}_{\q\in\ZZ^n}$ are $w$-\emph{weakly
  quasi-independent on average} ($w$-QIA, for short) if for $w\geq 0$ we have
  \begin{equation}\label{eq:weakind}
    \limsup_{Q\to\infty} \parens*{\sum_{\abs{\q}=1}^Q \abs{B_{\abs{\q}}}}^2\parens*{\sum_{1\leq \abs{\q_1}\leq\abs{\q_2}\leq Q} \parens*{\frac{\gcd(\abs{\q_1},\abs{\q_2})}{\abs{\q_1}}}^w\abs*{A\parens*{\q_1,B_{\abs{\q_1}}}\cap A\parens*{\q_2,\parens*{\frac{\abs{\q_1}}{\abs{\q_2}}}^{\frac{w}{m}}B_{\abs{\q_2}}}}}^{-1} > 0.
  \end{equation}
  For tidiness, we use the notation
    \begin{equation*}
    \Pi(\q_1,\q_2, \Psi, w) =\Pi_{n,m}(\q_1,\q_2, \Psi, w):=\abs*{A\parens*{\q_1,B_{\abs{\q_1}}}\cap A\parens*{\q_2,\parens*{\frac{\abs{\q_1}}{\abs{\q_2}}}^{\frac{w}{m}}B_{\abs{\q_2}}}}
  \end{equation*}
  when $\abs{\q_1} \leq \abs{\q_2}$, and for integers $q,r \geq 1$, we write
  \begin{equation*}
    \Gamma(q,r) = \frac{\gcd(q,r)}{\min(q,r)}.
  \end{equation*}
  This way, the expression~(\ref{eq:weakind}) becomes
  \begin{equation}\label{eq:weakindnotation}
    \limsup_{Q\to\infty} \parens*{\sum_{\abs{\q}=1}^Q \abs{B_{\abs{\q}}}}^2\parens*{\sum_{1\leq \abs{\q_1}\leq\abs{\q_2}\leq Q} \Gamma(\abs{\q_1}, \abs{\q_2})^w \Pi(\q_1,\q_2, \Psi, w)}^{-1} > 0.
  \end{equation}
  Note that $w$-QIA implies $w'$-QIA whenever $w \leq w'$ and that
  $0$-QIA coincides with QIA.
\end{definition}

\begin{theorem}[Strong Independence Inheritance]\label{thm:stronginheritance}
  Let $n,m\geq 1$  and let $w\geq 0$. Suppose
  $(B_q)_{q\in\N}$ is a sequence of balls in $\RR^m/\ZZ^m$. If the the
  sets
  \begin{equation*}
  \parens*{A_{n,m}(\q, B_{\abs{\q}})}_{\q\in\ZZ^n}
\end{equation*}
are $w$-QIA, then the sets
\begin{equation*}
\parens*{A_{n+k,m}(\q,
  \abs{\q}^{-k/m}B_{\abs{\q}})}_{\q\in\ZZ^{n+k}}
\end{equation*}
are $\max(w-k, 0)$-QIA for every integer $k\geq 0$.
\end{theorem}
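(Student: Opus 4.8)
The plan is to reduce to the single step $k=1$ and then read off the extra independence from Lemma~\ref{lem:dilate}. The case $k=0$ is vacuous; and since $\abs{\q}^{-(k+1)/m}B_{\abs{\q}}=\abs{\q}^{-1/m}\widetilde B_{\abs{\q}}$ with $\widetilde B_q:=q^{-k/m}B_q$, and $\max(\max(w-k,0)-1,0)=\max(w-(k+1),0)$, the $k=1$ case applied with base sequence $(\widetilde B_q)_q$ and dimension $n+k$ in place of $n$ advances the induction. For $k=1$, write $B'_q:=q^{-1/m}B_q$, $w':=\max(w-1,0)$, $\Psi':=(B'_q)_q$; by the remark after Lemma~\ref{lem:invariance} we place all vectors in the non-negative orthant, so ``parallel'' means ``positive scalar multiple''. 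Let $N(Q),D(Q)$ be the numerator and denominator in \eqref{eq:weakindnotation} for $(A_{n,m}(\q,B_{\abs{\q}}))_{\q\in\ZZ^n}$, and $N'(Q),D'(Q)$ those for $(A_{n+1,m}(\q,B'_{\abs{\q}}))_{\q\in\ZZ^{n+1}}$; the hypothesis is $L:=\liminf_Q D(Q)/N(Q)^2<\infty$. Since $\#\{\q\in\ZZ^{n+1}_{\ge 0}:\abs{\q}=q\}\asymp_n q\cdot\#\{\q\in\ZZ^{n}_{\ge 0}:\abs{\q}=q\}$ and $\abs{B'_q}=q^{-1}\abs{B_q}$, the numerators match: $N'(Q)\asymp_n N(Q)$.

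Next I split $D'(Q)=D'_{\nparallel}(Q)+D'_{\parallel}(Q)$. For non-parallel pairs, Lemma~\ref{lem:nparallel} gives $\Pi_{n+1,m}(\q_1,\q_2,\Psi',w')=\abs{B'_{\abs{\q_1}}}(\abs{\q_1}/\abs{\q_2})^{w'}\abs{B'_{\abs{\q_2}}}\le\abs{B'_{\abs{\q_1}}}\abs{B'_{\abs{\q_2}}}$, and since $\Gamma(\cdot,\cdot)^{w'}\le 1$ this sums to $D'_{\nparallel}(Q)\le N'(Q)^2$. For a parallel pair with $q_i:=\abs{\q_i}$, $q_1\le q_2$, $g:=\gcd(q_1,q_2)$, Lemma~\ref{lem:dilate} (taking $B_1=B_{q_1}$, $B_2=(q_1/q_2)^{w'/m}B_{q_2}$) together with the inclusion $(q_1/q_2)^{(1+w')/m}B_{q_2}\subseteq(q_1/q_2)^{w/m}B_{q_2}$ (valid since $1+w'=\max(1,w)\ge w$) gives
\[
  \Pi_{n+1,m}(\q_1,\q_2,\Psi',w')\ \le\ \tfrac1{q_1}\bigl|A_{n+1,m}(\q_1,B_{q_1})\cap A_{n+1,m}(\q_2,(q_1/q_2)^{w/m}B_{q_2})\bigr|\ =\ \tfrac1{q_1}\,\pi(q_1,q_2),
\]
where, by Lemma~\ref{lem:invariance}, $\pi(q_1,q_2):=\bigl|A_{1,m}(q_1,B_{q_1})\cap A_{1,m}(q_2,(q_1/q_2)^{w/m}B_{q_2})\bigr|$ equals $\Pi_{n,m}(\br_1,\br_2,\Psi,w)$ for any parallel $\br_1,\br_2\in\ZZ^n_{\ge 0}$ with $\abs{\br_i}=q_i$. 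A direct computation gives $\Gamma(q_1,q_2)^{w'}q_1^{-1}\le\Gamma(q_1,q_2)^{w}/g$, so each parallel summand of $D'(Q)$ is at most $g^{-1}\Gamma(q_1,q_2)^w\pi(q_1,q_2)$.

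Grouping the parallel sum by norm pairs, let $P_\ell(q_1,q_2)=\sum_{d\mid g}\pi_\ell(d)$ be the number of parallel pairs in $\ZZ^\ell_{\ge 0}$ with those norms, where $\pi_\ell(d)$ counts primitive vectors of sup-norm $d$. An elementary count gives $P_{n+1}(q_1,q_2)\ll_n g\,P_n(q_1,q_2)$ — from $\sum_{d\mid g}\pi_{n+1}(d)\ll_n g^n$ and $\sum_{d\mid g}\pi_n(d)\gg_n g^{n-1}$ (trivial for $n=1$; the identity $\sum_{d\mid g}\varphi(d)=g$ for $n=2$; a sieve bound $\pi_n(g)\gg_n g^{n-1}$ for $n\ge 3$) — whence
\[
  D'_{\parallel}(Q)\ \le\ \sum_{1\le q_1\le q_2\le Q}P_{n+1}(q_1,q_2)\,\frac{\Gamma(q_1,q_2)^w}{g}\,\pi(q_1,q_2)\ \ll_n\ \sum_{1\le q_1\le q_2\le Q}P_n(q_1,q_2)\,\Gamma(q_1,q_2)^w\,\pi(q_1,q_2)\ \le\ D(Q).
\]
Hence $D'(Q)\ll_n D(Q)+N'(Q)^2$; along a sequence $Q_j$ with $D(Q_j)/N(Q_j)^2\to L$ this yields $N'(Q_j)^2/D'(Q_j)\gg_n(L+1)^{-1}>0$, so $(A_{n+1,m}(\q,B'_{\abs{\q}}))_{\q\in\ZZ^{n+1}}$ is $w'$-QIA, completing the induction.

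The delicate point is the counting estimate $P_{n+1}(q_1,q_2)\ll_n g\,P_n(q_1,q_2)$: it is not a pointwise comparison of primitive-lattice-point counts, and in the case $n=2$ one must use $\sum_{d\mid g}\varphi(d)=g$ rather than the crude $\varphi(d)\le d$, since a stray $\log\log$ factor there would sink the final $\limsup$ argument. Everything else is bookkeeping on top of Lemmas~\ref{lem:nparallel}--\ref{lem:dilate}.
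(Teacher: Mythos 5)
Your proof is correct and follows essentially the same route as the paper's: reduce to $k=1$, match the numerators via the lattice-point count on spheres, split the denominator into non-parallel pairs (handled by Lemma~\ref{lem:nparallel}) and parallel pairs (handled by Lemma~\ref{lem:invariance}, Lemma~\ref{lem:dilate}, and the observation that the number of parallel pairs of given norms in $\ZZ^{d}$ grows by a factor $\asymp\gcd(q_1,q_2)$ per added dimension, which absorbs the drop from $\Gamma^{w}$ to $\Gamma^{w-1}$). Your treatment is if anything slightly more explicit than the paper's on the primitive-vector counting and on the case $w<1$, which the paper instead dispatches by noting that $w$-QIA implies $1$-QIA.
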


\begin{proof}[Proof of Theorem~\ref{thm:stronginheritance}]
  Denote $\Psi_0:=(B_q)_{q\in\N}$, the given sequence of balls in
  $\RR^m/\ZZ^m$, and for integer $k\geq 0$ denote
  $\Psi_k:=(q^{-k/m}B_q)_{q\in\NN}$.

  It suffices to prove the theorem with $k=1$. Furthermore, since
  $0$-QIA implies $1$-QIA, we may take $w\geq 1$. For
  $\q_1, \q_2\in \ZZ^{n+1}$ with $\abs{\q_i}=q_i$ and $q_1\leq q_2$,
  we are concerned with overlaps of the form
  \begin{equation}\label{eq:whatwecareabout}
    \Pi_{n+1,m}(\q_1,\q_2, \Psi_1, w-1):=\abs*{A_{n+1,m}(\q_1, q_1^{-1/m}B_{q_1})\cap A_{n+1,m}\left(\q_2, \parens*{\frac{q_1}{q_2}}^{\frac{w-1}{m}}q_2^{-1/m}B_{q_2}\right)}.
  \end{equation}
  If $\q_1\nparallel \q_2$ then 
  \begin{align*}
    \Pi_{n+1,m}(\q_1, \q_2, \Psi_1, w-1) &= \abs*{A_{n+1,m}(\q_1, q_1^{-1/m}B_{q_1})} \abs*{A_{n+1,m}\left(\q_2, \parens*{\frac{q_1}{q_2}}^{\frac{w-1}{m}}q_2^{-1/m}B_{q_2}\right)} \nonumber \\
    &\leq \abs*{A_{n+1,m}(\q_1, q_1^{-1/m}B_{q_1})} \abs*{A_{n+1,m}(\q_2, q_2^{-1/m}B_{q_2})}
  \end{align*}
  by Lemma~\ref{lem:nparallel}. In particular, we have
  \begin{equation}\label{eq:parallel}
    \sum_{1\leq q_1 \leq q_2 \leq Q}\sum_{\substack{\q_1\nparallel \q_2\in\ZZ^{n+1} \\ |\q_i|=q_i}} \Gamma(q_1,q_2)^{w-1}\Pi_{n+1,m}(\q_1,\q_2, \Psi_1, w-1) \leq \parens*{\sum_{\substack{\q \in\ZZ^{n+1} \\ \abs{\q}\leq Q}} \abs{A_{n+1,m}(\q, \abs{\q}^{-1/m}B_{\abs{\q}})}}^2.
  \end{equation}
  On the other hand, for $\q_1\parallel\q_2$, we have as a consequence
  of Lemma~\ref{lem:invariance} that
  \begin{multline*}
    \sum_{1\leq q_1 \leq q_2 \leq Q}\sum_{\substack{\q_1\parallel \q_2\in\ZZ^{n+1} \\ |\q_i|=q_i}} \Gamma(q_1,q_2)^{w-1}\Pi_{n+1,m}(\q_1,\q_2, \Psi_1, w-1)\\
    \ll  \sum_{1\leq q_1 \leq q_2 \leq Q}\sum_{\substack{\br_1\parallel \br_2\in\ZZ^n \\ \abs{\br_i}=q_i}} \gcd(q_1,q_2) \Gamma(q_1,q_2)^{w-1}\Pi_{n,m}(\br_1,\br_2, \Psi_1, w-1),
  \end{multline*}
  noting that in $\ZZ^d$ the number of parallel pairs with norms
  $q_1, q_2$ is comparable to $\gcd(q_1,q_2)^{d-1}$. Then by
  Lemma~\ref{lem:dilate} we see that we may follow this with
  \begin{equation}\label{eq:forlater}
    \ll \sum_{1\leq q_1 \leq q_2 \leq Q}\sum_{\substack{\br_1\parallel \br_2\in\ZZ^n \\ \abs{\br_i}=q_i}} \Gamma(q_1,q_2)^w \Pi_{n,m}(\br_1,\br_2,\Psi_0, w).
  \end{equation}
  By assumption, there are infinitely many values of $Q \in \N$ for which this last sum
  is
  \begin{equation*}
\ll \parens*{\sum_{\substack{\br\in \ZZ^n \\\abs{\br}\leq Q}}\abs{A(\br,B_{\abs{\br}})}}^2.
  \end{equation*}
Since
    \begin{equation*}
    \sum_{\substack{\q \in\ZZ^{n+1} \\ \abs{\q}\leq Q}} \abs{A_{n+1,m}(\q, \abs{\q}^{-1/m}B_{\abs{\q}})} \asymp \sum_{q=1}^Q q^{n-1} \abs{B_q} \asymp \sum_{\substack{\br\in\ZZ^n \\ \abs{\br}\leq Q}} \abs{A_{n,m}(\br, B_{\abs{\br}})},
  \end{equation*}
  we have shown that there are infinitely many values of $Q \in \N$ for which
  \begin{align*}
\sum_{1\leq q_1 \leq q_2 \leq Q}\sum_{\substack{\q_1\parallel \q_2\in\ZZ^{n+1} \\ |\q_i|=q_i}} \Gamma(q_1,q_2)^{w-1}\Pi_{n+1,m}(\q_1,\q_2, \Psi_1, w-1) \ll \parens*{\sum_{\substack{\q \in\ZZ^{n+1} \\ \abs{\q}\leq Q}} \abs{A_{n+1,m}(\q, \abs{\q}^{-1/m}B_{\abs{\q}})}}^2.
  \end{align*}
  Combining this with~(\ref{eq:parallel}), we see that there is some
  $C>1$ and infinitely many values of $Q \in \N$ for which
  \begin{equation*}
    \sum_{1\leq \abs{\q_1} \leq \abs{\q_2} \leq Q}\Gamma(q_1,q_2)^{w-1} \Pi_{n+1,m}(\q_1,\q_2,\Psi_1, w-1) \leq C \parens*{\sum_{\abs{\q}\leq Q} \abs*{A_{n+1,m}(\q, \abs{\q}^{-1/m}B_{\abs{\q}})}}^2.
  \end{equation*}
  That is, the sets
  $\parens*{A_{n+1,m}(\q, \abs{\q}^{-1/m}B_{\abs{\q}})}_{\q\in\ZZ^{n+1}}$ are
  $(w-1)$-QIA as the theorem claims. The general statement follows by induction.
\end{proof}

\begin{proposition}[Full measure]\label{prop:fullmeasure}
  Let $n,m\geq 1$ and suppose $(B_q)_{q\in\N}$ is a sequence of balls in
  $\RR^m/\ZZ^m$. If the the sets
  $\parens*{A(\q, B_{\abs{\q}})}_{\q\in\ZZ^n}$ are $0$-QIA and the sum
  $\sum_{\q\in\ZZ^n} \abs{B_{\abs{\q}}}$ diverges, then the set
  \begin{equation*}
    \limsup_{\abs{\q}\to\infty}A_{n,m}(\q, B_{\abs{\q}})
  \end{equation*}
  has full Lebesgue measure.
\end{proposition}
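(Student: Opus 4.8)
The plan is to deduce the full-measure statement from a \emph{local} measure estimate and then invoke the density lemma, Lemma~\ref{lem:lebesguedensity}. Write $\cW:=\limsup_{\abs{\q}\to\infty}A_{n,m}(\q,B_{\abs{\q}})$. It suffices to produce an increasing function $f\colon\RR_{\geq 0}\to\RR_{\geq 0}$ with $f(x)\to 0$ as $x\to 0$ such that $\abs{\cW\cap U}\geq f(\abs{U})$ for every open $U\subset\I^{nm}$; Lemma~\ref{lem:lebesguedensity} then forces $\abs{\cW}=\abs{\I^{nm}}=1$.

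So fix an open $U\subset\I^{nm}$ and let $Q_U$ be the threshold supplied by the mixing lemma, Lemma~\ref{lem:mixing}. Since a limsup is insensitive to finitely many sets, $\cW\cap U=\limsup_{\abs{\q}\geq Q_U}\parens*{A_{n,m}(\q,B_{\abs{\q}})\cap U}$. I would apply the divergence Borel--Cantelli lemma, Lemma~\ref{lem:borelcantelli}, inside the finite measure space $(U,\Leb|_U)$ to the family $\parens*{A_{n,m}(\q,B_{\abs{\q}})\cap U}_{\abs{\q}\geq Q_U}$. Its hypothesis is met: by Lemma~\ref{lem:mixing} we have $\abs{A_{n,m}(\q,B_{\abs{\q}})\cap U}\geq C_m\abs{B_{\abs{\q}}}\abs{U}$ for all $\abs{\q}\geq Q_U$, and $\sum_{\q\in\ZZ^n}\abs{B_{\abs{\q}}}$ diverges by assumption.

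In the resulting quotient, the plan is to bound the numerator from below by the same mixing estimate and to bound the denominator from above by first discarding the intersection with $U$, using $\abs{A(\q_1,\cdot)\cap A(\q_2,\cdot)\cap U}\leq\abs{A(\q_1,\cdot)\cap A(\q_2,\cdot)}$, and then enlarging the summation range back to all $1\leq\abs{\q_i}\leq Q$. Because $\sum_{\abs{\q}=1}^{Q}\abs{B_{\abs{\q}}}\to\infty$, restricting the numerator sum to $\abs{\q}\geq Q_U$ changes it by only an additive constant and hence does not affect the limsup of the quotient; what remains is exactly $C_m^2\abs{U}^2$ times the $0$-QIA limsup of the original family (up to the harmless factor from symmetrizing the double sum), which is a positive constant $c_0>0$. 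This yields $\abs{\cW\cap U}\geq C_m^2 c_0\abs{U}^2$, so taking $f(x)=C_m^2 c_0 x^2$ finishes the argument.

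The bookkeeping with the sums is routine, resting only on the facts that a divergent sum is unchanged up to a bounded amount by removing finitely many terms and that restricting a sum of nonnegative terms can only decrease it. The point that genuinely requires care — and the reason Lemma~\ref{lem:mixing} is indispensable — is that $0$-QIA controls only a global average, so upgrading ``positive measure'' to ``full measure'' forces one to re-run the Borel--Cantelli argument inside an arbitrary open $U$; this is legitimate precisely because the threshold $Q_U$ in Lemma~\ref{lem:mixing} does not depend on the balls $B_q$, so a single tail of indices simultaneously supplies the mixing lower bound on $\abs{A(\q,B_{\abs{\q}})\cap U}$ and leaves the overlap sum of the $0$-QIA hypothesis untouched.
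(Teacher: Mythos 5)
Your proposal is correct and follows essentially the same route as the paper's proof: both combine the mixing estimate of Lemma~\ref{lem:mixing} with the $0$-QIA bound to control the second-moment quotient for the sets $A(\q,B_{\abs{\q}})\cap U$, apply Lemma~\ref{lem:borelcantelli} to obtain $\abs{\cW\cap U}\gg \abs{U}^2$, and conclude with Lemma~\ref{lem:lebesguedensity}. The extra bookkeeping you spell out (working in $(U,\Leb|_U)$, discarding the finitely many indices with $\abs{\q}<Q_U$, and the symmetrization factor) is exactly what the paper leaves implicit.
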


\begin{proof}
  Let $U\subset \I^{nm}$ be an open set. Then for infinitely many $Q \in \N$
  we have
    \begin{align*}
      \sum_{|\q_1|, |\q_2| \leq Q} |A(\q_1, B_{\abs{\q_1}})\cap A(\q_2,B_{\abs{\q_2}})\cap U| &\leq C_1 \parens*{\sum_{\abs{\q}\leq Q} \abs*{A(\q, B_{\abs{\q}})}}^2\\                                                                                                                                 &\overset{\textrm{Lemma~\ref{lem:mixing}}}{\leq} \frac{C_2}{\abs{U}^2}\parens*{\sum_{\abs{\q}\leq Q} \abs*{A(\q, B_{\abs{\q}})\cap U}}^2,
  \end{align*}
  where $C_1>0$ is a universal constant coming from the 0-QIA
  assumption and $C_2>0$ is a constant which may depend on $m$. Now,
  as a consequence of Lemma~\ref{lem:borelcantelli}, we see that
    \begin{equation*}
    \abs*{\limsup_{\abs{\q}\to\infty}A_{n,m}(\q, B_{\abs{\q}})\cap U} \geq \frac{\abs{U}^2}{C_2}. 
  \end{equation*}
  The theorem now follows by Lemma~\ref{lem:lebesguedensity}.
\end{proof}

\begin{proof}[Proof of Theorem~\ref{thm:inheritance}]
  The independence statement follows directly from
  Theorem~\ref{thm:stronginheritance}. The full measure statement
  follows from Proposition~\ref{prop:fullmeasure}.
\end{proof}

\section{Proof of Theorem~\ref{thm:generalmarquee}}\label{sec:proof-theor-refthm:m}

\begin{proposition}[Base case for Theorem~\ref{thm:generalmarquee}]\label{prop:basecase}
Suppose $(B_q)_{q\in\NN}$ is a sequence of balls in $\RR^m/\ZZ^m$
  such that $\sum \abs{B_q}$ diverges.
  \begin{itemize}
  \item If $m\geq 3$, then the sets $(A_{1,m}(q, B_q))_{q\in\NN}$ are
    quasi-independent on average.
  \item If $m\geq 2$, then the weaker estimate
    \begin{equation*}
    \sum_{1\leq r \leq q\leq Q} \frac{\gcd(q,r)}{r}\abs*{A_{1,m}(r,B_r)\cap A_{1,m}(q,(r/q)^{1/m}B_q)} \ll \parens*{\sum_{q=1}^Q \abs{A_{1,m}(q,B_q)}}^2,
  \end{equation*}
  holds.
  \item If $m\geq 1$, then the still weaker estimate
    \begin{equation*}
    \sum_{1\leq r \leq q\leq Q} \parens*{\frac{\gcd(q,r)}{r}}^2\abs*{A_{1,m}(r,B_r)\cap A_{1,m}(q,(r/q)^{2/m}B_q)} \ll \parens*{\sum_{q=1}^Q \abs{A_{1,m}(q,B_q)}}^2,
  \end{equation*}
  holds.
\end{itemize}
\end{proposition}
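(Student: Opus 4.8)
The plan is to handle all three bullet points at once, since each one is (the defining inequality of) $w$-weak quasi-independence on average from Definition~\ref{def:weakind} for the sequence $(A_{1,m}(q,B_q))_{q\in\N}$, with $w=0$, $w=1$, and $w=2$ respectively. Write $E_Q:=\sum_{q=1}^{Q}\abs{B_q}$; the divergence hypothesis gives $E_Q\to\infty$, so $E_Q\ll E_Q^2$ for all sufficiently large $Q$. I would prove the single estimate: for every integer $w\ge 0$ with $w+m\ge 3$,
\begin{equation*}
  \sum_{1\le r\le q\le Q}\parens*{\frac{\gcd(q,r)}{r}}^{w}\abs*{A_{1,m}(r,B_r)\cap A_{1,m}(q,(r/q)^{w/m}B_q)}\ll E_Q^2 .
\end{equation*}
The three bullets are the cases $w=0$ (which forces $m\ge 3$), $w=1$ (forcing $m\ge 2$), and $w=2$ (forcing $m\ge 1$); for $w=0$ this estimate, together with $E_Q\to\infty$ and $\abs{A_{1,m}(q,B_q)}=\abs{B_q}$, is exactly quasi-independence on average in the sense of~\eqref{eq:3}.

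To prove the displayed estimate I would first peel off the diagonal $r=q$, where the weight equals $1$ and the intersection is just $A_{1,m}(q,B_q)$, of measure $\abs{B_q}$; the diagonal thus contributes $\sum_{q\le Q}\abs{B_q}=E_Q\ll E_Q^2$. For the off-diagonal part $r<q$, I would apply the overlap estimate, Lemma~\ref{lem:overlaps}, with the larger index $q$ --- and the shrunken ball $(r/q)^{w/m}B_q$, of measure $(r/q)^{w}\abs{B_q}$ --- occupying the slot that carries the $\abs{q}^{-m}$ factor:
\begin{equation*}
  \abs*{A_{1,m}(r,B_r)\cap A_{1,m}(q,(r/q)^{w/m}B_q)}\ll\abs{B_r}\,(r/q)^{w}\abs{B_q}\;+\;(r/q)^{w}\abs{B_q}\,q^{-m}\gcd(r,q)^{m}.
\end{equation*}
In the first term the weight $(\gcd(q,r)/r)^{w}$ and the dilation factor $(r/q)^{w}$ collapse to $(\gcd(q,r)/q)^{w}\le 1$, so those terms sum to at most $\sum_{1\le r<q\le Q}\abs{B_r}\abs{B_q}\le E_Q^2$. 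In the second term, the same collapse together with the $q^{-m}$ leaves $\gcd(q,r)^{w+m}\,q^{-(w+m)}\abs{B_q}$.

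It then remains to bound $\sum_{r<q}\gcd(q,r)^{w+m}$, for which I would use the elementary estimate $\sum_{r=1}^{q}\gcd(q,r)^{s}=\sum_{d\mid q}d^{s}\,\varphi(q/d)\le q\sum_{d\mid q}d^{s-1}\ll_{s}q\cdot q^{s-1}=q^{s}$, valid whenever $s>2$: indeed $\sum_{d\mid q}d^{s-1}=\sum_{d\mid q}(q/d)^{s-1}=q^{s-1}\sum_{d\mid q}d^{-(s-1)}\le q^{s-1}\sum_{k=1}^{\infty}k^{-(s-1)}$, and the last series converges precisely because $s-1>1$. Applying this with $s=w+m\ge 3$ gives $\sum_{r<q}\gcd(q,r)^{w+m}\ll q^{w+m}$, so the second-term contribution $\sum_{1\le r<q\le Q}\gcd(q,r)^{w+m}q^{-(w+m)}\abs{B_q}$ is $\ll\sum_{q\le Q}\abs{B_q}=E_Q\ll E_Q^2$. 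Summing the diagonal and the two off-diagonal contributions proves the displayed estimate, hence all three bullet points.

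The step demanding genuine care --- and what I expect to be the crux of the bookkeeping --- is getting the exponents right: one must apply Lemma~\ref{lem:overlaps} with the \emph{larger} index $q$ in the slot carrying $\abs{q}^{-m}$ (so that the denominator produced is $q^{w+m}$, not $r^{w+m}$), and then check that the weight $(\gcd(q,r)/r)^{w}$, the ball dilation $(r/q)^{w}$, and that $q^{-m}$ combine to leave \emph{precisely} the exponent $w+m$ on $\gcd(q,r)$. Granting this, the hypothesis $w+m\ge 3$ is exactly what makes $\sum_{d\mid q}d^{w+m-1}$ be $O(q^{w+m-1})$ rather than merely $O(q^{w+m-1}\log q)$ (which is all one gets when $w+m=2$), and this is precisely the lower bound on $m$ recorded in each of the three bullets. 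Apart from Lemma~\ref{lem:overlaps} and this $\gcd$-sum, no further input is needed.
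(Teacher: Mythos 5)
Your proof is correct and follows essentially the same route as the paper's: apply Lemma~\ref{lem:overlaps} to each intersection, observe that the weight $(\gcd(q,r)/r)^w$ and the dilation factor $(r/q)^w$ combine with $q^{-m}$ to leave $\gcd(q,r)^{w+m}q^{-(w+m)}$, and then bound $\sum_{r\le q}\gcd(q,r)^{s}\ll q^{s}$ for $s\ge 3$ via the divisor-sum rearrangement. The only difference is presentational: you run the three bullet points as a single computation parametrized by $w$ with $s=w+m$, whereas the paper writes out the cases $w=0,1,2$ separately.
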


\begin{remark*}
  In the language introduced in Definition~\ref{def:weakind}, this
  proposition says that the sets are
  $\max\set{(3-m),0}$-QIA.
\end{remark*}

\begin{proof}[Proof of Proposition~\ref{prop:basecase}]
  First, suppose $m\geq 3$. By Lemma~\ref{lem:overlaps}, 
  \begin{equation*}
    \sum_{q,r = 1}^Q \abs*{A(r,B_r) \cap A(q,B_q)} \ll \sum_{q,r=1}^Q \left(\abs{B_r}\abs{B_q} + \abs{B_q}q^{-m}\gcd(q,r)^m \right). 
  \end{equation*}
  The second sum on the right-hand side is
  \begin{align*}
    \sum_{q,r = 1}^Q \abs{B_q}q^{-m}\gcd(q,r)^m  &\ll \sum_{q=1}^Q\sum_{r=1}^q\abs{B_q}q^{-m}\gcd(q,r)^m \\
    &= \sum_{q=1}^Qq^{-m}\abs{B_q} \underbrace{\sum_{r=1}^q\gcd(q,r)^m}.
  \end{align*}
  The indicated $\gcd$ sum is $\ll q^m$ if $m\geq 3$. After all,
  \begin{align*}
    \sum_{r=1}^q\gcd(q,r)^m &\leq \sum_{d\mid q}d^m(q/d) \\
                            &=\sum_{d\mid q}(q/d)^m d \\
    &\leq q^m \underbrace{\sum_{i=1}^qi^{-m+1}},
  \end{align*}
  and the indicated sum is absolutely bounded for $m\geq
  3$. Therefore, we have
  \begin{align*}
    \sum_{q,r = 1}^Q \abs*{A(r,B_r) \cap A(q,B_q)} &\ll \sum_{q,r=1}^Q \abs{B_r}\abs{B_q} + \sum_{q=1}^Q \abs{B_q} \\
    &\ll \parens*{\sum_{q=1}^Q \abs{B_q}}^2,
  \end{align*}
  since the series in parentheses diverges.

  Now let us suppose only that $m\geq 2$. Then a nearly identical
  calculation can be done. By Lemma \ref{lem:overlaps}, we have
  \begin{equation*}
    \sum_{1\leq r\leq q\leq Q} \frac{\gcd(q,r)}{r}\abs*{A(r,B_r)\cap A(q,(r/q)^{1/m}B_q)} \ll \sum_{1\leq r\leq q\leq Q} \left(\abs{B_r}\abs{B_q} + \frac{r}{q}\abs{B_q}q^{-m}r^{-1}\gcd(q,r)^{m+1}\right). 
  \end{equation*}
  This time the second sum on the right-hand side is
  \begin{equation*}
    \sum_{1\leq r\leq q\leq Q} \abs{B_q}q^{-m-1}\gcd(q,r)^{m+1}\ll \sum_{q=1}^Qq^{-m-1}\abs{B_q} \underbrace{\sum_{r=1}^q\gcd(q,r)^{m+1}}
  \end{equation*}
  and the $\gcd$ sum is $\ll q^{m+1}$ since we have assumed $m\geq
  2$. Therefore, we have
  \begin{align*}
    \sum_{1\leq r\leq q\leq Q} \frac{\gcd(q,r)}{r}\abs*{A(r,B_r)\cap A(q,(r/q)^{1/m}B_q)} &\ll \sum_{q,r=1}^Q \abs{B_q}\abs{B_r} + \sum_{q=1}^Q \abs{B_q} \\
    &\ll \parens*{\sum_{q=1}^Q \abs{B_q}}^2,
  \end{align*}
  as in the previous paragraph.

  Finally, suppose only that $m\geq 1$. Again by Lemma
  \ref{lem:overlaps}, we have
  \begin{equation*}
    \sum_{1\leq r\leq q\leq Q} \parens*{\frac{\gcd(q,r)}{r}}^2\abs*{A(r,B_r)\cap A(q,(r/q)^{2/m}B_q)} \ll \sum_{1\leq r\leq q\leq Q} \left(\abs{B_r}\abs{B_q} + \parens*{\frac{r}{q}}^2\abs{B_q}q^{-m}r^{-2}\gcd(q,r)^{m+2}\right). 
  \end{equation*}
  This time the second sum on the right-hand side is 
  \begin{equation*}
    \sum_{1\leq r\leq q\leq Q} \abs{B_q}q^{-m-2}\gcd(q,r)^{m+2}\ll \sum_{q=1}^Qq^{-m-2}\abs{B_q} \underbrace{\sum_{r=1}^q\gcd(q,r)^{m+2}}
  \end{equation*}
  and the $\gcd$ sum is $\ll q^{m+2}$ since we have assumed $m\geq 1$. Therefore, we have 
  \begin{align*}
    \sum_{1\leq r\leq q\leq Q} \parens*{\frac{\gcd(q,r)}{r}}^2\abs*{A(r,B_r)\cap A(q,(r/q)^{2/m}B_q)} &\ll \sum_{q,r=1}^Q \abs{B_q}\abs{B_r} + \sum_{q=1}^Q \abs{B_q} \\
    &\ll \parens*{\sum_{q=1}^Q \abs{B_q}}^2,
  \end{align*}
  as before. This finishes the proof.
\end{proof}

\begin{proof}[Proof of Theorem~\ref{thm:generalmarquee}] 
  Let $nm>2$. The convergence part is easily disposed by an
  application of the First Borel--Cantelli lemma, so let us focus on
  the divergence part.

  First, notice that we lose no generality by choosing some $c>0$ and
  assuming that
  \begin{equation}\label{eq:wlog}
    (\forall q\in \NN)\qquad q^{n-1}\abs{B_q} \leq c.
  \end{equation}
  After all, for any $q$ where it does not hold, we can shrink the
  ball until $q^{n-1}\abs{B_q}= c$ and work with the possibly-shrunk
  sequence of balls instead. The divergence condition will still hold
  for the smaller balls. Let us therefore assume that~(\ref{eq:wlog})
  holds with a fixed $c < 1$.

  Now we may obtain a new sequence of balls
  $\widehat{B}_q = q^{(n-1)/m}B_q \subset
  \RR^m/\ZZ^m$. Notice that the sum
  $\sum \abs{\widehat{B}_q} = \sum q^{n-1}\abs{B_q}$
  diverges by assumption, so Proposition~\ref{prop:basecase}
  guarantees that the sets
  \begin{equation*}
    A_{1,m}(q, \widehat{B}_q) = A_{1,m} (q, q^{(n-1)/m} B_q)
  \end{equation*}
  are $\max\set{(3-m),0}$-QIA, in the sense of
  Definition~\ref{def:weakind}. Therefore, by
  Theorem~\ref{thm:stronginheritance} the sets
  \begin{equation*}
A_{n,m}(\q, \abs{\q}^{-(n-1)/m}\widehat{B}_{\abs{\q}})=    A_{n,m}(\q, B_{\abs{\q}}) 
  \end{equation*}
  inherit $\max\set{(3-m)-(n-1), 0}$-QIA, which, since
  $4 - (m+n) \leq 0$, is $0$-QIA. Now, by
  Proposition~\ref{prop:fullmeasure},
  \begin{equation*}
\limsup_{\abs{\q}\to\infty}A_{n,m}(\q, B_{\abs{\q}}) 
  \end{equation*}
  has full measure, and this proves the theorem.
\end{proof}

\section{Proof of Theorem~\ref{thm:remaining}} \label{sec:remaining}

The following theorem is a more general version of
Theorem~\ref{thm:remaining} that does not require the balls to be
concentric.

\begin{theorem}\label{thm:generalextradiv}
  Suppose $nm = 2$ and $\eps>0$. For any sequence of balls
  $\Psi :=(B_q)_{q \in \N}\subset\RR^m/\ZZ^m$, we have
  \begin{equation*}
    \abs*{\calA_{n,m}(\Psi)} = 1 \qquad\textrm{if}\qquad \sum_{q=1}^\infty q^{n-1}\parens*{\frac{\varphi(q)}{q}}^{1+\eps}\abs{B_q} =\infty.
  \end{equation*}
\end{theorem}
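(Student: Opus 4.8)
The plan is to run the machinery of Section~\ref{sec:proof-theor-refthm:m} as in the proof of Theorem~\ref{thm:generalmarquee}, but feeding it an \emph{extra-divergence} strengthening of Proposition~\ref{prop:basecase}.

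\textbf{Step 1: reduction via inheritance.} The condition $nm=2$ allows only $(n,m)=(1,2)$ and $(n,m)=(2,1)$, and (shrinking offending balls, exactly as in the proof of Theorem~\ref{thm:generalmarquee}) we may assume $q^{n-1}\abs{B_q}\le c<1$ for all $q$. When $(n,m)=(1,2)$ it suffices to show $\parens*{A_{1,2}(q,B_q)}_{q\in\NN}$ is $0$-QIA: Proposition~\ref{prop:fullmeasure} then gives full measure for $\limsup_q A_{1,2}(q,B_q)=\calA_{1,2}(\Psi)$, its divergence hypothesis holding because $\abs{B_q}\ge(\varphi(q)/q)^{1+\eps}\abs{B_q}$. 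When $(n,m)=(2,1)$, set $\widehat B_q:=qB_q$, a legitimate ball since $\abs{\widehat B_q}=q\abs{B_q}\le c<1$; here it suffices to show $\parens*{A_{1,1}(q,\widehat B_q)}_{q\in\NN}$ is $1$-QIA, for then Theorem~\ref{thm:stronginheritance} with $k=1$ gives $0$-QIA for $\parens*{A_{2,1}(\q,\abs{\q}^{-1}\widehat B_{\abs{\q}})}_{\q\in\ZZ^2}=\parens*{A_{2,1}(\q,B_{\abs{\q}})}_{\q\in\ZZ^2}$, and Proposition~\ref{prop:fullmeasure} applies again, since $\sum_{\q\in\ZZ^2}\abs{B_{\abs{\q}}}\asymp\sum_q\abs{\widehat B_q}\ge\sum_q q(\varphi(q)/q)^{1+\eps}\abs{B_q}=\infty$. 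Everything thus reduces to two base-case quasi-independence estimates for one-row sets.

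\textbf{Step 2: the extra-divergence base case.} One wants the following analogue of Proposition~\ref{prop:basecase}: if $\sum_q(\varphi(q)/q)^{1+\eps}\abs{B_q}=\infty$, then $\parens*{A_{1,m}(q,B_q)}_q$ is $\max\set{2-m,0}$-QIA, i.e.\ $1$-QIA for $m=1$ and $0$-QIA for $m\ge 2$ --- exactly the inputs needed above. Summing the crude overlap estimate of Lemma~\ref{lem:overlaps} leaves an arithmetic loss comparable to $q/\varphi(q)$, that is, one power of the quantity $\varphi(q)/q$ which the divergence hypothesis provides to spare. Following the ``extra divergence'' approach to Duffin--Schaeffer \cite{AistleitnerDS,AistleitneretalExtraDivergence,BHHVextraii,HPVextra}, one replaces $A_{1,m}(q,B_q)$ by the sub-union $A^{*}_{1,m}(q,B_q)$ of those components of $\prod_{i=1}^{m}A_{1,1}(q,B_q^{(i)})$ whose first-coordinate translation index $p_1$ is coprime to $q$; then $\abs{A^{*}_{1,m}(q,B_q)}=(\varphi(q)/q)\abs{B_q}$ and the truncated measure sum still diverges. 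Distinct-denominator components of the reduced sets cannot coincide, so the $\gcd$-term of Lemma~\ref{lem:overlaps} is upgraded, in the reduced coordinate, to a Pollington--Vaughan/Erd\H{o}s-type overlap bound; summing this against the $w$-weights of Definition~\ref{def:weakind} and splitting the denominators dyadically according to the size of $\varphi(q)/q$, the loss is bounded outright on ranges where $\varphi(q)/q$ is of fixed order and is otherwise dominated using $\sum_q(\varphi(q)/q)^{1+\eps}\abs{B_q}=\infty$. (Reducing a single coordinate --- hence spending only one power of $\varphi(q)/q$ --- is what yields the exponent $1+\eps$ rather than the exponent $2$ of \cite[Theorem~1.8]{Yu}, which amounts to reducing all $m$ coordinates.) Lemmas~\ref{lem:borelcantelli}, \ref{lem:mixing}, and \ref{lem:lebesguedensity} all adapt from a single ball to a finite union of balls, so the argument of Proposition~\ref{prop:fullmeasure} then gives full measure for $\limsup_q A^{*}_{1,m}(q,B_q)\subseteq\calA_{1,m}(\Psi)$ in the $(1,2)$ case; in the $(2,1)$ case one instead repackages the same overlap bound as a $w$-QIA estimate in the sense of Definition~\ref{def:weakind}, enlarged to allow such reduced sets, so that Theorem~\ref{thm:stronginheritance} carries it into the $(2,1)$ case as in Step~1.

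\textbf{Main obstacle.} Step~1 is bookkeeping; the work is all in Step~2. The delicate points are: proving the Pollington--Vaughan-type overlap estimates in the \emph{inhomogeneous, non-concentric} setting, where ``reduced'' must be read as a congruence condition on the translation index $\p$ rather than as genuine coprimality of numerator and denominator; performing the $\eps$-absorption carefully enough to reach the exponent $1+\eps$, for which the one-coordinate reduction and the product structure of $A_{1,m}(q,B_q)$ are essential; and the structural matter of fitting the reduced sets into the quasi-independence framework of Section~\ref{sec:proof-theor-refthm:m}, which is phrased for sequences of balls --- or, equivalently, of reusing only the \emph{proofs} of Proposition~\ref{prop:fullmeasure} and Theorem~\ref{thm:stronginheritance} rather than their statements.
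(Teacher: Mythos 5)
Your Step 1 is fine as bookkeeping, but the proof stands or falls on Step 2, and there is a genuine gap there. You assert an ``extra-divergence base case'': that $\sum_q(\varphi(q)/q)^{1+\eps}\abs{B_q}=\infty$ forces the sets $(A_{1,m}(q,B_q))_q$ to be $\max\{2-m,0\}$-QIA. This is stronger than anything the paper establishes, and your proposed mechanism for it --- passing to ``reduced'' sub-unions $A^{*}_{1,m}(q,B_q)$ and invoking a Pollington--Vaughan/Erd\H{o}s-type overlap estimate in the reduced coordinate --- is precisely the ingredient that is \emph{not} available. In the inhomogeneous, non-concentric setting there is no known analogue of the Pollington--Vaughan overlap bound for such reduced sets; indeed the introduction of the paper points out that even a weak inhomogeneous Duffin--Schaeffer statement (which is essentially what quasi-independence of your reduced sets amounts to when $m=1$) remains open, and would be needed only for the $\eps=0$ case. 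You flag this as a ``delicate point,'' but it is not delicate, it is missing: without that overlap estimate Step 2 does not close, and with it you would have proved considerably more than the theorem asks.

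The paper's actual route avoids reduced fractions entirely. It keeps the full sets, applies the crude overlap bound of Lemma~\ref{lem:overlaps}, and accepts the resulting loss: after summing the $\gcd$ term via $\sum_{r\le q}\gcd(r,q)^2\ll q^3/\varphi(q)$, one is left with an error term of the shape $\sum_{\abs{\q}\le Q}\abs{A(\q,B_{\abs{\q}})}\,\abs{\q}/\varphi(\abs{\q})$, which in general is \emph{not} dominated by the square of the measure sum --- so full-sequence $0$-QIA is never claimed. Instead, following Yu, the denominators are split into blocks $D_\ell=\{q: 2^\ell\le q/\varphi(q)<2^{\ell+1}\}$. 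If some block carries a divergent measure sum one does get $0$-QIA restricted to that block and concludes by Proposition~\ref{prop:fullmeasure}; otherwise one applies the Chung--Erd\H{o}s inequality (Lemma~\ref{lem:erdoschung}) block by block inside an arbitrary open set $U$, where the error term costs a factor of order $2^{\ell(2+\eps)}/2^{2\ell(1+\eps)}=2^{2+\eps}2^{-\eps\ell}$ against $\Sigma_{\ell,Q}$, and the divergence of $\sum_\ell\Sigma_\ell$ supplies infinitely many blocks on which this is harmless. That is where the exponent $1+\eps$ is actually spent; your proposal instead locates the $\eps$-absorption inside an unproved overlap estimate. To repair your argument, replace Step 2 by this block decomposition: Lemmas~\ref{lem:mixing} and~\ref{lem:lebesguedensity} then give full measure without ever asserting quasi-independence on average for the whole sequence.
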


\begin{remark*}
  The above theorem only has two cases: $(1,2)$ and $(2,1)$. The
  corresponding divergence conditions are
  \begin{equation*}
    \sum_{q=1}^\infty \parens*{\frac{\varphi(q)}{q}}^{1+\eps}\abs{B_q} =\infty\qquad\textrm{and}\qquad\sum_{q=1}^\infty \parens*{\frac{\varphi(q)}{q}}^{1+\eps} q \abs{B_q} =\infty.
  \end{equation*}
\end{remark*}

\begin{remark*}
Following the arguments outlined in Section \ref{sec:Hausdorff}, one could obtain a Hausdorff measure analogue of Theorem \ref{thm:generalextradiv} akin to statements seen in Section \ref{sec:Hausdorff}. We leave the details to the reader.
\end{remark*}

\begin{proof}[Proof of Theorem~\ref{thm:generalextradiv} for
  $(n,m) = (2,1)$] Suppose $(B_q)_{q \in \N}$ is a sequence of balls
  such that for some $\eps>0$ the series
  \[\sum_{q=1}^{\infty}{\left(\frac{\varphi(q)}{q}\right)^{1+\varepsilon}q|B_q|} \asymp\sum_{\abs{\q}=1}^\infty \parens*{\frac{\varphi(\abs{\q})}{\abs{\q}}}^{1+\eps} \abs{A(\q, B_{\abs{\q}})}=\infty.\]
  By Lemma~\ref{lem:invariance}, we have
    \begin{align}
    \sum_{\substack{1\leq \abs{\q_1} \leq \abs{\q_2}\leq Q \\ \q_1\parallel \q_2}} \abs*{A(\q_1, B_{\abs{\q_1}})\cap A(\q_2, B_{\abs{\q_2}})} &= \sum_{\substack{1\leq \abs{\q_1} \leq \abs{\q_2}\leq Q \\ \q_1\parallel \q_2}} \abs*{A_{1,1}(\abs*{\q_1}, B_{\abs{\q_1}})\cap A_{1,1}(\pm \abs*{\q_2}, B_{\abs{\q_2}})}  \nonumber \\
    &\ll \sum_{1\leq q_1\leq q_2\leq Q} \gcd(q_1,q_2)\abs*{A(q_1, B_{q_1})\cap A(q_2, B_{q_2})}, \label{eq:4}
  \end{align}
  and by Lemma~\ref{lem:dilate}, we  have
    \begin{equation}\label{eq:5}
\abs*{A(q_1, B_{q_1})\cap A(q_2, B_{q_2})} \leq \frac{1}{q_1}\abs*{A(q_1, q_1 B_{q_1})\cap A(q_2, (q_1/q_2) q_2 B_{q_2})}.
\end{equation}
Combining~(\ref{eq:4}) and (\ref{eq:5}) brings us to
  \begin{equation*}
    \sum_{\substack{1\leq \abs{\q_1} \leq \abs{\q_2}\leq Q \\ \q_1\parallel \q_2}} \abs*{A(\q_1, B_{\abs{\q_1}})\cap A(\q_2, B_{\abs{\q_2}})} \ll \sum_{1\leq q_1\leq q_2\leq Q} \frac{\gcd(q_1,q_2)}{q_1}\abs*{A(q_1, q_1 B_{q_1})\cap A(q_2, (q_1/q_2)q_2B_{q_2})},
  \end{equation*}
  and, by Lemma~\ref{lem:overlaps}, we have
  \begin{equation*}
    \abs*{A(q_1, q_1 B_{q_1})\cap A(q_2, (q_1/q_2)q_2B_{q_2})} \ll q_1^2\abs{B_{q_1}}\abs{B_{q_2}} + q_1\abs{B_{q_2}}q_2^{-1}\gcd(q_1,q_2).
  \end{equation*}
  Hence,
\begin{align*}
    \sum_{1\leq q_1\leq q_2\leq Q} \frac{\gcd(q_1,q_2)}{q_1}&\abs*{A(q_1, q_1 B_{q_1})\cap A(q_2, (q_1/q_2)q_2B_{q_2})} 
\\
    &\ll \parens*{\sum_{q=1}^{Q}q\abs{B_q}}^2 +  \sum_{1 \leq q_1\leq q_2 \leq Q} \frac{\gcd(q_1,q_2)^2}{q_2}\abs{B_{q_2}} \\
                                                            &= \parens*{\sum_{q=1}^Q q\abs{B_q}}^2 +  \sum_{q_2=1}^Q\abs{B_{q_2}}q_2^{-1} \sum_{q_1=1}^{q_2} \gcd(q_1,q_2)^2.
\end{align*}
  Now, 
  \begin{equation}\label{eq:now}
    \sum_{r=1}^{q} \gcd(r,q)^2 = \sum_{d\mid q}d^2\varphi\left(\frac{q}{d}\right) 
                               = \sum_{d\mid q}\parens*{\frac{q}{d}}^2 \varphi(d) 
                               = q^2 \sum_{d\mid q}\frac{\varphi(d)}{d^2} 
                               \leq q^2 \sum_{d\mid q}\frac{1}{d}
=  q \sum_{d\mid q}d 
    \ll \frac{q^3}{\varphi(q)},
  \end{equation}
  by~\cite[Theorem~329]{HardyWright}. Therefore, we have 
  \begin{align}
    \sum_{\substack{1\leq \abs{\q_1} \leq \abs{\q_2}\leq Q \\ \q_1\parallel \q_2}} \abs*{A(\q_1, B_{\abs{\q_1}})\cap A(\q_2, B_{\abs{\q_2}})} &\ll \parens*{\sum_{q=1}^Q q\abs{B_q}}^2 +  \sum_{q=1}^Q\abs{B_{q}} \frac{q^2}{\varphi(q)}\nonumber \\
                                                                                                                                              &\ll \parens*{\sum_{\abs{\q}=1}^Q \abs{A(\q, B_{\abs{\q}})}}^2 +  \sum_{\abs{\q}=1}^Q\abs{A(\q, B_{\abs{\q}})} \frac{\abs{\q}}{\varphi(\abs{\q})},\label{eq:1}
  \end{align}
  an estimate we can safely extend to the nonparallel pairs
  $\q_1\nparallel \q_2$, since they are genuinely pairwise independent.
  
  Let $U\subset \I^{2}$ be an open set. Then by a combination
  of~(\ref{eq:1}) and Lemma~\ref{lem:mixing} we have
  \begin{align}
    \sum_{1\leq \abs{\q_1} \leq \abs{\q_2}\leq Q} &\abs*{A(\q_1, B_{\abs{\q_1}})\cap A(\q_2, B_{\abs{\q_2}})\cap U}                           \ll \parens*{\sum_{\abs{\q}=1}^Q \abs{A(\q, B_{\abs{\q}})}}^2 +  \sum_{\abs{\q}=1}^Q\abs{A(\q, B_{\abs{\q}})} \frac{\abs{\q}}{\varphi(\abs{\q})}\nonumber\\
                                                  &\ll \frac{1}{\abs{U}^2}\parens*{\sum_{\abs{\q}=1}^Q \abs{A(\q, B_{\abs{\q}})\cap U}}^2 +  \frac{1}{\abs{U}}\sum_{\abs{\q}=1}^Q\abs{A(\q, B_{\abs{\q}})\cap U} \frac{\abs{\q}}{\varphi(\abs{\q})}\nonumber\\
                                                  &\ll \frac{1}{\abs{U}^2}\brackets*{\parens*{\sum_{\abs{\q}=1}^Q \abs{A(\q, B_{\abs{\q}})\cap U}}^2 +  \sum_{\abs{\q}=1}^Q\abs{A(\q, B_{\abs{\q}})\cap U} \frac{\abs{\q}}{\varphi(\abs{\q})}}.\label{eq:2}
  \end{align}
  The implicit constant does not depend on $U$.

  Following a strategy from the proof of~\cite[Theorem 1.8]{Yu}, we let
  \begin{equation*}
    D_\ell = \set*{q\in \NN : 2^\ell \leq \frac{q}{\varphi(q)} < 2^{\ell+1}}.
  \end{equation*}
  If there exists $\ell\geq 0$ for which
  \begin{equation*}
    \sum_{\abs{\q}\in D_\ell} \parens*{\frac{\varphi(\abs{\q})}{\abs{\q}}}^{1+\eps} \abs{A(\q, B_{\abs{\q}})} = \infty,
  \end{equation*}
  then we can restrict our attention to $D_\ell$. In this case,
  estimate~(\ref{eq:1}) would immediately lead to 0-QIA, and we would then be done by
  Proposition~\ref{prop:fullmeasure}. So assume that there is no such
  $\ell$, and put
  \begin{equation*}
    \Sigma_\ell := \sum_{\abs{\q}\in D_\ell} \parens*{\frac{\varphi(\abs{\q})}{\abs{\q}}}^{1+\eps} \abs{A(\q, B_{\abs{\q}})\cap U} \quad\textrm{and}\quad \Sigma_{\ell,Q} := \sum_{\substack{\abs{\q}\in D_\ell \\ 1 \leq \abs{\q}\leq Q}} \parens*{\frac{\varphi(\abs{\q})}{\abs{\q}}}^{1+\eps} \abs{A(\q, B_{\abs{\q}})\cap U}.
  \end{equation*}
  Notice that $\Sigma_\ell < \infty$ for every $\ell\geq 0$, by
  assumption, and that the $\Sigma_\ell$'s form a divergent series, by
  Lemma~\ref{lem:mixing}. Then, by Lemma~\ref{lem:erdoschung}, we have
  \begin{align*}
    \abs*{\bigcup_{\substack{\abs{\q}\in D_\ell \\ \abs{\q}\leq Q}} A(\q, B_{\abs{\q}})\cap U} &\geq \frac{\parens*{\sum_{\substack{\abs{\q}\in D_\ell \\ \abs{\q}\leq Q}} \abs*{A(\q, B_{\abs{\q}})\cap U}}^2}{\sum_{\substack{1 \leq \abs{\br}, \abs{\q}\leq Q \\ \abs{\br}, \abs{\q}\in D_\ell}}\abs*{A(\q, B_{\abs{\q}})\cap A(\br, B_{\abs{\br}})\cap U}}\\
                                                                                               &\gg \frac{\parens*{\sum_{\substack{\abs{\q}\in D_\ell \\ \abs{\q}\leq Q}} \abs*{A(\q, B_{\abs{\q}})\cap U}}^2}{\sum_{\substack{1\leq \abs{\br}\leq \abs{\q}\leq Q \\ \abs{\br}, \abs{\q}\in D_\ell}} \abs*{A(\q, B_{\abs{\q}})\cap A(\br, B_{\abs{\br}})\cap U}}
  \end{align*}
  for every $\ell$ and $Q$ for which the union on the left-hand side
  has positive measure. This is guaranteed to be the case for
  infinitely many $\ell$, since the measure sum
  diverges. Then, for $Q\geq Q_\ell$ where $Q_\ell$ is
  sufficiently large that the estimates in~(\ref{eq:2}) take effect,
  we have
  \begin{align}
    \abs*{\bigcup_{\substack{\abs{\q}\in D_\ell \\ \abs{\q}\leq Q}} A(\q, B_{\abs{\q}})\cap U} &\gg\abs{U}^2 \parens*{\frac{\parens*{\sum_{\substack{\abs{\q}\in D_\ell \\ \abs{\q}\leq Q}} \abs*{A(\q, B_{\abs{\q}})\cap U}}^2}{\parens*{\sum_{{\substack{\abs{\q}\in D_\ell \\ 1 \leq \abs{\q}\leq Q}}} \abs{A(\q, B_{\abs{\q}})\cap U}}^2 +  \sum_{{\substack{\abs{\q}\in D_\ell \\ 1 \leq \abs{\q}\leq Q}}}\abs{A(\q, B_{\abs{\q}})\cap U} \frac{\abs{\q}}{\varphi(\abs{\q})}}}\nonumber\\
                                                                                                               &= \abs{U}^2 \parens*{1 +  \displaystyle{\frac{\sum_{{\substack{\abs{\q}\in D_\ell \\ 1 \leq \abs{\q}\leq Q}}}\abs{A(\q, B_{\abs{\q}})\cap U} \frac{\abs{\q}}{\varphi(\abs{\q})}}{\parens*{\sum_{{\substack{\abs{\q}\in D_\ell \\ 1 \leq \abs{\q}\leq Q}}} \abs{A(\q, B_{\abs{\q}})\cap U}}^2}}}^{-1} \label{eq:fraction}
  \end{align}
  Note that
  \begin{align*}
    \displaystyle{\frac{\sum_{{\substack{\abs{\q}\in D_\ell \\ 1 \leq \abs{\q}\leq Q}}}\abs{A(\q, B_{\abs{\q}})\cap U} \frac{\abs{\q}}{\varphi(\abs{\q})}}{\parens*{\sum_{{\substack{\abs{\q}\in D_\ell \\ 1 \leq \abs{\q}\leq Q}}} \abs{A(\q, B_{\abs{\q}})\cap U}}^2}} &= \displaystyle{\frac{\sum_{{\substack{\abs{\q}\in D_\ell \\ 1 \leq \abs{\q}\leq Q}}}\abs{A(\q, B_{\abs{\q}})\cap U} \parens*{\frac{\varphi(\abs{\q})}{\abs{\q}}}^{1 + \eps}\parens*{\frac{\abs{\q}}{\varphi(\abs{\q})}}^{2 + \eps}}{\parens*{\sum_{{\substack{\abs{\q}\in D_\ell \\ 1 \leq \abs{\q}\leq Q}}} \abs{A(\q, B_{\abs{\q}})\cap U}\parens*{\frac{\varphi(\abs{\q})}{\abs{\q}}}^{1 + \eps}\parens*{\frac{\abs{\q}}{\varphi(\abs{\q})}}^{1 + \eps}}^2}}\\
    &\leq \frac{(2^{\ell + 1})^{2 + \eps}}{2^{2\ell(1 + \eps)}\Sigma_{\ell,Q}}.
  \end{align*}
  Putting this into~(\ref{eq:fraction}), we find that
  \begin{align*}
    \abs*{\bigcup_{\substack{\abs{\q}\in D_\ell \\ \abs{\q}\leq Q}} A(\q, B_{\abs{\q}})\cap U} &\gg \abs{U}^2 \parens*{\frac{1}{1 +  \frac{2^{2 +\eps}}{2^{\eps\ell}\Sigma_{\ell,Q}}}}.
  \end{align*}
  Now, the fact that the $\Sigma_\ell$ form a divergent series implies
  that there are $\ell$ and corresponding $Q$ for which
  $2^{\eps\ell}\Sigma_{\ell,Q}$ is arbitrarily large. In particular,
  there are infinitely many $\ell \in \N$ (and corresponding
  $Q \in \N$) for which the above string of inequalities gives
  \begin{equation*}
    \underbrace{\abs*{\bigcup_{\substack{\abs{\q}\in D_\ell \\ \abs{\q}\leq Q}} A(\q, B_{\abs{\q}})\cap U}}_{C_\ell} \geq \frac{\abs{U}^2}{2C},
  \end{equation*}
  where $C$ is the implicit constant in the above estimates. Since the
  sets $C_\ell$ all have measure at least $\abs{U}^2/2C$, their
  associated limsup set must have at least that
  measure. Furthermore, since 
  \begin{equation*}
    \limsup_{\ell\to\infty} C_\ell \subset \limsup_{\abs{\q}\to\infty}A(\q, B_{\abs{\q}})\cap U, 
  \end{equation*}
  this implies that
    \begin{equation*}
    \abs*{\limsup_{\abs{\q}\to\infty}A(\q, B_{\abs{\q}})\cap U} \geq \frac{\abs{U}^2}{2C}. 
  \end{equation*}
  The theorem now follows by Lemma~\ref{lem:lebesguedensity}.
\end{proof}

\begin{proof}[Proof of Theorem~\ref{thm:generalextradiv} for $(n,m) = (1,2)$]  Suppose $(B_q)_{q \in \N}$ is a sequence of balls such that for some
  $\eps>0$ the series
  \[\sum_{q=1}^{\infty}{\left(\frac{\varphi(q)}{q}\right)^{1+\varepsilon}|B_q|} \asymp\sum_{\abs{\q}=1}^\infty \parens*{\frac{\varphi(\abs{\q})}{\abs{\q}}}^{1+\eps} \abs{A(\q, B_{\abs{\q}})}=\infty.\] 
  Lemma~\ref{lem:overlaps} gives
\begin{align*}
  \sum_{1\leq r \leq q\leq Q} \abs*{A_{1,2}(r,B_r)\cap A_{1,2}(q,B_q)} &\ll \sum_{1\leq r \leq q\leq Q} \parens*{\abs{B_r}\abs{B_q} +  \abs{B_q}q^{-2}\gcd(r,q)^2} \\
                                                                       &\ll \parens*{\sum_{1\leq q\leq Q} \abs{B_q}}^2 + \sum_{1\leq q\leq Q} \abs{B_q}q^{-2}\sum_{r=1}^q\gcd(r,q)^2 \\
                                                                       &\overset{~(\ref{eq:now})}{\ll} \parens*{\sum_{1\leq q\leq Q} \abs{B_q}}^2 + \sum_{1\leq q\leq Q} \abs{B_q}\frac{q}{\varphi(q)}\\
                                                                       &\ll \parens*{\sum_{q=1}^Q \abs{A(q, B_q)}}^2 +  \sum_{q=1}^Q\abs{A(q, B_q)} \frac{q}{\varphi(q)}.
\end{align*}
Now the rest of the proof follows the proof for the $(2,1)$ case
verbatim, starting at~(\ref{eq:1}) and replacing every instance of
$\q$ with $q$.
\end{proof}

\paragraph{Acknowledgements.} We thank Victor Beresnevich and Sanju
Velani for several helpful comments and for their continued
mathematical support. We would also like to thank Yeni and Stuart for
their patience and support.

\bibliographystyle{plain}


\end{document}